\documentclass[a4paper,12pt,reqno]{amsart}
\usepackage{amssymb,amsmath,hyperref}
\oddsidemargin = 0mm \evensidemargin = 0mm \textwidth = 165mm

\title[On new tenth-order like identities]{On Ramanujan's lost notebook and new tenth-order like identities for second-, sixth-, and eighth-order mock theta functions}

\author{Eric T. Mortenson}
\address{Department of Mathematics and Computer Science, Saint Petersburg State University, Saint Petersburg, 199034, Russia}
\email{etmortenson@gmail.com}

\renewcommand\theta{\vartheta}

\newtheorem{theorem}{Theorem}
\newtheorem{lemma}[theorem]{Lemma}
\newtheorem{corollary}[theorem]{Corollary}
\newtheorem{proposition}[theorem]{Proposition}
\theoremstyle{definition}

\numberwithin{theorem}{section} 
\numberwithin{equation}{section}

\makeatletter
\@namedef{subjclassname@2020}{\textup{2020} Mathematics Subject Classification}
\makeatother

\begin{document}

\date{21 November 2023}

\subjclass[2020]{11F11, 11F27, 11F37}

\keywords{Appell functions, theta functions, mock theta functions, Ramanujan's lost notebook}

\begin{abstract}
    Ramanujan's lost notebook contains many mock theta functions and mock theta function identities not mentioned in his last letter to Hardy.  For example, we find the four tenth-order mock theta functions and their six identities.  The six identities themselves are of a spectacular nature and were first proved by Choi.  We also find eight sixth-order mock theta functions in the lost notebook, but among their many identities there is only a single relationship like those of the tenth-orders.  Using Appell function properties of Hickerson and Mortenson, we discover and prove three new identities for the sixth-order mock theta functions that are in the spirit of the six tenth-order identities.  We also include an additional nineteen tenth-order like identities for various combinations of second-, sixth-, and eighth-order mock theta functions.
\end{abstract}

\maketitle

\section{Introduction}

 Let $q:=q_{\tau}=e^{2 \pi i \tau}$, $\tau\in\mathbb{H}:=\{ z\in \mathbb{C}| \textup{Im}(z)>0 \}$, and define $\mathbb{C}^*:=\mathbb{C}-\{0\}$.  Recall
\begin{gather*}
(x)_n=(x;q)_n:=\prod_{i=0}^{n-1}(1-q^ix), \ \ (x)_{\infty}=(x;q)_{\infty}:=\prod_{i\ge 0}(1-q^ix),\notag \\
 \Theta(x;q):=(x)_{\infty}(q/x)_{\infty}(q)_{\infty}=\sum_{n=-\infty}^{\infty}(-1)^nq^{\binom{n}{2}}x^n,
\end{gather*}
where the equality between product and sum follows from Jacobi's triple product identity.    Let $a$ and $m$ be integers with $m$ positive.  Define
\begin{gather*}
\Theta_{a,m}:=\Theta(q^a;q^m), \ \ \Theta_m:=\Theta_{m,3m}=\prod_{i\ge 1}(1-q^{mi}), \ {\text{and }}\overline{\Theta}_{a,m}:=\Theta(-q^a;q^m).
\end{gather*}

In Ramanujan's last letter to Hardy, Ramanujan introduced mock theta functions.  He did not give a vigorous definition of mock theta functions; instead, he stated that they have asymptotic properties similar to those of theta functions but that they are not theta functions.  In all, Ramanujan presented seventeen functions divided into groups defined by orders, which were also not well-defined.  One finds four `3rd' order mock theta functions and several identities; ten `5th' order functions and identities; and three `7th' order functions, but with the statement that the `7th' order functions are not related.

Later, in the lost notebook \cite{RLN}, more mock theta functions and mock theta function identities were discovered.  Here we find first mention of the ten identities for the fifth-order mock theta functions which became known as the mock theta conjectures.  These conjectures were subsequently resolved by Hickerson \cite{H1}.  The four tenth-order mock theta functions as well as their six identities made their debut \cite{C1, C2, C3}.  The sixth-order mock theta functions and their identities also made their first appearance \cite{AH, BC}.  Although Hickerson also discovered and proved identities for the seventh-order functions that were analogous to the mock theta conjectures \cite{H2}, the seventh-order functions were conspicuously absent from the lost manuscript.  It is believed that pages from the lost manuscript were likely lost \cite[p. 287]{ABV}.

We begin by revisiting the tenth-order mock theta functions \cite{C1, C2, C3, RLN}
{\allowdisplaybreaks \begin{align*}
{\phi}_{10}(q)&:=\sum_{n\ge 0}\frac{q^{\binom{n+1}{2}}}{(q;q^2)_{n+1}}, \ \ {\psi}_{10}(q):=\sum_{n\ge 0}\frac{q^{\binom{n+2}{2}}}{(q;q^2)_{n+1}}, \\\ 
& \ \ \ \ \ {X}_{10}(q):=\sum_{n\ge 0}\frac{(-1)^nq^{n^2}}{(-q;q)_{2n}}, \ \  {\chi}_{10}(q):=\sum_{n\ge 0}\frac{(-1)^nq^{(n+1)^2}}{(-q;q)_{2n+1}},\notag
\end{align*}}%
which satisfy the following six slightly-rewritten identities.  Letting $\omega$ be a primitive third root of unity, we have \cite{C1,C2, RLN}
{\allowdisplaybreaks \begin{align}
q^{2}\phi_{10}(q^9)-\frac{\psi_{10}(\omega q)-\psi_{10}(\omega^2 q)}{\omega - \omega^2}
&=-q\frac{\Theta_{1,2}}{\Theta_{3,6}}\frac{\Theta_{3,15}\Theta_{6}}{\Theta_{3}},
\label{equation:tenth-id-1}\\
q^{-2}\psi_{10}(q^9)+\frac{\omega \phi_{10}(\omega q)-\omega^2\phi_{10}(\omega^2 q)}{\omega - \omega^2}
&=\frac{\Theta_{1,2}}{\Theta_{3,6}}\frac{\Theta_{6,15}\Theta_{6}}{\Theta_{3}},
\label{equation:tenth-id-2}\\
X_{10}(q^9)-\frac{\omega \chi_{10}(\omega q)-\omega^2\chi_{10}(\omega^2 q)}{\omega - \omega^2}
&=\frac{\overline{\Theta}_{1,4}}{\overline{\Theta}_{3,12}}\frac{\Theta_{18,30}\Theta_{3}}{\Theta_{6}},
\label{equation:tenth-id-3}\\
\chi_{10}(q^9)+q^{2}\frac{ X_{10}(\omega q)-X_{10}(\omega^2 q)}{\omega - \omega^2}&=-q^3\frac{\overline{\Theta}_{1,4}}{\overline{\Theta}_{3,12}}
\frac{\Theta_{6,30}\Theta_{3}}{\Theta_{6}},
\label{equation:tenth-id-4}
\end{align}}%
and \cite{C3, RLN}
\begin{align}
\phi_{10}(q)-q^{-1}\psi_{10}(-q^4)+q^{-2}\chi_{10}(q^8)&=\frac{\overline{\Theta}_{1,2}\Theta(-q^2;-q^{10})}{\Theta_{2,8}},
\label{equation:RLN-id-five}\\
\psi_{10}(q)+q\phi_{10}(-q^4)+X_{10}(q^8)&=\frac{\overline{\Theta}_{1,2}\Theta(-q^6;-q^{10})}{\Theta_{2,8}}.
\label{equation:RLN-id-six}
\end{align}

The six identities were originally found in Ramanujan's lost notebook \cite{RLN}.  What led Ramanujan to these identities is a continuing mystery.  Indeed, in Andrews and Berndt's fifth volume on Ramanujan's lost notebook \cite[p. 396]{ABV}, they state 

\smallskip
``{\em It is inconceivable that an identity such as (\ref{equation:RLN-id-five}) could be stumbled upon by a mindless search algorithm without any overarching theoretical insight.}''   

\smallskip
\noindent The six identities were first proved by Choi \cite{C1, C2, C3} using methods similar to those of Hickerson in his proof of the mock theta conjectures \cite{H1, H2}.  Identities (\ref{equation:tenth-id-1})--(\ref{equation:tenth-id-4}) were later given short proofs by Zwegers \cite{Zw3}.  

We recall that Appell functions are building blocks for Ramanujan's classical mock theta functions.  We will define them as follows
\begin{equation}
m(x,z;q):=\frac{1}{\Theta(z;q)}\sum_{r=-\infty}^{\infty}\frac{(-1)^rq^{\binom{r}{2}}z^r}{1-q^{r-1}xz}.\label{equation:mdef-eq}
\end{equation}
In terms of Appell functions, two of Ramanujan's sixth-order mock theta functions read \cite{AH}, \cite[Section 5]{HM}
\begin{align}
\phi(q)&:=\sum_{n\ge 0}\frac{(-1)^nq^{n^2}(q;q^2)_n}{(-q)_{2n}}=2m(q,-1;q^3)
\label{equation:6th-phi(q)},\\
\psi(q)&:=\sum_{n\ge 0}\frac{(-1)^nq^{(n+1)^2}(q;q^2)_n}{(-q)_{2n+1}}=m(1,-q;q^3)
\label{equation:6th-psi(q)}.
\end{align}

In \cite{M2018}, Mortenson gave short proofs of all six of Ramanujan's identities for the tenth-order mock theta functions by using a recent result on Appell function properties.  

\begin{theorem} \label{theorem:msplit-general-n} \cite[Theorem $3.5$]{HM} For generic $x,z,z'\in \mathbb{C}^*$ 
{\allowdisplaybreaks \begin{align}
 D_n(x,z,z';q)=z' \Theta_n^3  \sum_{r=0}^{n-1}
\frac{q^{{\binom{r}{2}}} (-xz)^r
\Theta\big(-q^{{\binom{n}{2}+r}} (-x)^n z z';q^n\big)
\Theta(q^{nr} z^n/z';q^{n^2})}
{\Theta(xz;q) \Theta(z';q^{n^2}) \Theta\big(-q^{{\binom{n}{2}}} (-x)^n z';q^n)\Theta(q^r z;q^n\big )},
\end{align}}%
where
\begin{equation}
D_n(x,z,z';q):=m(x,z;q) - \sum_{r=0}^{n-1} q^{{-\binom{r+1}{2}}} (-x)^r m\big({-}q^{{\binom{n}{2}-nr}} (-x)^n, z'; q^{n^2} \big).
\label{equation:Dn-def}
\end{equation}
\end{theorem}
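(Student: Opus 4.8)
The plan is to fix $x,z,q$ generic and regard the asserted identity as an equality of meromorphic functions of $z'$, proving it by the standard route for (quasi-)elliptic functions: match the quasi-period multiplier, match the principal parts at all poles, and then pin down the remaining constant. The first observation, immediate from the definition \eqref{equation:mdef-eq}, is that $m(x,z;q)=m(x,qz;q)$: under $z\mapsto qz$ both the denominator $\Theta(z;q)$ and the bilateral numerator acquire the same factor $-z^{-1}$ (using $\Theta(qz;q)=-z^{-1}\Theta(z;q)$ and a reindexing $r\mapsto r-1$). Applying this at base $q^{n^2}$ gives $m(X,q^{n^2}z';q^{n^2})=m(X,z';q^{n^2})$, so the left-hand side $D_n(x,z,z';q)$ is \emph{invariant} under $z'\mapsto q^{n^2}z'$, the first term $m(x,z;q)$ being free of $z'$. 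I would then check, using the quasi-periodicities $\Theta(q^n w;q^n)=-w^{-1}\Theta(w;q^n)$ and $\Theta(q^{n^2}w;q^{n^2})=-w^{-1}\Theta(w;q^{n^2})$, that the right-hand side is likewise invariant under $z'\mapsto q^{n^2}z'$: the multipliers produced by the prefactor $z'$, by $\Theta(-q^{\binom{n}{2}+r}(-x)^nzz';q^n)$, by $\Theta(q^{nr}z^n/z';q^{n^2})$, and by the two denominator theta functions cancel termwise. Thus both sides descend to meromorphic functions on the torus $\mathbb{C}^*/q^{n^2\mathbb{Z}}$.

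Second, I would match the polar parts in $z'$. The only $z'$-poles of the left side come from the summands $m(-q^{\binom{n}{2}-nr}(-x)^n,z';q^{n^2})$, whose poles arise from the factors $1-(q^{n^2})^{s-1}\bigl(-q^{\binom{n}{2}-nr}(-x)^n\bigr)z'$; as $s$ ranges over $\mathbb{Z}$ and $r$ over $\{0,\dots,n-1\}$ these combine into the single set $z'\in -q^{-\binom{n}{2}}(-x)^{-n}q^{n\mathbb{Z}}$. On the right side the denominator zeros sit at $z'\in q^{n^2\mathbb{Z}}$ (from $\Theta(z';q^{n^2})$) and at $z'\in -q^{-\binom{n}{2}}(-x)^{-n}q^{n\mathbb{Z}}$ (from $\Theta(-q^{\binom{n}{2}}(-x)^nz';q^n)$). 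The crux of the argument is therefore to show that the apparent poles of the right side at $z'\in q^{n^2\mathbb{Z}}$ are \emph{removable}, i.e. the finite $r$-sum in the numerator vanishes there, and that at the genuine poles $z'\in -q^{-\binom{n}{2}}(-x)^{-n}q^{n\mathbb{Z}}$ the residues of the two sides agree; both reduce, via the Jacobi triple product, to identities among finite products of theta functions.

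To explain why this particular $r$-sum appears, and to evaluate the remaining constant, I would use the reindexing $\rho=ns+r$ with $r\in\{0,\dots,n-1\}$, $s\in\mathbb{Z}$, together with the elementary splitting $n^2\binom{s}{2}=\binom{ns}{2}-\binom{n}{2}s$. Under this substitution the double series defining $\sum_{r}q^{-\binom{r+1}{2}}(-x)^r m\bigl(-q^{\binom{n}{2}-nr}(-x)^n,z';q^{n^2}\bigr)$ reassembles, residue class by residue class, into the single bilateral series of $m(x,z;q)$, \emph{except} that the geometric denominators carry $z'$ in place of $z$. The discrepancy between $1-q^{\rho-1}xz$ and the shifted $z'$-denominators is exactly a change-of-variable defect, which is the $n=1$ case of the theorem itself, namely
\[
m(x,z;q)-m(x,z';q)=\frac{z'(q)_\infty^3\,\Theta(z/z';q)\,\Theta(xzz';q)}{\Theta(z;q)\,\Theta(z';q)\,\Theta(xz;q)\,\Theta(xz';q)}.
\]
I would establish this $n=1$ identity first (by partial fractions on the bilateral series, or by the same elliptic-function method applied in a single variable), and then transport it through each summand at base $q^{n^2}$; the accumulated theta-quotient corrections are precisely the $r$-sum on the right. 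Once the multipliers and all principal parts have been matched, the difference of the two sides is a holomorphic elliptic function of $z'$, hence a constant $c(x,z,q)$, and the explicit $n=1$ evaluation forces $c=0$.

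I expect the main obstacle to be the residue bookkeeping of the second step: extracting the residue of each $m(\cdot,z';q^{n^2})$ at its pole, weighting by $q^{-\binom{r+1}{2}}(-x)^r$, summing over $r=0,\dots,n-1$, and recognizing the outcome as the residue of the theta quotient all require careful use of the quasi-periodicity relations and of $n^2\binom{s}{2}=\binom{ns}{2}-\binom{n}{2}s$. Coupled with this is the removable-pole verification at $z'\in q^{n^2\mathbb{Z}}$, which is itself a nontrivial theta identity that must be confirmed before the Liouville-type conclusion can be drawn.
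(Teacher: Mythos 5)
First, a point of reference: the paper does not prove this theorem at all --- it is quoted verbatim from Hickerson--Mortenson \cite[Theorem 3.5]{HM} --- so there is no internal proof to compare against, and your attempt must be judged on its own terms. The second strand of your sketch is the right mechanism and is essentially how the identity is actually obtained: reindex the bilateral series by $\rho=ns+r$ using $n^2\binom{s}{2}=\binom{ns}{2}-\binom{n}{2}s$ to get an \emph{exact} splitting of $m(x,z;q)$ into the $n$ Appell functions $m(X_r,\cdot\,;q^{n^2})$, $X_r:=-q^{\binom{n}{2}-nr}(-x)^n$, evaluated at the special arguments $z'_r=q^{nr}z^n$ (precisely the points where the numerator factors $\Theta(q^{nr}z^n/z';q^{n^2})$ vanish), and then move each summand to the common generic $z'$ by the $n=1$ case (\ref{equation:changing-z}) applied at base $q^{n^2}$, collecting the $n$ theta-quotient corrections. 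If you carry that strand through --- the remaining work being product rearrangements converting the base-$q^{n^2}$ factors such as $\Theta(q^{nr}z^n;q^{n^2})$ and $\Theta(X_rz';q^{n^2})$ into the stated $\Theta(q^rz;q^n)$, $\Theta(xz;q)$ and $\Theta(-q^{\binom{n}{2}}(-x)^nz';q^n)$ --- you do not need the Liouville argument at all.

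The elliptic-function strand, as written, contains a concrete error. From (\ref{equation:mdef-eq}), $m(X,z';q^{n^2})$ carries the prefactor $1/\Theta(z';q^{n^2})$, and the bilateral sum does \emph{not} vanish at $z'\in q^{n^2\mathbb{Z}}$: by the classical partial-fraction expansion $\sum_{s}(-1)^sq^{\binom{s}{2}}/(1-q^{s-1}X)=-(q)_\infty^2/\Theta(X;q)$, the residue of $m(X,z';Q)$ at $z'=1$ equals $1/\bigl((Q;Q)_\infty\Theta(X;Q)\bigr)\neq 0$. Hence the left-hand side has genuine simple poles at $z'\in q^{n^2\mathbb{Z}}$ in addition to those at $z'\in -q^{-\binom{n}{2}}(-x)^{-n}q^{n\mathbb{Z}}$, and correspondingly the poles of the right-hand side coming from $\Theta(z';q^{n^2})$ are \emph{not} removable; your plan to show that the finite $r$-sum vanishes there would fail (already for $n=1$ the right-hand side of (\ref{equation:changing-z}) visibly has a pole at $z_1\in q^{\mathbb{Z}}$). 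The repair is to match residues along this second family of poles as well, which is more bookkeeping of the same kind. Finally, the constant in the Liouville step cannot be extracted from ``the $n=1$ evaluation'': the natural way to kill it is to evaluate at $z'=q^{nr_0}z^n$, which annihilates only the $r_0$-th term of the sum and therefore requires the exact splitting from the first strand anyway --- at which point the first strand has already proved the theorem.
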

\noindent The idea behind the proofs is straightforward.  Once one has the Appell function forms of the tenth-order mock theta functions, one regroups the Appell functions by using (\ref{equation:Dn-def}) and then replaces them with the appropriate sums of quotients of theta functions given by Theorem \ref{theorem:msplit-general-n}.  Each of the six identities is then reduced to proving a theta function identity which can be verified through several applications of the three-term Weierstrass relation for theta functions \cite[(1)]{We}, \cite{Ko}:  For generic $a,b,c,d\in \mathbb{C}^*$
\begin{align}
\Theta(ac;q)\Theta(a/c;q)\Theta(bd;q)\Theta(b/d;q)&=\Theta(ad;q)\Theta(a/d;q)\Theta(bc;q)\Theta(b/c;q)\label{equation:3termWeier}\\
&\qquad +b/c \cdot \Theta(ab;q)\Theta(a/b;q)\Theta(cd;q)\Theta(c/d;q).\notag
\end{align}

There are many identities for the sixth-order functions in the lost notebook \cite{AH, BC}.  Here are two \cite[p. 135, Entry 7.4.2]{ABV}, \cite[p. 13, equations 5b, 6b]{RLN}
\begin{gather}
\phi(q^9)-\psi(q)-q^{-3}\psi(q^9)=\frac{\overline{\Theta}_{3,12}\Theta_{6}^2}{\overline{\Theta}_{1,4}\overline{\Theta}_{9,36}}
\label{equation:RLN6-A},\\
\frac{\psi(\omega q)-\psi(\omega^2q)}{(\omega-\omega^2)q}=\frac{\overline{\Theta}_{1,4}\overline{\Theta}_{9,36}\Theta_{3,6}}{\overline{\Theta}_{3,12}\Theta_{6}}
\label{equation:RLN6-B}.
\end{gather}
Whereas the latter follows from the Appell function property \cite{HM, Zw2}
\begin{equation}
m(x,z_1;q)-m(x,z_0;q)=\frac{z_0(q)_{\infty}^3\Theta(z_1/z_0;q)\Theta(xz_0z_1;q)}{\Theta(z_0;q)\Theta(z_1;q)\Theta(xz_0;q)\Theta(xz_1;q)}, \label{equation:changing-z}
\end{equation}
the former is reminiscent of the six tenth-order identities (\ref{equation:tenth-id-1})-(\ref{equation:RLN-id-six}).  In Section \ref{section:id0}, we will demonstrate that Theorem \ref{theorem:msplit-general-n} can also be used to prove (\ref{equation:RLN6-A}); however, verifying the resulting theta function identity is more difficult, and instead of standard theta function identities, we will use a Maple software package developed by Frank Garvan \cite{FG}.

Of course, there are many more sixth-order mock theta functions in the lost notebook \cite{AH, BC},  see Section \ref{section:additional-results} for a list.  It is natural to ask if they too enjoy identities similar to (\ref{equation:RLN6-A}).   Once one has the Appell function forms of the other sixth-order mock theta functions, see Section \ref{section:additional-results}, one can use Appell function properties and Theorem \ref{theorem:msplit-general-n} to construct identities where one side looks like the left-hand side of (\ref{equation:RLN6-A}) and the other side is a sum of quotients of theta functions.  One sees this play out in Section \ref{section:id0}.  But do the sums collapse to a single quotient?  This leads us to three new identities for the sixth-order mock theta functions.

\begin{theorem} \label{theorem:main} The following identities for the sixth-order mock theta functions $\rho(q)$, $\sigma(q)$, $\lambda(q)$, $\mu(q)$, $\phi_{\_}(q)$, and $\psi_{\_}(q)$ are true
\begin{align}
q\rho(q)+q^3\rho(q^9)-2\sigma(q^9)
&=q\frac{\Theta_{3,6}\Theta_{3}^2}{\Theta_{1,2}\Theta_{9,18}},
\label{equation:newSixth-1}\\
q\lambda(q)+q^{3}\lambda(q^9)-2\mu(q^{9})
&=-\frac{\Theta_{3,6}\Theta_{6}^2}{\overline{\Theta}_{1,4}\overline{\Theta}_{9,36}},
\label{equation:newSixth-2}\\
\psi\_(q)+q^{-3}\psi\_(q^9)-\phi\_(q^9)
&=q\frac{\overline{\Theta}_{3,12}\Theta_{3}^2}{\Theta_{1,2}\Theta_{9,18}}.
\label{equation:newSixth-3}
\end{align}
\end{theorem}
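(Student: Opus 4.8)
The plan is to adapt the method the excerpt applies to the six tenth-order identities and to (\ref{equation:RLN6-A}): rewrite each mock theta function as an Appell function $m(x,z;q)$, regroup the relevant combination as a $D_3$-expression in the sense of (\ref{equation:Dn-def}), invoke Theorem \ref{theorem:msplit-general-n} with $n=3$ to replace that combination by a sum of three theta quotients, and finally show that this sum collapses to the single quotient on the right of (\ref{equation:newSixth-1})--(\ref{equation:newSixth-3}). The input for all of this is the list of Appell representations of $\rho,\sigma,\lambda,\mu,\phi\_,\psi\_$ recorded in Section \ref{section:additional-results}; as with $\phi(q)=2m(q,-1;q^3)$ and $\psi(q)=m(1,-q;q^3)$ in (\ref{equation:6th-phi(q)})--(\ref{equation:6th-psi(q)}), these write every sixth-order function as a constant times an Appell function of modulus $q^3$.

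Here the choice $n=3$ is forced: the arguments occurring in each identity range over $q$ and $q^9=q^{3^2}$, while $D_n$ relates a modulus $Q$ to the modulus $Q^{n^2}$. Writing $Q=q^3$, a function $f(q)=m(x,z;Q)$ becomes $f(q^9)=m(x,z;Q^9)$ with $Q^9=q^{27}$, which is exactly the modulus carried by the subtracted terms of $D_3(x,z,z';Q)$. For (\ref{equation:newSixth-1}) I would let $m(x,z;q^3)$ be the Appell form of $\rho$, so that $q\rho(q)$ is a constant multiple of the leading term $m(x,z;Q)$ of $D_3$, while the three subtracted terms $m(-Q^{3-3r}(-x)^3,z';Q^9)$, $r=0,1,2$, must reassemble into $q^3\rho(q^9)$ and $-2\sigma(q^9)$. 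Matching the arguments of these three modulus-$q^{27}$ Appell functions to those of $\rho(q^9)$ and $\sigma(q^9)$ is done with the symmetry and shift properties of $m$; the coefficient $2$ and the presence of only two distinct functions reflect a coalescence of two of the three subtracted terms under those symmetries. The same bookkeeping, with the Appell forms of $\lambda,\mu$ and of $\psi\_,\phi\_$, is intended to handle (\ref{equation:newSixth-2}) and (\ref{equation:newSixth-3}). Note that the left side of (\ref{equation:newSixth-3}) has exactly the shape of that of (\ref{equation:RLN6-A}), so this case should run in parallel with the proof of (\ref{equation:RLN6-A}) promised in Section \ref{section:id0}. Wherever the regrouping leaves a difference $m(x,z_1;q^3)-m(x,z_0;q^3)$ with a shared first argument, I would clear it with the changing-$z$ relation (\ref{equation:changing-z}), which simply contributes extra theta quotients to the right side.

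After these substitutions, each identity reduces to a pure theta-function identity: a sum of three theta quotients, coming from the $r=0,1,2$ terms of Theorem \ref{theorem:msplit-general-n} together with any changing-$z$ corrections, must equal the single quotient displayed in the theorem. This collapse is the main obstacle. In the most favorable cases it can be forced by repeated application of the three-term Weierstrass relation (\ref{equation:3termWeier}), just as in the tenth-order proofs; but the excerpt already cautions that for the structurally analogous (\ref{equation:RLN6-A}) the resulting theta identity is harder and is best checked with Frank Garvan's Maple package \cite{FG}. I therefore expect that at least (\ref{equation:newSixth-3}), and very likely (\ref{equation:newSixth-1}) and (\ref{equation:newSixth-2}) as well, will have to be completed by that computational route rather than by a purely hand-driven chain of Weierstrass relations. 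As a safeguard against sign or shift errors in the matching of the modulus-$q^{27}$ terms, I would finish by comparing the $q$-expansions of both sides of each identity to low order.
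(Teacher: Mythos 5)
Your proposal matches the paper's proof essentially step for step: the paper writes each function in its Appell form, regroups the combination as a $D_3$-expression, uses (\ref{equation:mxqz-flip}) and (\ref{equation:changing-z}) to coalesce two of the three subtracted modulus-$Q^9$ terms (which is exactly where the coefficient $2$ arises), and then verifies the resulting theta-function identities (\ref{equation:finalTheta-id2})--(\ref{equation:finalTheta-id4}) with Garvan's Maple packages rather than by hand, just as you anticipated. The only cosmetic slips are that $\rho,\sigma,\lambda,\mu$ have Appell forms of modulus $q^6$ rather than $q^3$ (so the relevant expressions are $D_3(\cdot;q^6)$ with $Q^9=q^{54}$), and that (\ref{equation:newSixth-2}) requires summing two $D_3$-expressions because $\lambda$ and $\mu$ are each sums of two Appell functions.
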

\noindent The sixth-order mock theta functions $\rho(q)$, $\sigma(q)$, $\lambda(q)$, $\mu(q)$, $\phi_{\_}(q)$ are all found in the lost manuscript, so one could ask why are identities (\ref{equation:newSixth-1})-(\ref{equation:newSixth-3}) absent?   We point out that although $\phi_{\_}(q)$, and $\psi_{\_}(q)$ were discovered by Berndt and Chan in \cite{BC}, one also finds $\phi_{\_}(q)$ in \cite[pp. 6, 16]{RLN}.

In Section \ref{section:additional-results}, we state an additional nineteen tenth-order like identities for second-, sixth-, and eighth-order mock theta functions; we also recall the relevant mock theta functions and Appell function forms.  In Section \ref{section:prelim}, we recall basic facts about theta functions and Appell functions.  In Sections \ref{section:tech3} and \ref{section:tech2}, we prove theta function identities using Frank Garvan's Maple packages for $q$-series and theta functions \cite{FG}. In Section \ref{section:id0} we prove identity (\ref{equation:RLN6-A}). In Sections \ref{section:id1} to \ref{section:id3} we prove identities (\ref{equation:newSixth-1})  to (\ref{equation:newSixth-3}) respectively.  In Section \ref{section:newTheorems} we prove the additional nineteen tenth-order like identities for second-, sixth-, and eighth-order mock theta functions.

\section{Mock theta functions and additional results} \label{section:additional-results} 

We have also discovered more mock theta function identities in the spirit of our Theorem \ref{theorem:main}, but they instead follow from the $n=2$ specialization of Theorem \ref{theorem:msplit-general-n}, where $x=1$.   Before we state the new identities, we recall three second-order mock theta functions, the remaining sixth-order mock theta functions, two eighth-order mock theta functions, and two miscellaneous mock theta functions from Ramanujan's lost notebook.  All but the two miscellaneous mock theta functions can be found in \cite[Section 5]{HM}.  We omit the sixth-order mock theta function $\gamma(q)$.

\smallskip
\noindent {\bf `second-order' functions}
{\allowdisplaybreaks \begin{align}
A_2(q)&:=\sum_{n\ge 0}\frac{q^{n+1}(-q^2;q^2)_n}{(q;q^2)_{n+1}}=\sum_{n\ge 0}\frac{q^{(n+1)^2}(-q;q^2)_n}{(q;q^2)_{n+1}^2}=-m(q,q^2;q^4)\label{equation:2nd-A(q)}\\
B_2(q)&:=\sum_{n\ge 0}\frac{q^{n}(-q;q^2)_n}{(q;q^2)_{n+1}}=\sum_{n\ge 0}\frac{q^{n^2+n}(-q^2;q^2)_n}{(q;q^2)_{n+1}^2}=-q^{-1}m(1,q^3;q^4)\label{equation:2nd-B(q)}\\
 \mu_2(q)&:=\sum_{n\ge 0}\frac{(-1)^nq^{n^2}(q;q^2)_n}{(-q^2;q^2)_{n}^2}=2m(-q,-1;q^4)+2m(-q,q;q^4)
\label{equation:2nd-mu(q)}\\
&\ =4m(-q,-1;q^4)-\frac{\Theta_{2,4}^4}{\Theta_1^3}\notag
\end{align}}%

\smallskip
\noindent {\bf `sixth-order' functions}
{\allowdisplaybreaks \begin{align}
\rho(q)&:=\sum_{n\ge 0}\frac{q^{\binom{n+1}{2}}(-q)_n}{(q;q^2)_{n+1}}=-q^{-1}m(1,q;q^6)\label{equation:6th-rho(q)}\\
\sigma(q)&:=\sum_{n\ge 0}\frac{q^{\binom{n+2}{2}}(-q)_n}{(q;q^2)_{n+1}}=-m(q^2,q;q^6)\label{equation:6th-sigma(q)}\\
\lambda(q)&:=\sum_{n\ge 0}\frac{(-1)^nq^n(q;q^2)_n}{(-q)_n}
=q^{-1}m(1,-q^2;q^6)+q^{-1}m(1,-q;q^6)\label{equation:6th-lambda(q)}\\
&\ =2q^{-1}m(1,-q^2;q^6)+\frac{\Theta_{1,2}\overline{\Theta}_{3,12}}{\overline{\Theta}_{1,4}}\notag\\
\mu(q)&:={\sum_{n\ge 0}}^*\frac{(-1)^n(q;q^2)_n}{(-q)_n}=\frac12 +\frac12 \sum_{n\ge 0}\frac{(-1)^nq^{n+1}(1+q^n)(q;q^2)_n}{(-q;q)_{n+1}}\label{equation:6th-mu(q)}\\
&\ =m(q^2,-1;q^6)+ m(q^2,-q^3;q^6)=2m(q^2,-1;q^6)-\frac{\Theta_{1,2}\overline{\Theta}_{1,3}}{2\overline{\Theta}_{1,4}}\notag\\
 {\phi}\_(q)&:=\sum_{n\ge 1}\frac{q^n(-q;q)_{2n-1}}{(q;q^2)_n}=-\frac{3}{4}m(q,q;q^3)-\frac{1}{4}m(q,-q;q^3)
\label{equation:phibar}\\
&\ =-m(q,q;q^3)-q\frac{\overline{\Theta}_{3,12}^3}{\Theta_1\overline{\Theta}_{1,4}}\notag\\
{\psi}\_(q)&:=\sum_{n\ge 1}\frac{q^n(-q;q)_{2n-2}}{(q;q^2)_n}=-\frac{3}{4}m(1,q;q^3)+\frac{1}{4}m(1,-q;q^3)\label{equation:psibar}\\
&\ =-\frac{1}{2}m(1,q;q^3)+q\frac{{\Theta}_{6}^3}{2\Theta_1 \Theta_2}\notag
\end{align}}%

\smallskip
\noindent {\bf `eighth-order' functions}
{\allowdisplaybreaks \begin{align}
U_0(q)&:=\sum_{n\ge 0}\frac{q^{n^2}(-q;q^2)_n}{(-q^4;q^4)_{n}}=2m(-q,-1;q^4)\label{equation:8th-U0(q)}\\
U_1(q)&:=\sum_{n\ge 0}\frac{q^{(n+1)^2}(-q;q^2)_n}{(-q^2;q^4)_{n+1}}=-m(-q,-q^2;q^4)\label{equation:8th-U1(q)}
\end{align}}

\smallskip
\noindent {\bf miscellaneous functions}
\begin{align}
  \phi_R(q)&:=\sum_{n\ge0}\frac{q^{n+1}(-q)_{2n}}{(q;q^2)_{n+1}^2}=-\frac{1}{2}m(1,q;q^2)
\label{equation:misc-RLNp3}\\
\xi_{R}(q)&:=\sum_{n\ge0}{}^{*}\frac{(-1)^n(q;q^2)_n}{(-q)_{n}^2}
=\frac{1}{2}+\frac{1}{2}\sum_{n\ge0}\frac{(-1)^{n}q^{n+1}(q;q^2)_n(2+q^n+q^{n+1})}{(-q)_{n+1}^2}
\label{equation:misc-RLNp4}\\
& \ =2m(1,-1;q)\notag
\end{align}

The reader will note that the sums in (\ref{equation:6th-mu(q)}) and (\ref{equation:misc-RLNp4}) do not converge.  However, for each sum, the sequence of even partial sums and the sequence of odd partial sums both converge.  We define $\sum^{\star}$ to be the average of the two limits, see for example \cite[(0.16)]{AH}.

The second-order mock theta functions $A_2(q)$ and $\mu_2(q)$ are found in \cite[p. 8]{RLN} and \cite[pp. 8, 29]{RLN} respectively.   As is pointed out in \cite{Mc07}, $B_2(q)$ is related to $A_2(q)$ and $\mu_2(q)$ through modularity.  The locations of the sixth-order mock theta from the lost notebook are detailed in \cite{AH}.   The sixth-order mock theta functions $\phi_{-}(q)$ and $\psi_{-}(q)$ were discovered in \cite{BC}, but one can also find $\phi_{-}(q)$ in \cite[pp. 6, 16]{RLN}.  The two eighth-order mock theta functions are found in \cite{GM}.  The two miscellaneous mock theta functions $\phi_R(q)$ and $\xi_R(q)$ are found in \cite[p. 3]{RLN} and \cite[p. 4]{RLN} respectively.  The functions $\phi_R(q)$ and $\xi_R(q)$ also appear  as the mock modular forms $H_2^{(4)}$ and $H_1^{(2)}$ of type $2A$ in \cite{CDH}.

To the best of our knowledge, the following nineteen identities are new.

\begin{theorem}\label{theorem:3B-ids} We have
{\allowdisplaybreaks \begin{align}
qB_2(q)-2A_2(-q^4)&=q \frac{\Theta_{2}}{\Theta_{1}^2}\frac{\Theta_{4}^5\Theta_{16}^2}{\Theta_{8}^5},
\label{equation:Bid-1}\\
qB_2(q)+\frac{1}{2}\mu_2(q^4)&=\frac{1}{2}\frac{\Theta_{2}}{\Theta_{1}^2}\frac{\Theta_{4}^3\Theta_{8}}{\Theta_{16}^2},
\label{equation:Bid-2}\\
qB_2(q)+\frac{1}{4}\mu_2(q^4)-A_2(-q^4)&=\frac{1}{4}\frac{\Theta_{2}^6}{\Theta_{1}^{4}}\frac{\Theta_{4}^3}{\Theta_{8}^4}.
\label{equation:Bid-3}
\end{align}}%
\end{theorem}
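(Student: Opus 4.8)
The plan is to rewrite each identity in terms of the Appell functions $m$ of (\ref{equation:mdef-eq}) and then apply the $n=2$, $x=1$ case of Theorem \ref{theorem:msplit-general-n} at base $q^4$. Using the Appell forms (\ref{equation:2nd-A(q)}), (\ref{equation:2nd-B(q)}) and (\ref{equation:2nd-mu(q)}), I would first record
\[
qB_2(q)=-m(1,q^3;q^4),\qquad A_2(-q^4)=-m(-q^4,q^8;q^{16}),
\]
\[
\tfrac12\mu_2(q^4)=2m(-q^4,-1;q^{16})-\tfrac{\Theta_{8,16}^4}{2\Theta_4^3},
\]
so that every mock piece appearing in (\ref{equation:Bid-1})--(\ref{equation:Bid-3}) is an $m$-function either at base $q^4$ or at base $q^{16}=(q^4)^4$. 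This is precisely the structure produced by (\ref{equation:Dn-def}): specializing $n=2$, $x=1$ and replacing $q$ by $q^4$ gives
\[
D_2(1,z,z';q^4)=m(1,z;q^4)-m(-q^4,z';q^{16})+q^{-4}m(-q^{-4},z';q^{16}),
\]
and the choice $z=q^3$ makes the leading term equal to $m(1,q^3;q^4)=-qB_2(q)$. Theorem \ref{theorem:msplit-general-n} then rewrites $D_2(1,q^3,z';q^4)$ as a two-term sum of quotients of theta functions; each chosen $z'$ below is admissible in the sense that the theta functions in the denominators of Theorem \ref{theorem:msplit-general-n} do not vanish.

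I would treat (\ref{equation:Bid-2}) first, taking $z'=-1$. The two base-$q^{16}$ terms produced by $D_2$ are then $m(-q^4,-1;q^{16})$ and $m(-q^{-4},-1;q^{16})$; the standard symmetry $m(x,z;q)=x^{-1}m(x^{-1},z^{-1};q)$ converts the latter into $-q^4m(-q^4,-1;q^{16})$, and together with the $2m(-q^4,-1;q^{16})$ coming from $\tfrac12\mu_2(q^4)$ all of the Appell pieces cancel identically. Hence $qB_2(q)+\tfrac12\mu_2(q^4)$ equals $-D_2(1,q^3,-1;q^4)-\Theta_{8,16}^4/(2\Theta_4^3)$, a pure theta expression. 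For (\ref{equation:Bid-1}) I would instead take $z'=q^8$; the same symmetry gives $q^{-4}m(-q^{-4},q^8;q^{16})=-m(-q^4,q^{-8};q^{16})$, and combining with the $2m(-q^4,q^8;q^{16})$ from $-2A_2(-q^4)$ leaves only the difference $m(-q^4,q^8;q^{16})-m(-q^4,q^{-8};q^{16})$, which (\ref{equation:changing-z}) collapses to a single further theta quotient. Thus $qB_2(q)-2A_2(-q^4)$ is once more a sum of quotients of theta functions.

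The third identity (\ref{equation:Bid-3}) does not require a separate application of Theorem \ref{theorem:msplit-general-n}. Its left-hand side equals exactly $\tfrac12$ of the left-hand side of (\ref{equation:Bid-1}) plus $\tfrac12$ of the left-hand side of (\ref{equation:Bid-2}); indeed no single $z'$ can match both $z=-1$ and $z=q^8$ at once, so the averaging is the natural route. Once (\ref{equation:Bid-1}) and (\ref{equation:Bid-2}) are in hand, (\ref{equation:Bid-3}) reduces to checking that its displayed right-hand side equals the corresponding average of the right-hand sides of (\ref{equation:Bid-1}) and (\ref{equation:Bid-2}), which is a single theta-function identity.

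The genuine work, and the step I expect to be the main obstacle, is the conclusion common to all three identities: showing that the assembled sums of theta quotients --- the two-term specialization of Theorem \ref{theorem:msplit-general-n}, augmented by the change-of-$z$ quotient in the case of (\ref{equation:Bid-1}) --- collapse to the single quotients displayed on the right-hand sides. In principle these follow from repeated application of the three-term Weierstrass relation (\ref{equation:3termWeier}) after clearing all theta functions to a common modulus, but, exactly as with the theta-function identities established in Sections \ref{section:tech3} and \ref{section:tech2}, the most reliable route is to confirm each resulting identity with Frank Garvan's Maple package \cite{FG}.
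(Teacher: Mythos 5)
Your proposal follows essentially the same strategy as the paper: express everything through the Appell forms (\ref{equation:2nd-A(q)})--(\ref{equation:2nd-mu(q)}), assemble the pieces into a $D_2(1,q^3,z';q^4)$ via the shift and flip properties, evaluate that $D_2$ through the $n=2$ case of Theorem \ref{theorem:msplit-general-n}, and obtain (\ref{equation:Bid-3}) as half the sum of the first two identities. A few points of divergence are worth flagging. For (\ref{equation:Bid-1}), your leftover difference $m(-q^4,q^8;q^{16})-m(-q^4,q^{-8};q^{16})$ is in fact identically zero by (\ref{equation:mxqz-fnq-z}) (equivalently, the correction term from (\ref{equation:changing-z}) carries the factor $\Theta(q^{16};q^{16})=0$), so you land exactly on $-D_2(1,q^3,q^8;q^4)$, which is precisely the paper's reduction. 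For (\ref{equation:Bid-2}) the paper instead uses the two-$m$ form $\mu_2(q)=2m(-q,-1;q^4)+2m(-q,q;q^4)$ together with the choice $z'=q^{12}$, which makes the left-hand side equal to $-D_2(1,q^3,q^{12};q^4)$ on the nose with no residual theta term; your choice $z'=-1$ with the single-$m$-plus-theta form of $\mu_2$ also works, but it drags the extra quotient $\Theta_{8,16}^4/(2\Theta_4^3)$ through the computation and requires evaluating $D_2(1,q^3,-1;q^4)$ instead of the cleaner $D_2(1,q^3,q^{12};q^4)$ of Proposition \ref{proposition:3B-ids}. Finally, for this particular theorem no appeal to Garvan's package is actually needed: the paper evaluates both $D_2$ terms by hand using (\ref{equation:H1Thm1.1}), (\ref{equation:theta-mod}), (\ref{equation:theta-roots}) and product rearrangements, and the combination step for (\ref{equation:Bid-3}) reduces to the elementary splitting identity (\ref{equation:theta-split}) rather than a machine verification, so your "main obstacle" is lighter than you anticipate here (though your fallback to \cite{FG} is the route the paper takes elsewhere, e.g.\ in Sections \ref{section:tech3} and \ref{section:tech2}).
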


\begin{theorem}\label{theorem:3rho-ids} We have
{\allowdisplaybreaks \begin{align}
q\rho(q)-2A_2(-q^6)&=q \frac{\Theta_{2}^2\Theta_{3}^2\Theta_{6}\Theta_{8}\Theta_{24}}
{\Theta_{1}^2\Theta_{4}\Theta_{12}^3},
\label{equation:rho-id-1}\\
q\rho(q)+\frac{1}{2}\mu_2(q^6)
&=\frac{1}{2}\frac{\Theta_{2}\Theta_{3}^2\Theta_{4}^2}
{\Theta_{1}^2\Theta_{8}\Theta_{24}},
\label{equation:rho-id-2}\\
q\rho(q)+\frac{1}{4}\mu_2(q^6)-A_2(-q^6)&=\frac{1}{4}\frac{\Theta_{2}^6\Theta_{3}^4}
{\Theta_{1}^4\Theta_{4}^2\Theta_{6}\Theta_{12}^2}.
\label{equation:rho-id-3}
\end{align}}%
\end{theorem}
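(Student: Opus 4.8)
The plan is to prove the three identities of Theorem~\ref{theorem:3rho-ids} in exactly the same manner sketched for the tenth-order and the new sixth-order identities, specializing Theorem~\ref{theorem:msplit-general-n} to the case $n=2$, $x=1$. The common ingredient is the Appell function representation $\rho(q)=-q^{-1}m(1,q;q^6)$ from~(\ref{equation:6th-rho(q)}). The remarks preceding Theorem~\ref{theorem:3B-ids} tell us that these nineteen identities all arise from the $n=2$, $x=1$ specialization, so the strategy here is fixed in advance rather than discovered: I expect to reduce each claimed identity to a pure theta-function identity, which is then verified mechanically via Garvan's Maple packages as promised in the introduction.

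First I would record the $n=2$, $x=1$ form of Theorem~\ref{theorem:msplit-general-n}. With these substitutions, the defining relation~(\ref{equation:Dn-def}) becomes $D_2(1,z,z';q)=m(1,z;q)-m(-q,z';q^4)-q^{-1}m(-q,z';q^4)$ (after setting $r=0,1$ and simplifying the $q^{-\binom{r+1}{2}}(-1)^r$ coefficients), so that a single $m(1,z;q)$ gets split into a $z'$-parametrized combination of level-$q^4$ Appell functions, with the defect captured by the explicit double-theta-quotient sum on the right-hand side of the theorem. Comparing the left-hand sides of~(\ref{equation:rho-id-1})--(\ref{equation:rho-id-3}): in each identity the term $q\rho(q)=-m(1,q;q^6)$ supplies the $m(1,z;q)$ to be split (so $q\mapsto q^6$, $z=q$), while the terms $A_2(-q^6)=-m(q\cdot q^6,q^2\cdot q^6;\,q^4\cdot q^6)$ and $\mu_2(q^6)$, expressed through~(\ref{equation:2nd-A(q)}) and~(\ref{equation:2nd-mu(q)}) at argument $q^6$, supply precisely the level-$q^{24}$ Appell functions that $D_2$ produces. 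The coefficients $-2$, $+\tfrac12$, $+\tfrac14-1$ on the second-order pieces are exactly what is needed to match the two $m(-q^7,z';q^{24})$ summands coming from $r=0,1$, for an appropriate choice of the free parameter $z'$.

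The heart of the argument is then the choice of $z'$ and the collapse of the sum side. For each of the three identities I would pick the value of $z'$ that makes the combination of $m(-q^7,z';q^{24})$ functions agree with the stated second-order combination—possibly using the auxiliary relations in~(\ref{equation:2nd-A(q)})--(\ref{equation:2nd-mu(q)}) (for instance the second line $\mu_2(q)=4m(-q,-1;q^4)-\Theta_{2,4}^4/\Theta_1^3$, which trades a second $m$-term for a pure theta quotient) and the change-of-$z$ formula~(\ref{equation:changing-z}). Once $z'$ is fixed, the left-hand side of each identity equals $-D_2(1,q;z';q^6)$ plus possibly an explicit theta quotient, and Theorem~\ref{theorem:msplit-general-n} replaces $D_2$ by its double sum of theta quotients. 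What remains is to show that this sum of theta quotients collapses to the single quotient on the right-hand side of~(\ref{equation:rho-id-1})--(\ref{equation:rho-id-3}).

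The main obstacle, exactly as flagged for~(\ref{equation:RLN6-A}), is this final theta-function identity. In principle one could push it through by repeated application of the three-term Weierstrass relation~(\ref{equation:3termWeier}), but the number of theta factors and the modulus juggling (levels $q^6$, $q^{12}$, $q^{24}$, and the powers $\Theta_1,\Theta_2,\Theta_3,\Theta_4,\Theta_6,\Theta_8,\Theta_{12},\Theta_{24}$ appearing on the right) make a by-hand Weierstrass reduction unwieldy. I would therefore clear denominators to reduce each claim to the vanishing of a single $q$-series, express every $\Theta_{a,m}$ and $\Theta_m$ in terms of Dedekind eta-type products, and verify the resulting eta-quotient identity using Garvan's \texttt{qseries} and \texttt{thetaids} Maple packages~\cite{FG}: establish that both sides are modular of the same weight and level, compute enough Fourier coefficients to exceed the valence bound (Sturm) bound, and confirm equality. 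The subtlety worth checking carefully is the convergence/averaging issue for $\mu_2$ noted after~(\ref{equation:misc-RLNp4}): since $\mu_2$ is defined via $\sum^\star$, one must ensure the Appell-function identities~(\ref{equation:2nd-mu(q)}) are applied to the averaged value, so that the partial-sum ambiguity does not corrupt the coefficient matching.
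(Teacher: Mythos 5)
Your overall strategy is the paper's: write $q\rho(q)=-m(1,q;q^6)$ via (\ref{equation:6th-rho(q)}), recognize the second-order pieces as the base-$q^{24}$ Appell functions produced by the $n=2$, $x=1$ case of Theorem \ref{theorem:msplit-general-n}, and reduce everything to theta-function identities. But the proposal gets wrong, or leaves open, the three points that carry the actual content. First, your $n=2$ specialization is mis-stated: from (\ref{equation:Dn-def}) one gets $D_2(1,z,z';q)=m(1,z;q)-m(-q,z';q^4)+q^{-1}m(-q^{-1},z';q^4)$, and after the flip (\ref{equation:mxqz-flip}) the last term equals $-m(-q,1/z';q^4)$ --- not $-q^{-1}m(-q,z';q^4)$; likewise $A_2(-q^6)=-m(-q^6,q^{12};q^{24})$ (substitution, not multiplication, in (\ref{equation:2nd-A(q)})), and the split terms at base $q^6$ involve $m(-q^{\pm6},z';q^{24})$, not $m(-q^7,\cdot)$. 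Second, the choice of $z'$ is the whole proof and you leave it undetermined: the paper takes $z'=q^{12}$, so that both split terms become $m(-q^6,q^{12};q^{24})=-A_2(-q^6)$ via (\ref{equation:mxqz-fnq-z}), giving $-q\rho(q)+2A_2(-q^6)=D_2(1,q,q^{12};q^6)$; and $z'=q^{18}$, so that the two split terms become $m(-q^6,-1;q^{24})$ and $m(-q^6,q^6;q^{24})$, i.e.\ $\tfrac12\mu_2(q^6)$, via (\ref{equation:mxqz-fnq-newz}) and (\ref{equation:mxqz-fnq-z}), giving $-q\rho(q)-\tfrac12\mu_2(q^6)=D_2(1,q,q^{18};q^6)$. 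Third, your plan to ``pick a $z'$ for each of the three identities'' fails for (\ref{equation:rho-id-3}): its left-hand side is exactly half the sum of the left-hand sides of (\ref{equation:rho-id-1}) and (\ref{equation:rho-id-2}) and is not $-D_2(1,q,z';q^6)$ for any single $z'$. The paper instead adds the first two identities and verifies the residual theta identity (\ref{equation:rho-id-3-thetaID}).

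Two smaller remarks. The paper evaluates $D_2(1,q,q^{12};q^6)$ and $D_2(1,q,q^{18};q^6)$ by hand in Proposition \ref{proposition:3rho-ids}, using (\ref{equation:theta-mod}), (\ref{equation:theta-roots}), (\ref{equation:H1Thm1.1}) and one application of the Weierstrass relation (\ref{equation:3termWeier}); Maple is reserved for (\ref{equation:rho-id-3-thetaID}) only. Your all-Maple fallback would also work but is heavier than necessary here. Finally, your caution about the $\sum^{\star}$ averaging is moot for this theorem: the convergence issue concerns $\mu(q)$ and $\xi_R(q)$ in (\ref{equation:6th-mu(q)}) and (\ref{equation:misc-RLNp4}), not $\mu_2(q)$, whose defining series converges outright.
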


\begin{theorem}\label{theorem:3lambda-ids} We have
{\allowdisplaybreaks \begin{align}
\frac{q}{2}\lambda(q)+2A_2(-q^6)&=\frac{q}{2}\frac{\Theta_{1}\Theta_{3}\Theta_{4}^5\Theta_{6}^3\Theta_{24}^{2}}
{\Theta_{2}^4\Theta_{8}^{2}\Theta_{12}^5},
\label{equation:lambda-id-1}\\
\frac{q}{2}\lambda(q)-\frac{1}{2}\mu_2(q^6)&=-\frac{1}{2}\frac{\Theta_{1}\Theta_{3}\Theta_{6}\Theta_{8}^2\Theta_{12}}
{\Theta_{2}^2\Theta_{4}\Theta_{24}^2},
\label{equation:lambda-id-2}\\
\frac{q}{2}\lambda(q)-\frac{1}{4}\mu_2(q^6)+A_2(-q^6)&=-\frac{1}{4}\frac{\Theta_{1}^2\Theta_{3}^2\Theta_{4}\Theta_{6}^2}
{\Theta_{2}^3\Theta_{12}^3}.
\label{equation:lambda-id-3}
\end{align}}%
\end{theorem}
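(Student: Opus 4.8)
The plan is to reduce each of the three identities in Theorem \ref{theorem:3lambda-ids} to a theta function identity and then verify that identity mechanically, exactly in the spirit of the $n=2$, $x=1$ reduction that the author advertises in Section \ref{section:additional-results}. The starting point is the Appell function forms already collected in the paper: from (\ref{equation:6th-lambda(q)}) we have $\lambda(q)=q^{-1}m(1,-q^2;q^6)+q^{-1}m(1,-q;q^6)$, from (\ref{equation:2nd-A(q)}) we have $A_2(q)=-m(q,q^2;q^4)$ so that $A_2(-q^6)=-m(-q^6,q^{12};q^{24})$ after the substitution $q\mapsto -q^6$, and from (\ref{equation:2nd-mu(q)}) we have the two equivalent expressions for $\mu_2(q)$, whence $\mu_2(q^6)$ is a combination of $m(-q^6,\cdot;q^{24})$ terms. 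The common modulus after these substitutions is $q^{24}$, which is precisely $q^{n^2}$ with $n=2$ applied to base $q^6$; this is the structural hint that Theorem \ref{theorem:msplit-general-n} with $n=2$ and $x=1$ is the right tool.

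\medskip

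\emph{First} I would use the $n=2$, $x=1$ specialization of $D_n$ in (\ref{equation:Dn-def}) to express each Appell function appearing in $\lambda(q)$ at modulus $q^6$ as a sum of two Appell functions at modulus $q^{24}$ plus an explicit theta quotient. Concretely, $D_2(1,z,z';q^6)=m(1,z;q^6)-m(-q^3,z';q^{24})-m(-q^3\cdot q^{-6},z';q^{24})$-type combinations (one reads off the exact shifts from (\ref{equation:Dn-def})), and by Theorem \ref{theorem:msplit-general-n} the left side equals a finite sum of theta quotients. \emph{Second}, I would choose the free parameter $z'$ in each application to match the second argument of the $A_2$ and $\mu_2$ Appell functions — that is, $z'=q^{12}$ for the $A_2$ piece and $z'=-1$ or $z'=q^{12}$ for the $\mu_2$ pieces — so that after regrouping, the modulus-$q^{24}$ Appell functions on the two sides of each proposed identity cancel identically. \emph{Third}, once every $m$-function has cancelled, each of (\ref{equation:lambda-id-1})--(\ref{equation:lambda-id-3}) collapses to a pure theta function identity, whose right side is the single quotient displayed in the theorem and whose left side is the residual sum of theta quotients produced by Theorem \ref{theorem:msplit-general-n} together with the $\mu_2$ correction terms coming from the second equalities in (\ref{equation:2nd-mu(q)}) and (\ref{equation:6th-lambda(q)}).

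\medskip

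\emph{The main obstacle} will be the final step: verifying that the residual sum of theta quotients genuinely collapses to the claimed single quotient. These are not one-line consequences of the three-term Weierstrass relation (\ref{equation:3termWeier}); as the author already notes for identity (\ref{equation:RLN6-A}), the resulting theta identities at these higher moduli are intricate. I therefore expect to follow the paper's own methodology and discharge these theta identities using Frank Garvan's Maple packages \cite{FG}, as the author does in Sections \ref{section:tech3} and \ref{section:tech2}; the identities (\ref{equation:lambda-id-2}) and (\ref{equation:lambda-id-3}) in particular, with their products of five or six distinct theta constants $\Theta_1,\Theta_2,\Theta_3,\Theta_4,\Theta_6,\Theta_8,\Theta_{12},\Theta_{24}$, are of exactly the type that defeats hand computation but is routine for a theta-function package. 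A secondary subtlety is bookkeeping: the three identities share the same $\lambda(q)$ but differ in how $A_2(-q^6)$ and $\mu_2(q^6)$ are combined, so I would derive a single master relation expressing $\tfrac{q}{2}\lambda(q)$ in terms of $A_2(-q^6)$, $\mu_2(q^6)$, and explicit theta quotients, and then obtain (\ref{equation:lambda-id-1})--(\ref{equation:lambda-id-3}) as the three linear combinations that eliminate, respectively, the undesired Appell pieces — noting that (\ref{equation:lambda-id-3}) is visibly the average of the structural content of (\ref{equation:lambda-id-1}) and (\ref{equation:lambda-id-2}), which provides a useful consistency check on the Maple output.
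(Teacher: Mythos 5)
Your proposal matches the paper's proof in all essentials: the paper likewise writes $q\lambda(q)+4A_2(-q^6)$ and $q\lambda(q)-\mu_2(q^6)$ as sums of two $D_2(1,z,z';q^6)$ terms (with $z\in\{-q,-q^2\}$ and $z'$ chosen from $q^{12}$, $-1$, $-q^{12}$ to match the $A_2$ and $\mu_2$ Appell pieces), evaluates or expands each $D_2$ via Corollary \ref{corollary:msplitn2zprime}, and discharges the resulting theta identities --- including the one needed to collapse the sum of (\ref{equation:lambda-id-1}) and (\ref{equation:lambda-id-2}) into (\ref{equation:lambda-id-3}) --- with Garvan's Maple packages. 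The only cosmetic difference is that the paper evaluates some of the $D_2$ terms by hand via (\ref{equation:H1Thm1.1}) and the Weierstrass relation rather than sending everything to Maple.
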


\begin{theorem}\label{theorem:3Rphi-ids} We have
{\allowdisplaybreaks \begin{align}
2\phi_{R}(q)-2A_{2}(-q^2)
&=2q\frac{\Theta_{2}^7\Theta_{8}^4}{\Theta_{1}^4\Theta_{4}^6},
\label{equation:Rphi-id-1}\\
2\phi_{R}(q)+\frac{1}{2}\mu_{2}(q^2)
&=\frac{1}{2}\frac{\Theta_{2}^3\Theta_{4}^6}{\Theta_{1}^4\Theta_{8}^4},
\label{equation:Rphi-id-2}\\
2\phi_{R}(q)+\frac{1}{4}\mu_{2}(q^2)-A_{2}(-q^2)
&=\frac{1}{4}\frac{\Theta_{2}^{17}}{\Theta_{1}^8\Theta_{4}^8}.
\label{equation:Rphi-id-3}
\end{align}}%
\end{theorem}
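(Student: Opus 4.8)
The plan is to prove each of the three identities in Theorem~\ref{theorem:3Rphi-ids} by the same mechanism advertised in the introduction: convert the mock theta function $\phi_R(q)$ into its Appell function form, regroup using the Appell function splitting machinery, and reduce everything to a theta function identity that can be checked mechanically. Concretely, from (\ref{equation:misc-RLNp3}) we have $\phi_R(q)=-\tfrac12 m(1,q;q^2)$, and from (\ref{equation:2nd-A(q)}) and (\ref{equation:2nd-mu(q)}) we have the Appell forms $A_2(q)=-m(q,q^2;q^4)$ and $\mu_2(q)=4m(-q,-1;q^4)-\Theta_{2,4}^4/\Theta_1^3$. The telltale sign of the relevant specialization is the presence of arguments at two different $q$-scales (the plain $q^2$ scale of $m(1,q;q^2)$ versus the $q^2\mapsto (q^2)^4=q^8$ scale after the substitution $q\mapsto q^2$ in $A_2$ and $\mu_2$): this is exactly the $n=2$, $x=1$ case of Theorem~\ref{theorem:msplit-general-n} that the text flags at the opening of Section~\ref{section:additional-results}.

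First I would write down $D_2(1,z,z';q^2)$ explicitly, i.e.\ specialize (\ref{equation:Dn-def}) at $n=2$, $x=1$, with the base $q$ replaced by $q^2$. By (\ref{equation:Dn-def}) this expresses $m(1,z;q^2)$ minus a two-term sum of $m$-functions at the coarse scale $q^8$ as the single-quotient right-hand side given in Theorem~\ref{theorem:msplit-general-n}. The idea is to choose $z$ so that the left piece $m(1,z;q^2)$ matches $-2\phi_R(q)=m(1,q;q^2)$, forcing $z=q$, and so that the two coarse-scale terms $m(-q^{\binom{2}{2}-2r}\cdot 1,z';q^8)=m(-q^{1-2r},z';q^8)$ for $r=0,1$ reproduce, up to the $n=1$ change-of-$z$ identity (\ref{equation:changing-z}), the combinations of $A_2(-q^2)$ and $\mu_2(q^2)$ appearing in (\ref{equation:Rphi-id-1})--(\ref{equation:Rphi-id-3}). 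Here the auxiliary parameter $z'$ is a free handle: different admissible choices of $z'$ should produce the three different linear combinations on the left-hand sides, which explains why one gets a family of three identities rather than a single one. Any mismatch between the $m$-function $z$-arguments coming out of $D_2$ and those demanded by the definitions of $A_2$ and $\mu_2$ gets absorbed by (\ref{equation:changing-z}), which contributes an extra theta quotient that must be folded into the final right-hand side.

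After this regrouping, each identity is reduced to a pure theta function identity: the claim that a certain sum of quotients of theta functions (the $r=0,1$ terms from Theorem~\ref{theorem:msplit-general-n}, plus the correction terms from (\ref{equation:changing-z}) and the $\Theta_{2,4}^4/\Theta_1^3$ tail in $\mu_2$) collapses to the single monomial-in-$\Theta$ right-hand sides displayed in (\ref{equation:Rphi-id-1})--(\ref{equation:Rphi-id-3}). In principle one verifies these by repeated application of the three-term Weierstrass relation (\ref{equation:3termWeier}); in practice, as the introduction warns for the analogous sixth-order case, the resulting theta identities are intricate, so I would instead confirm them using Garvan's Maple $q$-series and theta packages \cite{FG}, exactly as done in Sections~\ref{section:tech3} and~\ref{section:tech2}. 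I expect the main obstacle to be the bookkeeping of the theta-quotient reduction rather than any conceptual difficulty: getting the modulus conversions right (passing between $\Theta(\cdot;q^2)$, $\Theta(\cdot;q^4)$, $\Theta(\cdot;q^8)$ and the abbreviations $\Theta_m$, $\overline{\Theta}_{a,m}$), tracking the half-integer powers of $q$ that arise from $\binom{r}{2}$ and $\binom{n}{2}$ with $n=2$, and correctly splitting the $\mu_2$ definition so that its theta tail combines cleanly with the $D_2$ output to yield the stated closed forms. Once the correct $(z,z')$ assignments are pinned down and the modulus normalizations are fixed, each of the three identities should fall to a single finite theta verification.
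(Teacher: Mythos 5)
Your proposal matches the paper's proof in essentially every respect: the paper writes $-2\phi_{R}(q)+2A_{2}(-q^2)=D_{2}(1,q,q^4;q^2)$ and $-2\phi_{R}(q)-\tfrac12\mu_{2}(q^2)=D_{2}(1,q,q^{-2};q^2)$ exactly as you predict, evaluates these two $D_2$'s via Corollary \ref{corollary:msplitn2zprime} and elementary theta manipulations in Proposition \ref{proposition:3Rphi-ids}, and disposes of the remaining theta identity with Garvan's package. The one small correction is that the third identity (\ref{equation:Rphi-id-3}) does not come from a third choice of $z'$ --- it involves three distinct coarse-scale Appell values $m(-q^2,\cdot\,;q^8)$, which is one too many for a single $D_2$ --- but is obtained simply by adding (\ref{equation:Rphi-id-1}) and (\ref{equation:Rphi-id-2}) and collapsing the resulting theta quotients via (\ref{equation:Rphi-id-3-thetaID}).
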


\begin{theorem}\label{theorem:3Rxi-ids} We have
{\allowdisplaybreaks \begin{align}
\frac{1}{2}\xi_{R}(q)+2A_2(-q)&=\frac{1}{4}\frac{\Theta_{1}^5}{\Theta_{2}^4},
\label{equation:3xi-id-1}\\
\frac{1}{2}\xi_{R}(q)-\frac{1}{2}\mu_2(q)&=-\frac{1}{4}\frac{\Theta_{1}^5}{\Theta_{2}^4},
\label{equation:3xi-id-2}\\
\frac{1}{2}\xi_{R}(q)-\frac{1}{4}\mu_2(q)+A_2(-q)&=0.
\label{equation:3xi-id-3}
\end{align}}%
\end{theorem}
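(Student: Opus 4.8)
The plan is to follow the strategy the paper lays out for its main results: express each left-hand side through the Appell forms (\ref{equation:misc-RLNp4}), (\ref{equation:2nd-A(q)}), (\ref{equation:2nd-mu(q)}), regroup the resulting Appell--Lerch sums as a single $D_2(1,z,z';q)$ from (\ref{equation:Dn-def}), and then invoke the $n=2$, $x=1$ specialization of Theorem \ref{theorem:msplit-general-n} to replace it by a sum of theta quotients. A preliminary observation streamlines everything: the three identities are linearly dependent. Adding (\ref{equation:3xi-id-1}) and (\ref{equation:3xi-id-2}) makes the right-hand sides cancel and yields $\xi_R(q)+2A_2(-q)-\tfrac12\mu_2(q)=0$, which is exactly twice (\ref{equation:3xi-id-3}). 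Hence it suffices to prove (\ref{equation:3xi-id-1}) and (\ref{equation:3xi-id-2}), after which (\ref{equation:3xi-id-3}) is their average.

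For (\ref{equation:3xi-id-1}) I would take $z=-1$ and $z'=q^2$. Writing out $D_2(1,-1,q^2;q)=m(1,-1;q)-m(-q,q^2;q^4)+q^{-1}m(-q^{-1},q^2;q^4)$ and applying the standard inversion $m(x,z;q)=x^{-1}m(x^{-1},z^{-1};q)$ together with the $z$-periodicity $m(x,qz;q)=m(x,z;q)$ (both among the Appell properties recalled in Section \ref{section:prelim}), the last term collapses onto the middle one and gives $D_2(1,-1,q^2;q)=m(1,-1;q)-2m(-q,q^2;q^4)$. By (\ref{equation:misc-RLNp4}) and (\ref{equation:2nd-A(q)}) this is precisely $\tfrac12\xi_R(q)+2A_2(-q)$, the left-hand side of (\ref{equation:3xi-id-1}), and Theorem \ref{theorem:msplit-general-n} rewrites it as the $r=0,1$ sum of theta quotients.

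For (\ref{equation:3xi-id-2}) I would use the second Appell expression for $\mu_2$ from (\ref{equation:2nd-mu(q)}), namely $\mu_2(q)=4m(-q,-1;q^4)-\Theta_{2,4}^4/\Theta_1^3$, so that $\tfrac12\xi_R(q)-\tfrac12\mu_2(q)=m(1,-1;q)-2m(-q,-1;q^4)+\tfrac12\Theta_{2,4}^4/\Theta_1^3$. Taking $z=z'=-1$ and applying the same two Appell properties gives $D_2(1,-1,-1;q)=m(1,-1;q)-2m(-q,-1;q^4)$, so the left-hand side of (\ref{equation:3xi-id-2}) equals $D_2(1,-1,-1;q)+\tfrac12\Theta_{2,4}^4/\Theta_1^3$. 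Again Theorem \ref{theorem:msplit-general-n} turns the $D_2$ term into a short theta sum.

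The main obstacle is the resulting theta-function identities. In each case Theorem \ref{theorem:msplit-general-n} produces a sum of two quotients (the $r=0$ and $r=1$ terms), and one must show that this sum, together with the correction $\tfrac12\Theta_{2,4}^4/\Theta_1^3$ in the second case, collapses to the single quotient $\pm\tfrac14\Theta_1^5/\Theta_2^4$. As the paper warns for (\ref{equation:RLN6-A}), such a collapse need not follow from the three-term Weierstrass relation (\ref{equation:3termWeier}) alone, so I expect to verify these identities with Frank Garvan's Maple theta package \cite{FG}, exactly as in Sections \ref{section:tech3} and \ref{section:tech2}. Once (\ref{equation:3xi-id-1}) and (\ref{equation:3xi-id-2}) are established, (\ref{equation:3xi-id-3}) is immediate from the linear relation above, its vanishing right-hand side reflecting the exact cancellation of the two theta quotients.
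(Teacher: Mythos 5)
Your proposal is correct in outline and, for (\ref{equation:3xi-id-1}) and (\ref{equation:3xi-id-3}), coincides with the paper's proof: the paper likewise shows $\tfrac12\xi_R(q)+2A_2(-q)=D_2(1,-1,q^2;q)$ and obtains (\ref{equation:3xi-id-3}) by adding the first two identities. The one place you diverge is (\ref{equation:3xi-id-2}). The paper keeps the two-term Appell form $\mu_2(q)=2m(-q,-1;q^4)+2m(-q,q;q^4)$ and chooses $z'=q^{-1}$, giving $\tfrac12\xi_R(q)-\tfrac12\mu_2(q)=D_2(1,-1,q^{-1};q)$ with no correction term; you instead use the single-Appell form of (\ref{equation:2nd-mu(q)}) and $z'=-1$, picking up the extra $\tfrac12\Theta_{2,4}^4/\Theta_1^3$. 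Both are legitimate, but the paper's choices of $z'$ are tuned so that in each of $D_2(1,-1,q^2;q)$ and $D_2(1,-1,q^{-1};q)$ one of the two theta quotients in (\ref{equation:msplit2}) contains a factor $\Theta(1;q^2)$ or $\Theta(1;q^4)$ and vanishes identically, so each $D_2$ collapses to a single quotient by elementary product rearrangements (Proposition \ref{proposition:3Rxi-ids}) and no Maple verification is needed. With your $z'=-1$ neither term vanishes, so you are left with a genuine two-quotient-plus-correction theta identity to check; this is provable (by (\ref{equation:3termWeier}) or by Garvan's package) but costs more than the paper's route. If you adopt $z'=q^{-1}$ and the two-term form of $\mu_2$, your argument becomes the paper's proof verbatim.
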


\begin{theorem} \label{theorem:4psi-ids} We have
{\allowdisplaybreaks \begin{align}
\psi(q)-U_{0}(q^3)&=-\frac{\Theta_{1}\Theta_{6}^4\Theta_{8}^2\Theta_{12}}
{\Theta_{2}^2\Theta_{3}^2\Theta_{4}\Theta_{24}^2},
\label{equation:psi-id-1}\\
\psi(q)+2U_{1}(q^3)&=q \frac{\Theta_{1}\Theta_{4}^5\Theta_{6}^6\Theta_{24}^2}
{\Theta_{2}^4\Theta_{3}^2\Theta_{8}^2\Theta_{12}^5},
\label{equation:psi-id-2}\\
2\psi_{\_}(q)+U_0(q^3)&=
\frac{\Theta_{2}\Theta_{4}^{2}\Theta_{6}^3}{\Theta_{1}^2\Theta_{3}\Theta_{8}\Theta_{24}},
\label{equation:psiBar-id-1}\\
2\psi_{\_}(q)-2U_1(q^3)&=2q\frac{\Theta_2^2\Theta_6^4\Theta_8\Theta_{24}}{\Theta_{1}^2\Theta_{3}\Theta_{4}\Theta_{12}^3}.
\label{equation:psiBar-id-2}
\end{align}}%
\end{theorem}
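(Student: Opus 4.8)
The plan is to follow the same strategy used for the six tenth-order identities, but now driven by the $n=2$, $x=1$ specialization of Theorem~\ref{theorem:msplit-general-n}. Substituting the Appell forms (\ref{equation:6th-psi(q)}), (\ref{equation:psibar}), (\ref{equation:8th-U0(q)}), (\ref{equation:8th-U1(q)}), all four left-hand sides become combinations of $m(1,z;q^3)$ with $z\in\{-q,q\}$ and of $m(-q^3,z';q^{12})$ with $z'\in\{-1,-q^6\}$; since $q^{12}=(q^3)^{2^2}$, the natural base is $q^3$. Setting $x=1$ and $q\mapsto q^3$ in (\ref{equation:Dn-def}) gives
\[
D_2(1,z,z';q^3)=m(1,z;q^3)-m(-q^3,z';q^{12})+q^{-3}m(-q^{-3},z';q^{12}),
\]
so the four identities occupy the four nodes of the grid $(z,z')\in\{-q,q\}\times\{-1,-q^6\}$: identities (\ref{equation:psi-id-1}) and (\ref{equation:psi-id-2}) use $z=-q$ (the function $\psi$), while (\ref{equation:psiBar-id-1}) and (\ref{equation:psiBar-id-2}) use $z=q$ (the function $\psi_{\_}$), and the choice $z'=-1$ produces the $U_0$ identities whereas $z'=-q^6$ produces the $U_1$ identities.

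Next I would use the symmetries of $m(x,z;q)$ recalled in Section~\ref{section:prelim} to align the residual $r=1$ term $q^{-3}m(-q^{-3},z';q^{12})$ with the eighth-order functions. The reflection symmetry $m(x,z;q)=x^{-1}m(x^{-1},z^{-1};q)$ gives $q^{-3}m(-q^{-3},z';q^{12})=-m(-q^3,z'^{-1};q^{12})$. For $z'=-1$ (so that $z'^{-1}=-1$) this closes up exactly, yielding $\psi(q)-U_0(q^3)=D_2(1,-q,-1;q^3)$ and, using the non-mock part of (\ref{equation:psibar}), $2\psi_{\_}(q)+U_0(q^3)=-D_2(1,q,-1;q^3)+q\Theta_6^3/(\Theta_1\Theta_2)$. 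For $z'=-q^6$ one instead obtains $-m(-q^3,-q^{-6};q^{12})$, which differs from the desired $-m(-q^3,-q^6;q^{12})$ by the explicit quotient supplied by the changing-$z$ relation (\ref{equation:changing-z}); hence (\ref{equation:psi-id-2}) and (\ref{equation:psiBar-id-2}) equal $\pm D_2(1,z,-q^6;q^3)$ plus a known theta quotient.

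With each left-hand side written as $\pm D_2(1,z,z';q^3)$ plus an explicit theta quotient, I would then invoke Theorem~\ref{theorem:msplit-general-n} with $n=2$ to replace $D_2$ by its closed form, namely the sum of the $r=0$ and $r=1$ theta quotients at base $q^3$. Each of (\ref{equation:psi-id-1})--(\ref{equation:psiBar-id-2}) is thereby reduced to a pure theta-function identity asserting that this two-term sum, together with the correction quotients, collapses to the single quotient displayed on the right-hand side.

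The main obstacle is proving these final theta identities. In principle they follow from repeated application of the three-term Weierstrass relation (\ref{equation:3termWeier}), exactly as in the tenth-order case; but the right-hand sides here carry theta constants at many moduli (up to $\Theta_{24}$), so, as already occurs for (\ref{equation:RLN6-A}), a direct reduction by hand is unwieldy. I would therefore verify each resulting theta identity with Frank Garvan's Maple packages, using the machinery assembled in Sections~\ref{section:tech3} and \ref{section:tech2}.
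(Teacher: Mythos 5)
Your proposal is correct and follows the paper's proof essentially step for step: each left-hand side is rewritten as $\pm D_2(1,\pm q,z';q^3)$ with $(z,z')$ running over $\{-q,q\}\times\{-1,-q^6\}$ (plus the extra $q\Theta_6^3/(\Theta_1\Theta_2)$ from the non-mock part of (\ref{equation:psibar}) in the $\psi_{\_}$ cases), Corollary \ref{corollary:msplitn2zprime} turns each $D_2$ into a two-term theta sum, and the resulting theta identities are verified with Garvan's packages, exactly as in Proposition \ref{proposition:4psi-ids} and the surrounding lemmas. The one small slip is in the $z'=-q^6$ case: no changing-$z$ correction arises there, since $-q^{6}=q^{12}\cdot(-q^{-6})$ and (\ref{equation:mxqz-fnq-z}) gives $m(-q^3,-q^{-6};q^{12})=m(-q^3,-q^{6};q^{12})$ outright (the would-be quotient from (\ref{equation:changing-z}) carries the factor $\Theta(q^{12};q^{12})=0$), which is precisely why $\psi(q)+2U_1(q^3)=D_2(1,-q,-q^{6};q^3)$ holds with no extra term.
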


\section{Preliminaries}\label{section:prelim}

We will frequently use the following identities without mention.  They easily follow from the definitions.
{\allowdisplaybreaks \begin{subequations}
\begin{gather}
\overline{\Theta}_{0,1}=2\overline{\Theta}_{1,4}=\frac{2\Theta_2^2}{\Theta_1},  \ 
\overline{\Theta}_{1,2}=\frac{\Theta_2^5}{\Theta_1^2\Theta_4^2}, \ 
  \Theta_{1,2}=\frac{\Theta_1^2}{\Theta_2},   \ \overline{\Theta}_{1,3}=\frac{\Theta_2\Theta_3^2}{\Theta_1\Theta_6},
   \notag\\
\Theta_{1,4}=\frac{\Theta_1\Theta_4}{\Theta_2},  
\  \Theta_{1,6}=\frac{\Theta_1\Theta_6^2}{\Theta_2\Theta_3},   \ 
\overline{\Theta}_{1,6}=\frac{\Theta_2^2\Theta_3\Theta_{12}}{\Theta_1\Theta_4\Theta_6}.\notag
\end{gather}
\end{subequations}}%

We have the general identities:
\begin{subequations}
{\allowdisplaybreaks \begin{gather}
\Theta(q^n x;q)=(-1)^nq^{-\binom{n}{2}}x^{-n}\Theta(x;q), \ \ n\in\mathbb{Z},\label{equation:theta-elliptic}\\
\Theta(x;q)=\Theta(q/x;q)=-x\Theta(x^{-1};q)\label{equation:theta-inv},\\
\Theta(x;q)={\Theta_1}\Theta(x;q^2)\Theta(qx;q^2)/{\Theta_2^2}, \label{equation:theta-mod}\\
\Theta(z;q)=\Theta(-z^2q;q^4)-z\Theta(-z^2q^3;q^4),\label{equation:theta-split}\\
\Theta(x^2;q^2)={\Theta_2}\Theta(x;q)\Theta(-x;q)/{\Theta_1^2}. \label{equation:theta-roots}
\end{gather}}%
\end{subequations}

We also have the following \cite[Theorem 1.1]{H1}:  For generic $x,y\in \mathbb{C}^*$ 
\begin{equation}
\Theta(x;q)\Theta(y;q)=\Theta(-xy;q^2)\Theta(-qx^{-1}y;q^2)-x \Theta(-qxy;q^2) \Theta(-x^{-1}y;q^2).
\label{equation:H1Thm1.1}
\end{equation}

The Appell function $m(x,q,z)$ satisfies several functional equations and identities, which we collect in the form of a proposition \cite{HM, Zw2}:  

\begin{proposition}  For generic $x,z\in \mathbb{C}^*$
{\allowdisplaybreaks \begin{subequations}
\begin{gather}
m(x,z;q)=m(x,qz;q),\label{equation:mxqz-fnq-z}\\
m(x,z;q)=x^{-1}m(x^{-1},z^{-1};q),\label{equation:mxqz-flip}\\
m(x,z;q)=m(x,x^{-1}z^{-1};q).\label{equation:mxqz-fnq-newz}
\end{gather}
\end{subequations}}
\end{proposition}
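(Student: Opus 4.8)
The plan is to verify each functional equation by starting from the defining series (\ref{equation:mdef-eq}) and performing a single summation-index shift, calling on the elliptic and inversion properties (\ref{equation:theta-elliptic}), (\ref{equation:theta-inv}) of $\Theta$ to handle the prefactor $1/\Theta(\cdot;q)$.

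For (\ref{equation:mxqz-fnq-z}), I would substitute $z\mapsto qz$ in (\ref{equation:mdef-eq}). The prefactor transforms by the $n=1$ case of (\ref{equation:theta-elliptic}), namely $\Theta(qz;q)=-z^{-1}\Theta(z;q)$, while in the summand $q^{\binom{r}{2}}(qz)^{r}=q^{\binom{r+1}{2}}z^{r}$ and the denominator becomes $1-q^{r}xz$. Reindexing by $s=r+1$, so that $q^{\binom{r+1}{2}}=q^{\binom{s}{2}}$, $z^{r}=z^{s-1}$, and $1-q^{r}xz=1-q^{s-1}xz$, reproduces $\Theta(z;q)m(x,z;q)$ exactly, the extraneous factors of $-z^{\pm1}$ cancelling.

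For (\ref{equation:mxqz-flip}), I would replace $(x,z)$ by $(x^{-1},z^{-1})$ in (\ref{equation:mdef-eq}). The prefactor is handled by the inversion rule (\ref{equation:theta-inv}) in the form $\Theta(z^{-1};q)=-z^{-1}\Theta(z;q)$, and the key manipulation is to rewrite the summand's denominator as $1-q^{r-1}x^{-1}z^{-1}=-q^{r-1}x^{-1}z^{-1}\big(1-q^{1-r}xz\big)$, so that $\tfrac{1}{1-q^{r-1}x^{-1}z^{-1}}=\tfrac{-q^{1-r}xz}{1-q^{1-r}xz}$. Using the elementary identity $\binom{r}{2}+1-r=\binom{r-1}{2}$ and reindexing by $s=1-r$ then collapses everything to $x\,\Theta(z;q)m(x,z;q)$, yielding $m(x^{-1},z^{-1};q)=x\,m(x,z;q)$, which is (\ref{equation:mxqz-flip}).

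The reflection (\ref{equation:mxqz-fnq-newz}) is the one genuinely different step and the main obstacle: unlike the first two, it is \emph{not} produced by any reindexing of the defining series, since the reflected sum $m(x,x^{-1}z^{-1};q)$ does not match $m(x,z;q)$ term by term (one checks that shifting the reflected series merely reproduces (\ref{equation:mxqz-flip}) again). Instead I would deduce it from the change-of-$z$ formula (\ref{equation:changing-z}) by specializing $z_{0}=z$ and $z_{1}=x^{-1}z^{-1}$. With this choice the theta factor $\Theta(xz_{0}z_{1};q)$ in the numerator of the right-hand side becomes $\Theta(1;q)$, which vanishes because $\Theta(1;q)=\sum_{n}(-1)^{n}q^{\binom{n}{2}}=0$ (equivalently, the product expansion contains the factor $1-q^{0}=0$). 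Since the remaining theta factors are nonzero for generic $x,z$, the entire right-hand side of (\ref{equation:changing-z}) is identically $0$, so $m(x,x^{-1}z^{-1};q)=m(x,z;q)$, establishing (\ref{equation:mxqz-fnq-newz}).
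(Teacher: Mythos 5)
Your proposal is correct. Note that the paper itself offers no proof of this proposition: it is quoted from \cite{HM, Zw2}, so there is no in-text argument to compare against. Your verifications of (\ref{equation:mxqz-fnq-z}) and (\ref{equation:mxqz-flip}) by reindexing the defining series (\ref{equation:mdef-eq}) are the standard ones and check out: for the second, the factorization $1-q^{r-1}x^{-1}z^{-1}=-q^{r-1}x^{-1}z^{-1}(1-q^{1-r}xz)$ together with $\binom{r}{2}+1-r=\binom{r-1}{2}$ and the reflection of the summation index does collapse the sum to $-xz^{-1}\Theta(z;q)m(x,z;q)$, which against $\Theta(z^{-1};q)=-z^{-1}\Theta(z;q)$ gives $m(x^{-1},z^{-1};q)=x\,m(x,z;q)$. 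Your observation that (\ref{equation:mxqz-fnq-newz}) is \emph{not} a reindexing identity is also accurate, and deducing it from (\ref{equation:changing-z}) with $z_{0}=z$, $z_{1}=x^{-1}z^{-1}$ via the vanishing of $\Theta(xz_{0}z_{1};q)=\Theta(1;q)$ is exactly how it is obtained in the cited sources; there is no circularity, since the change-of-$z$ formula is proved there by an independent ellipticity/residue argument that does not invoke (\ref{equation:mxqz-fnq-newz}). The one cosmetic slip is that you drop the $(-1)^{r}$ from the summand when quoting it, but your bookkeeping of the resulting signs is nonetheless consistent, so the argument stands as written.
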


We point out the  $n=2$ and $n=3$ specializations of  Theorem \ref{theorem:msplit-general-n}:

\begin{corollary} \label{corollary:msplitn2zprime} For generic $x,z,z'\in \mathbb{C}^*$ 
{\allowdisplaybreaks \begin{align}
D_2&(x,z,z';q) \label{equation:msplit2}\\
&=\frac{z'(q^2;q^2)_{\infty}^3}{\Theta(xz;q)\Theta(z';q^4)}\Big [
\frac{\Theta(-qx^2zz';q^2)\Theta(z^2/z';q^{4})}{\Theta(-qx^2z';q^2)\Theta(z;q^2)}
-xz \frac{\Theta(-q^2x^2zz';q^2)\Theta(q^2z^2/z';q^{4})}{\Theta(-qx^2z';q^2)\Theta(qz;q^2)}\Big ],\notag
\end{align}}%
where
\begin{equation}
D_2(x,z,z';q):=m(x,z;q)-m(-qx^2,z';q^4 )+q^{-1}xm(-q^{-1}x^2,z';q^4).\label{equation:D2-def}
\end{equation}
\end{corollary}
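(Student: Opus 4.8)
The plan is to obtain Corollary \ref{corollary:msplitn2zprime} as the direct $n=2$ specialization of Theorem \ref{theorem:msplit-general-n}. First I would set $n=2$ in the definition (\ref{equation:Dn-def}) of $D_n$ and record what the sum over $r\in\{0,1\}$ produces. With $\binom{n}{2}=1$ and $(-x)^n=x^2$, the $r=0$ term is $m\big({-}q\,x^2, z'; q^4\big)$ and the $r=1$ term carries the prefactor $q^{-\binom{2}{2}}(-x)=-q^{-1}x$ together with $m\big({-}q^{1-2}x^2,z';q^4\big)=m\big({-}q^{-1}x^2,z';q^4\big)$. Collecting signs gives exactly
\begin{equation*}
D_2(x,z,z';q)=m(x,z;q)-m(-qx^2,z';q^4)+q^{-1}x\,m(-q^{-1}x^2,z';q^4),
\end{equation*}
which matches (\ref{equation:D2-def}). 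This confirms that the left-hand side of (\ref{equation:msplit2}) is the specialization claimed.

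Next I would substitute $n=2$ into the right-hand side of Theorem \ref{theorem:msplit-general-n}, again summing $r$ over $\{0,1\}$. Here $\Theta_n^3=\Theta_2^3=(q^2;q^2)_\infty^3$ (using $\Theta_m=\prod_{i\ge1}(1-q^{mi})$), and $n^2=4$, $q^{\binom{n}{2}+r}=q^{1+r}$, $q^{nr}=q^{2r}$, and $z^n/z'=z^2/z'$. The common denominator factor $\Theta(xz;q)\Theta(z';q^4)$ pulls out front. For $r=0$ one gets a factor $q^{\binom{0}{2}}(-xz)^0=1$ and the theta quotient
\begin{equation*}
\frac{\Theta(-q\,x^2 z z';q^2)\,\Theta(z^2/z';q^4)}{\Theta(-q\,x^2 z';q^2)\,\Theta(z;q^2)},
\end{equation*}
while for $r=1$ the factor $q^{\binom{1}{2}}(-xz)^1=-xz$ multiplies
\begin{equation*}
\frac{\Theta(-q^2 x^2 z z';q^2)\,\Theta(q^2 z^2/z';q^4)}{\Theta(-q\,x^2 z';q^2)\,\Theta(q z;q^2)}.
\end{equation*}
Assembling these two summands inside the bracket, with the shared prefactor $z'(q^2;q^2)_\infty^3/\big(\Theta(xz;q)\Theta(z';q^4)\big)$ out front, reproduces (\ref{equation:msplit2}) verbatim.

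The only real bookkeeping is verifying that the $\Theta$-arguments in the general formula collapse correctly at $n=2$; in particular one must check that the factor $\Theta\big(-q^{\binom{n}{2}}(-x)^n z';q^n\big)$ in the denominator becomes $\Theta(-q x^2 z';q^2)$, which is common to both $r=0$ and $r=1$ terms and is therefore what allows it to be written once inside the bracket rather than inside the sum. I expect this argument-matching step to be the main (and essentially the only) obstacle, since it requires carefully tracking the exponents $\binom{r}{2}$, $\binom{n}{2}+r$, $nr$, and $r$ as $r$ ranges over $\{0,1\}$ and confirming that no extraneous powers of $q$ or sign errors arise. Once the term-by-term correspondence is established, the corollary follows immediately with no further identities needed, so the proof is a short and mechanical specialization rather than an independent derivation.
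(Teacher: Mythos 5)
Your proposal is correct and follows exactly the paper's route: the paper presents Corollary \ref{corollary:msplitn2zprime} purely as the $n=2$ specialization of Theorem \ref{theorem:msplit-general-n}, offering no further argument, and your term-by-term bookkeeping for $r\in\{0,1\}$ (both in (\ref{equation:Dn-def}) and in the theta-quotient sum) checks out, including the common denominator factor $\Theta(-qx^2z';q^2)$.
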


\begin{corollary} \label{corollary:msplitn3zprime} For generic $x,z,z'\in \mathbb{C}^*$ 
\begin{align}
D_3(x,q,z,z')&=\frac{z'\Theta_3^3}{\Theta(xz;q)\Theta(z';q^{9})\Theta(x^3z';q^3)}\Big [ 
\frac{1}{z}\frac{\Theta(x^3zz';q^3)\Theta(z^3/z';q^{9})}{\Theta(z;q^3)}\label{equation:msplit3} \\
&\ \ \ \ \ -\frac{x}{q}\frac{\Theta(qx^3zz';q^3)\Theta(q^{3}z^3/z';q^{9})}{\Theta(qz;q^3)}
+\frac{x^2z}{q}\frac{\Theta(q^2x^3zz';q^3)\Theta(q^{6}z^3/z';q^{9})}{\Theta(q^2z;q^3)}\Big ],\notag
\end{align}
where
\begin{align}
D_3(x,z,z';q)&:=m(x,z;q)-m\Big (q^{3}x^3,z';q^{9}\Big )\label{equation:D3-def}\\
&\ \ \ \ \  +q^{-1}xm\Big (x^3,z';q^{9}\Big )-q^{-3}x^2m\Big (q^{-3}x^3,z';q^{9}\Big ).\notag
\end{align}
\end{corollary}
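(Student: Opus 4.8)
The plan is to derive Corollary \ref{corollary:msplitn3zprime} as the direct $n=3$ specialization of Theorem \ref{theorem:msplit-general-n}; the only work is elementary bookkeeping with the elliptic transformation (\ref{equation:theta-elliptic}). First I would check that (\ref{equation:D3-def}) agrees with (\ref{equation:Dn-def}) at $n=3$. Since $\binom{3}{2}=3$ and $(-x)^3=-x^3$, the $r$-th summand of (\ref{equation:Dn-def}) carries weight $q^{-\binom{r+1}{2}}(-x)^r$ and Appell argument $-q^{3-3r}(-x)^3 = q^{3-3r}x^3$, producing for $r=0,1,2$ the functions $m(q^3x^3,z';q^9)$, $m(x^3,z';q^9)$, $m(q^{-3}x^3,z';q^9)$ with weights $1$, $-q^{-1}x$, $q^{-3}x^2$; after the leading minus sign this is (\ref{equation:D3-def}) term by term.

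Next I would expand the right-hand side of Theorem \ref{theorem:msplit-general-n} at $n=3$, where the $r$-independent denominator factor is $\Theta(-q^3(-x)^3z';q^3)=\Theta(q^3x^3z';q^3)$. Applying (\ref{equation:theta-elliptic}) gives $\Theta(q^3x^3z';q^3)=-(x^3z')^{-1}\Theta(x^3z';q^3)$, which is exactly the step that turns the theorem's prefactor into the prefactor $z'\Theta_3^3/[\Theta(xz;q)\Theta(z';q^9)\Theta(x^3z';q^3)]$ of (\ref{equation:msplit3}), at the cost of an extra monomial $-x^3z'$ that I then distribute into the three summands.

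The last step is to simplify each summand $r=0,1,2$. Applying (\ref{equation:theta-elliptic}) once more to the numerator theta, I would write $q^{3+r}=q^3\cdot q^r$ so that $\Theta(-q^{3+r}(-x)^3zz';q^3)=\Theta(q^{3+r}x^3zz';q^3)=-(q^rx^3zz')^{-1}\Theta(q^rx^3zz';q^3)$. Multiplying the resulting monomial by the summand weight $q^{\binom{r}{2}}(-xz)^r$ and by the distributed factor $-x^3z'$, the powers of $x$, $z$, $q$ collapse to $q^{\binom{r}{2}-r}(-x)^rz^{r-1}$, i.e.\ to $z^{-1}$, $-x/q$, $x^2z/q$ for $r=0,1,2$, while the surviving thetas are $\Theta(q^rx^3zz';q^3)\Theta(q^{3r}z^3/z';q^9)/\Theta(q^rz;q^3)$. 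These are precisely the three bracketed terms of (\ref{equation:msplit3}), completing the proof.

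There is no genuine obstacle: the corollary is a specialization, not a new theorem, and the only care required is in tracking the signs and the $q$-powers generated by the repeated use of (\ref{equation:theta-elliptic}).
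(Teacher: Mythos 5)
Your derivation is correct and is exactly the route the paper intends: the paper states Corollary \ref{corollary:msplitn3zprime} without proof as the direct $n=3$ specialization of Theorem \ref{theorem:msplit-general-n}, and your bookkeeping with (\ref{equation:theta-elliptic}) — yielding the monomials $q^{\binom{r}{2}-r}(-x)^r z^{r-1}$, i.e.\ $1/z$, $-x/q$, $x^2z/q$ — checks out term by term, as does the matching of (\ref{equation:D3-def}) with (\ref{equation:Dn-def}).
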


\section{Technical Results for $n=3$}\label{section:tech3}
\begin{lemma} \label{lemma:dTerm-id1} We have
{\allowdisplaybreaks \begin{align}
D_3(1,-q,-q^9;q^3)
&=\frac{\Theta_{9}^3}{\overline{\Theta}_{1,3}\overline{\Theta}_{9,27}\overline{\Theta}_{0,9}}
\Big [ 
q\frac{\Theta_{1,9}\Theta_{6,27}}{\overline{\Theta}_{1,9}}
-q^{2}\frac{\Theta_{4,9}\Theta_{3,27}}{\overline{\Theta}_{4,9}}
-\frac{\Theta_{7,9}\Theta_{12,27}}{\overline{\Theta}_{7,9}} \Big ],
\label{equation:dTerm-id1}\\
D_3(1,q,q^9;q^6)
&= 
-\frac{\Theta_{18}^3}{\Theta_{1,6}\Theta_{9,54}\Theta_{9,18}}
\Big [ q^{2} \frac{\Theta_{10,18}\Theta_{6,54}}{\Theta_{1,18}}
 +q^{3} \frac{\Theta_{16,18}\Theta_{12,54}}{\Theta_{7,18}}
+\frac{\Theta_{4,18}\Theta_{30,54}}{\Theta_{13,18}}  \Big ],
\label{equation:dTerm-id2}\\
D_3(1,-q^2,-q^{27};q^6)
&=\frac{\Theta_{18}^3}{\overline{\Theta}_{2,6}\overline{\Theta}_{27,54}\overline{\Theta}_{9,18}}
 \Big [ q^{2}\frac{\Theta_{11,18}\Theta_{21,54}}{\overline{\Theta}_{2,18}}
+q^{10}\frac{\Theta_{17,18}\Theta_{3,54}}{\overline{\Theta}_{8,18}}
+q^{4}\frac{\Theta_{5,18}\Theta_{15,54}}{\overline{\Theta}_{14,18}}
\Big ],
\label{equation:dTerm-id3A}\\
D_3(1,-q,-1;q^6)
&=\frac{\Theta_{18}^3}{\overline{\Theta}_{1,6}\overline{\Theta}_{0,54}\overline{\Theta}_{0,18}}
 \Big [ q^{-1}\frac{\Theta_{1,18}\Theta_{3,54}}{\overline{\Theta}_{1,18}}
+q^{-6}\frac{\Theta_{7,18}\Theta_{21,54}}{\overline{\Theta}_{7,18}}
+q^{-5}\frac{\Theta_{13,18}\Theta_{39,54}}{\overline{\Theta}_{13,18}}
\Big ],
\label{equation:dTerm-id3B}\\
D_{3}(1,q,-1;q^3)
&=-\frac{\Theta_{9}^3}{\Theta_{1}\overline{\Theta}_{0,27}\overline{\Theta}_{0,9}}
 \Big [ q^{-1}\frac{\overline{\Theta}_{1,9}\overline{\Theta}_{3,27}}{\Theta_{1,9}}
-q^{-3}\frac{\overline{\Theta}_{4,9}\overline{\Theta}_{12,27}}{\Theta_{4,9}}
+q^{-2}\frac{\overline{\Theta}_{7,9}\overline{\Theta}_{21,27}}{\Theta_{7,9}}
\Big ]. 
\label{equation:dTerm-id4}
\end{align}}%
\end{lemma}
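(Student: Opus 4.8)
The plan is to obtain all five identities as direct specializations of the $n=3$ split formula, Corollary \ref{corollary:msplitn3zprime}, taking $x=1$ throughout. Setting $x=1$ annihilates every power of $x$ in \eqref{equation:msplit3}, so the right-hand side collapses to
\begin{equation*}
\frac{z'\Theta_3^3}{\Theta(z;q)\Theta(z';q^{9})\Theta(z';q^3)}\Big[\tfrac{1}{z}\frac{\Theta(zz';q^3)\Theta(z^3/z';q^{9})}{\Theta(z;q^3)}-\tfrac{1}{q}\frac{\Theta(qzz';q^3)\Theta(q^{3}z^3/z';q^{9})}{\Theta(qz;q^3)}+\tfrac{z}{q}\frac{\Theta(q^2zz';q^3)\Theta(q^{6}z^3/z';q^{9})}{\Theta(q^2z;q^3)}\Big].
\end{equation*}
Each identity is then read off by inserting the appropriate triple $(z,z',\text{base})$: namely $(-q,-q^9,q^3)$ for \eqref{equation:dTerm-id1}, $(q,q^9,q^6)$ for \eqref{equation:dTerm-id2}, $(-q^2,-q^{27},q^6)$ for \eqref{equation:dTerm-id3A}, $(-q,-1,q^6)$ for \eqref{equation:dTerm-id3B}, and $(q,-1,q^3)$ for \eqref{equation:dTerm-id4}. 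Since the choice $x=1$ creates no vanishing denominator and each prescribed $z,z'$ avoids the zeros of the theta factors in the denominators, these are legitimate pole-free specializations of the generic identity.

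Next I would reduce every resulting theta factor to the canonical symbols $\Theta_{a,m}=\Theta(q^a;q^m)$ and $\overline\Theta_{a,m}=\Theta(-q^a;q^m)$. With the base equal to $q^3$ (resp.\ $q^6$), a factor such as $\Theta(q^b;q^{3m})$ is brought to $\Theta_{a,3m}$ with $0\le a<3m$ by repeatedly applying the elliptic transformation \eqref{equation:theta-elliptic}, and a factor carrying a negative exponent such as $\Theta(q^{-6};q^{27})$ is first flipped by the inversion rule \eqref{equation:theta-inv}. The three factors of the prefactor denominator $\Theta(z;q)\Theta(z';q^9)\Theta(z';q^3)$ produce exactly the three denominator symbols displayed in each identity---for instance $\overline\Theta_{1,3}\,\overline\Theta_{9,27}\,\overline\Theta_{0,9}$ in \eqref{equation:dTerm-id1}, using $\Theta(-q^9;q^9)=\overline\Theta_{0,9}$---while $\Theta_3^3$ becomes $\Theta_9^3$ or $\Theta_{18}^3$ according to whether the base is $q^3$ or $q^6$. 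Finally the scalar $z'$ standing in front of the bracket is distributed across the three summands and merged with the powers of $q$ thrown off by \eqref{equation:theta-elliptic} and \eqref{equation:theta-inv}, and this is what fixes the bracket coefficients $q$, $-q^2$, $-1$ and their analogues.

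The only real work is the sign and power-of-$q$ bookkeeping, and that is where I expect the plan to demand the most care. As a representative check, in the first bracket term of \eqref{equation:dTerm-id1} the argument $z^3/z'=(-q)^3/(-q^9)=q^{-6}$ is flipped by \eqref{equation:theta-inv} to $-q^{-6}\Theta_{6,27}$, the argument $zz'=q^{10}$ is reduced modulo $q^9$ by \eqref{equation:theta-elliptic} to $-q^{-1}\Theta_{1,9}$, and $\Theta(z;q^3)=\Theta(-q;q^9)=\overline\Theta_{1,9}$; combined with the leading $1/z=-q^{-1}$ and the prefactor's $z'=-q^9$, this produces the coefficient $(-q^9)(-q^{-1})(-q^{-1})(-q^{-6})=q$, in agreement with \eqref{equation:dTerm-id1}. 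The remaining fourteen bracket terms across the five identities are treated identically; the discipline needed throughout is simply to reduce each exponent to its correct residue class and to track the compensating $q$-power at every use of \eqref{equation:theta-elliptic}.
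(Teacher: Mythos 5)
Your proposal is correct and is essentially the paper's own proof: the paper disposes of this lemma in one line by citing Corollary \ref{corollary:msplitn3zprime} together with (\ref{equation:theta-elliptic}), which is exactly the specialization-plus-normalization you carry out (your sample coefficient check for the first term of (\ref{equation:dTerm-id1}) comes out to $+q$ as claimed). The only cosmetic difference is that you also invoke (\ref{equation:theta-inv}) for arguments with negative exponents, which the paper leaves implicit.
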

\begin{proof}[Proof of Lemma \ref{lemma:dTerm-id1}]
We use Corollary \ref{corollary:msplitn3zprime} and (\ref{equation:theta-elliptic}).
\end{proof}

\begin{proposition}\label{proposition:finalTheta-ids}  We have
{\allowdisplaybreaks \begin{align}
\frac{\overline{\Theta}_{3,12}\Theta_{6}^2}{\overline{\Theta}_{1,4}\overline{\Theta}_{9,36}}
&=-D_3(1,-q,-q^9;q^3)
 +\frac{\Theta_{27}^3\Theta_{9}^2}{\overline{\Theta}_{0,27}\overline{\Theta}_{9,27}^3},
\label{equation:finalTheta-id1}\\
q\frac{\Theta_{3,6}\Theta_{3}^2}{\Theta_{1,2}\Theta_{9,18}}
&=-D_3(1,q,q^9;q^6)-\frac{\Theta_{54}^3\Theta_{18,54}^2}{\Theta_{9,54}^3\Theta_{27,54}},
\label{equation:finalTheta-id2}\\
-\frac{\Theta_{3,6}\Theta_{6}^2}{\overline{\Theta}_{1,4}\overline{\Theta}_{9,36}}
&=D_3(1,-q^2,-q^{27};q^6)+D_3(1,-q,-1;q^6) \label{equation:finalTheta-id3} \\
&\qquad \qquad +q^{12}\frac{\Theta_{54}^3\Theta_{9,54}^2}{\overline{\Theta}_{27,54}^2\overline{\Theta}_{18,54}^2}
-q^{-6}\frac{\Theta_{54}^3\Theta_{9,54}^2}{\overline{\Theta}_{0,54}^2\overline{\Theta}_{9,54}^2},
\notag \\
2q\frac{\overline{\Theta}_{3,12}\Theta_{3}^2}{\Theta_{1,2}\Theta_{9,18}}
&=-D_{3}(1,q,-1;q^3)+q\frac{\Theta_{6}^3}{\Theta_{1}\Theta_{2}}
+2q^{9}\frac{\overline{\Theta}_{27,108}^3}{\Theta_{9}\overline{\Theta}_{9,36}}\label{equation:finalTheta-id4} \\
&\qquad -2\frac{\Theta_{27}^3\overline{\Theta}_{9,27}^2}{\Theta_{9}^2\overline{\Theta}_{0,27}\overline{\Theta}_{9,27}} +q^{6}\frac{\Theta_{54}^3}{\Theta_{9}\Theta_{18}}
+q^{-3}\frac{\Theta_{27}^3\overline{\Theta}_{9,27}^2}{\Theta_{9}^2\overline{\Theta}_{0,27}^2}.\notag
\end{align}}%
\end{proposition}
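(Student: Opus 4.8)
The plan is to eliminate the Appell-type quantities $D_3$ entirely and reduce each of the four identities to a statement purely among theta functions, which can then be verified mechanically. The key observation is that Lemma \ref{lemma:dTerm-id1} already expresses each of the five $D_3$-evaluations appearing on the right-hand sides of (\ref{equation:finalTheta-id1})--(\ref{equation:finalTheta-id4}) as an explicit sum of theta quotients. Thus the strategy is simply to substitute those expressions and then confirm that the two sides agree as theta functions.

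More concretely, for (\ref{equation:finalTheta-id1}) I would insert the formula (\ref{equation:dTerm-id1}) for $D_3(1,-q,-q^9;q^3)$; for (\ref{equation:finalTheta-id2}) the formula (\ref{equation:dTerm-id2}) for $D_3(1,q,q^9;q^6)$; for (\ref{equation:finalTheta-id3}) both (\ref{equation:dTerm-id3A}) and (\ref{equation:dTerm-id3B}), for $D_3(1,-q^2,-q^{27};q^6)$ and $D_3(1,-q,-1;q^6)$ respectively; and for (\ref{equation:finalTheta-id4}) the formula (\ref{equation:dTerm-id4}) for $D_3(1,q,-1;q^3)$. After this substitution each identity becomes an equality between a single theta quotient (the left-hand side) and a sum of theta quotients built from $\Theta_9$, $\Theta_{27}$, $\Theta_{54}$ together with various $\Theta_{a,m}$ and $\overline{\Theta}_{a,m}$ with $m\in\{9,18,27,54,108\}$. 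Collecting everything over a common denominator and using the elementary relations (\ref{equation:theta-elliptic})--(\ref{equation:theta-roots}) to homogenize the moduli turns each assertion into a single polynomial identity among generalized theta products.

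At that point the remaining task is to verify a short list of theta-product identities whose largest modulus is $108$. Rather than grinding these out by hand---iterating the three-term Weierstrass relation (\ref{equation:3termWeier}) would be prohibitively long given the number of summands and the large moduli---I would appeal to Garvan's Maple packages for $q$-series and theta functions \cite{FG}, exactly as announced in the introduction. These recast each candidate identity as an equality of modular functions on a suitable congruence subgroup and then, via the valence formula, reduce verification to checking agreement of finitely many $q$-expansion coefficients, which yields a rigorous proof.

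The main obstacle is not conceptual but bookkeeping. One must faithfully record the arguments and moduli produced by Corollary \ref{corollary:msplitn3zprime} under the specializations of Lemma \ref{lemma:dTerm-id1}, track the prefactor powers of $q$ (note for instance the $q^{12}$ and $q^{-6}$ in (\ref{equation:finalTheta-id3}) and the six separate terms in (\ref{equation:finalTheta-id4})), and present each resulting theta identity in a normalized form that the package can ingest. The chief subtlety is ensuring that the chosen theta quotients are genuinely modular functions of the asserted level, so that the valence-formula bound is valid; once the level and the orders of vanishing at the cusps are pinned down, the coefficient check is routine.
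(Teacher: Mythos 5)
Your proposal matches the paper's own proof: the author likewise substitutes the theta-quotient evaluations of Lemma \ref{lemma:dTerm-id1} for each $D_3$ term, normalizes each identity to an equation among modular functions on a congruence subgroup (e.g.\ $\Gamma_1(54)$ for (\ref{equation:finalTheta-id2})), and then invokes Garvan's \emph{qseries}/\emph{thetaids} packages together with the valence formula to reduce the verification to checking finitely many $q$-expansion coefficients. The only cosmetic difference is that the paper divides through by the left-hand side rather than clearing denominators, but the modularity check, cusp/order computation, and coefficient bound you describe are exactly the steps carried out there.
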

\begin{proof}[Proof of Proposition \ref{proposition:finalTheta-ids}]  Frank Garvan's Maple packages {\em qseries} and {\em thetaids} prove all four theta function identities \cite{FG}.  We give a brief description of the process where we use (\ref{equation:finalTheta-id2}) as a running example.  

We first normalize (\ref{equation:finalTheta-id2}) to obtain the equivalent identity
\begin{equation}
g({\tau}):=f_1(\tau)+f_2(\tau)+f_3(\tau)-f_4(\tau)-1=0,\label{equation:finalTheta-id2-normal}
\end{equation}
where
\begin{gather*}
f_1(\tau):=q\frac{\Theta_{1,2}\Theta_{9,18}}{\Theta_{3,6}\Theta_{3}^2}
\frac{\Theta_{18}^3}{\Theta_{1,6}\Theta_{9,54}\Theta_{9,18}}
 \frac{\Theta_{10,18}\Theta_{6,54}}{\Theta_{1,18}},\\
f_2(\tau):=q^2\frac{\Theta_{1,2}\Theta_{9,18}}{\Theta_{3,6}\Theta_{3}^2}
\frac{\Theta_{18}^3}{\Theta_{1,6}\Theta_{9,54}\Theta_{9,18}}
 \frac{\Theta_{16,18}\Theta_{12,54}}{\Theta_{7,18}},\\
f_3(\tau):=\frac{1}{q}\frac{\Theta_{1,2}\Theta_{9,18}}{\Theta_{3,6}\Theta_{3}^2}
\frac{\Theta_{18}^3}{\Theta_{1,6}\Theta_{9,54}\Theta_{9,18}}
\frac{\Theta_{4,18}\Theta_{30,54}}{\Theta_{13,18}}, \quad
f_4(\tau):=\frac{1}{q}\frac{\Theta_{1,2}\Theta_{9,18}}{\Theta_{3,6}\Theta_{3}^2}
\frac{\Theta_{54}^3\Theta_{18,54}^2}{\Theta_{9,54}^3\Theta_{27,54}}.
\end{gather*}


For the first step, one uses \cite[Theorem 18]{Rob} to verify that each $f_{j}(\tau)$ is a modular function on $\Gamma_{1}(54)$ for each $1\le j\le 4$.  For the second step, one uses \cite[Corollary 4]{CKP} to find a set $\mathcal{S}_{54}$ of inequivalent cusps for $\Gamma_{1}(54)$.  One also determines the fan width of each cusp.  For the third step, one uses \cite[Lemma 3.2]{Biag} to calculate the invariant order of each modular function at each of the cusps of $\Gamma_{1}(54)$.  For the fourth step, one uses the Valence Formula \cite[p. 98]{Rank} to determine the number of terms to verify in order to confirm identity (\ref{equation:finalTheta-id2-normal}).  To this end, one calculates $B$ where 
\begin{equation*}
B:=\sum_{\substack{s\in\mathcal{S}_{N}\\ s\ne i\infty}}\textup{min}(\{ \textup{ORD}(f_{j},s,\Gamma_{1}(54)):1\le j\le n\}\cup\{0\}),
\end{equation*}
and where $\textup{ORD}(f,\zeta,\Gamma):=\kappa(\zeta,\Gamma)\textup{ord}(f,\zeta)$, with $\kappa(\zeta,\Gamma)$ denoting the fan width of the cusp $\zeta$ and $\textup{ord}(f,\zeta)$ denoting the invariant order.  We direct the interested reader to \cite[p. 91]{Rank} for further details.  

In our running example, $B=-63$.  From the Valence Formula \cite[Corollary 2.5]{FG} we know that (\ref{equation:finalTheta-id2-normal}) is true if and only if 
\begin{equation*}
\textup{ORD}(g(\tau), i\infty,\Gamma_{1}(54))>-B
\end{equation*}
Hence one only needs to verify identity (\ref{equation:finalTheta-id2-normal}) out through $\mathcal{O}(q^{64})$, which is what one does in the fifth and final step.
\end{proof}

\section{Technical Results for $n=2$}\label{section:tech2}

In the proofs of Theorems \ref{theorem:3B-ids}--\ref{theorem:4psi-ids}, not all of the $D_{2}(1,z,z';q)$ terms encountered evaluate to a single quotient of theta functions.  We list the ones that do.  Some their proofs follow from straightforward evaluations using classical theta function identities and properties, while for other proofs we need to use the methods of \cite{FG}.

\begin{proposition}\label{proposition:3B-ids} We have
\begin{align}
D_{2}(1,q^3,q^8;q^{4})
&=-q\frac{\Theta_{2}\Theta_{4}^5\Theta_{16}^2}{\Theta_{1}^2\Theta_{8}^5},
\label{equation:D2-B-id1}\\
D_{2}(1,q^{3},q^{12};q^{4})
&=-\frac{1}{2}\frac{\Theta_{2}\Theta_{4}^3\Theta_{8}}{\Theta_{1}^2\Theta_{16}^2}.
\label{equation:D2-B-id2}
\end{align}
\end{proposition}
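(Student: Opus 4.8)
\textbf{Proof proposal for Proposition \ref{proposition:3B-ids}.}

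The plan is to evaluate each $D_2(1,z,z';q)$ term directly from Corollary \ref{corollary:msplitn3zprime}'s $n=2$ analogue, namely the explicit formula (\ref{equation:msplit2}), and then show that the resulting bracketed combination of two theta quotients collapses to a single quotient. First I would substitute $x=1$ into (\ref{equation:msplit2}), which simplifies the prefactor $\Theta(xz;q)=\Theta(z;q)$ and kills the $x^2$ factors inside the theta arguments, leaving
\begin{equation*}
D_2(1,z,z';q)=\frac{z'(q^2;q^2)_\infty^3}{\Theta(z;q)\Theta(z';q^4)}\Bigl[\frac{\Theta(-qzz';q^2)\Theta(z^2/z';q^4)}{\Theta(-qz';q^2)\Theta(z;q^2)}-z\frac{\Theta(-q^2zz';q^2)\Theta(q^2z^2/z';q^4)}{\Theta(-qz';q^2)\Theta(qz;q^2)}\Bigr].
\end{equation*}
For (\ref{equation:D2-B-id1}) I set $z=q^3,z'=q^8$; for (\ref{equation:D2-B-id2}) I set $z=q^3,z'=q^{12}$. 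In each case the theta arguments become explicit powers of $q$, so the next step is to rewrite everything in the base-$q^4$ notation $\Theta_{a,m}$ and its barred variant using the ellipticity relation (\ref{equation:theta-elliptic}) to fold exponents into a standard residue range.

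The heart of the matter is the bracketed difference of two terms over the common denominator $\Theta(-qz';q^2)$. After clearing this denominator, the numerator is a difference of products of four theta functions, and this is precisely the shape governed by the three-term Weierstrass relation (\ref{equation:3termWeier}). So the key step is to recognize the numerator difference as one side of (\ref{equation:3termWeier}) for a suitable choice of $a,b,c,d$, which rewrites the difference of two theta-products as a single theta-product. Once the bracket collapses, I combine it with the prefactor, apply the product-splitting relations (\ref{equation:theta-mod}), (\ref{equation:theta-roots}), and the modulus-doubling identity (\ref{equation:H1Thm1.1}) to convert the mixed-modulus theta functions (bases $q^2$ and $q^4$) into products of the $\Theta_m$ at a single base, and finally match against the claimed right-hand sides $-q\Theta_2\Theta_4^5\Theta_{16}^2/(\Theta_1^2\Theta_8^5)$ and $-\tfrac12\Theta_2\Theta_4^3\Theta_8/(\Theta_1^2\Theta_{16}^2)$. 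The overall $q$-power and sign are tracked through the $z$ and $z'$ substitutions together with the normalizations in (\ref{equation:theta-inv}).

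The main obstacle I anticipate is the second step: correctly identifying the Weierstrass parameters $a,b,c,d$ so that the two-term bracket is exactly one application of (\ref{equation:3termWeier}), rather than needing several nested applications. The theta arguments here mix base $q^2$ (the $\Theta(-qzz';q^2)$ factors) and base $q^4$ (the $\Theta(z^2/z';q^4)$ factors), so before invoking Weierstrass I may first need to push one family to a common base via (\ref{equation:theta-mod}) or (\ref{equation:theta-roots}), and the bookkeeping of half-integer exponents and the multiplier $b/c$ in (\ref{equation:3termWeier}) is where errors are most likely. If a clean single Weierstrass reduction does not present itself, the fallback is to bypass the hand computation entirely and instead normalize each identity to the form $g(\tau)=0$ and verify it as a modular-function identity on a suitable $\Gamma_1(N)$ (with $N$ a multiple of $16$) using the Valence-Formula machinery of \cite{FG}, exactly as in the proof of Proposition \ref{proposition:finalTheta-ids}; this is guaranteed to succeed since both sides are finite $q$-products and hence modular functions.
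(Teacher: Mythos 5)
Your plan is sound and follows the same overall route as the paper: substitute into (\ref{equation:msplit2}), simplify with (\ref{equation:theta-elliptic}) and (\ref{equation:theta-inv}), and collapse the two-term bracket into a single theta product before rearranging infinite products. (Minor slip: the $n=2$ specialization is Corollary \ref{corollary:msplitn2zprime}, not \ref{corollary:msplitn3zprime}.) The one substantive divergence is in the collapsing step, which you correctly flag as the main obstacle: the paper does not use the Weierstrass relation (\ref{equation:3termWeier}) here. For (\ref{equation:D2-B-id1}) it first applies (\ref{equation:theta-mod}) and (\ref{equation:theta-roots}) to reduce the bracket to $q^{2}\Theta(q^{2};q^{16})^{2}+\Theta(q^{6};q^{16})^{2}$, and then reads the ``quadratic'' identity (\ref{equation:H1Thm1.1}) from right to left (with $q\to q^{8}$, $x=-q^{2}$, $y=q^{4}$) to collapse this sum of two squares to $\Theta(-q^{2};q^{8})\Theta(q^{4};q^{8})$; so (\ref{equation:H1Thm1.1}) is the collapsing tool, not merely a base-conversion device as in your description. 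For (\ref{equation:D2-B-id2}) the paper sidesteps the collapse entirely by invoking the single-quotient evaluation of $D_2$ from \cite[Corollary 3.7]{HM}, which applies for this particular $z'$ and immediately yields one quotient to be rearranged. Your Weierstrass identification may still be made to work (the two identities are close relatives), and in any case your stated fallback to the Valence-Formula machinery of \cite{FG} is exactly the method the paper uses for the harder $D_2$ evaluations elsewhere in Section \ref{section:tech2}, so the proposal has no genuine gap.
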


\begin{proposition}\label{proposition:3rho-ids} We have
\begin{align}
D_{2}(1,q,q^{12};q^{6})
&=-q \frac{\Theta_{2}^2\Theta_{3}^2\Theta_{6}\Theta_{8}\Theta_{24}}
{\Theta_{1}^2\Theta_{4}\Theta_{12}^3},
\label{equation:D2-rho-id1}\\
D_2(1,q,q^{18};q^{6})
&=-\frac{1}{2}\frac{\Theta_{2}\Theta_{3}^2\Theta_{4}^2}
{\Theta_{1}^2\Theta_{8}\Theta_{24}}.
\label{equation:D2-rho-id2}
\end{align}
\end{proposition}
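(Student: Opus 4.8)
The final statement in the excerpt is Proposition \ref{proposition:3rho-ids}, which asserts the two evaluations
\begin{align*}
D_{2}(1,q,q^{12};q^{6}) &= -q \frac{\Theta_{2}^2\Theta_{3}^2\Theta_{6}\Theta_{8}\Theta_{24}}{\Theta_{1}^2\Theta_{4}\Theta_{12}^3},\\
D_2(1,q,q^{18};q^{6}) &= -\frac{1}{2}\frac{\Theta_{2}\Theta_{3}^2\Theta_{4}^2}{\Theta_{1}^2\Theta_{8}\Theta_{24}}.
\end{align*}

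Let me think about how I would approach this.

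First, let me understand what $D_2(1, z, z'; q)$ is. From Corollary \ref{corollary:msplitn2zprime}, we have the formula for $D_2(x,z,z';q)$ with $x=1$:
$$D_2(1,z,z';q) = \frac{z'(q^2;q^2)_\infty^3}{\Theta(z;q)\Theta(z';q^4)}\left[\frac{\Theta(-qzz';q^2)\Theta(z^2/z';q^4)}{\Theta(-qz';q^2)\Theta(z;q^2)} - z\frac{\Theta(-q^2zz';q^2)\Theta(q^2z^2/z';q^4)}{\Theta(-qz';q^2)\Theta(qz;q^2)}\right]$$

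So $D_2(1,z,z';q)$ is explicitly a sum of two quotients of theta functions (coming from the $n=2$ split formula). The proposition claims that for the specific choices $(z,z') = (q, q^{12})$ and $(z, z') = (q, q^{18})$, both with $q \to q^6$, these two-term expressions collapse to single quotients.

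Let me write out what we need. We substitute $q \to q^6$ in Corollary \ref{corollary:msplitn2zprime}. So "$q$" in the formula becomes $q^6$, "$q^2$" becomes $q^{12}$, "$q^4$" becomes $q^{24}$.

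**For the first identity**, $z = q$, $z' = q^{12}$:
$$D_2(1,q,q^{12};q^6) = \frac{q^{12}(q^{12};q^{12})_\infty^3}{\Theta(q;q^6)\Theta(q^{12};q^{24})}\left[\frac{\Theta(-q^{6}\cdot q \cdot q^{12};q^{12})\Theta(q^2/q^{12};q^{24})}{\Theta(-q^6\cdot q^{12};q^{12})\Theta(q;q^{12})} - q\frac{\Theta(-q^{12}\cdot q\cdot q^{12};q^{12})\Theta(q^{12}\cdot q^2/q^{12};q^{24})}{\Theta(-q^6\cdot q^{12};q^{12})\Theta(q^6\cdot q;q^{12})}\right]$$

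Let me simplify the arguments:
- $(q^{12};q^{12})_\infty = \Theta_{12}$ (since $\Theta_m = \prod_{i\ge 1}(1-q^{mi})$)
- $\Theta(q;q^6) = \Theta_{1,6}$
- $\Theta(q^{12};q^{24}) = \Theta_{12,24} = \Theta(q^{12};q^{24})$. Using $\Theta(q^a;q^{2a}) = ?$... Actually $\Theta(q^{12};q^{24})$, by $\Theta_m = \Theta(q^m;q^{3m})$ this isn't directly one of those, but $\Theta(q^{12};q^{24})$ is a theta with modulus $q^{24}$ and argument $q^{12}$, so it's "$\Theta_{12,24}$" in the general notation $\Theta_{a,m} = \Theta(q^a;q^m)$.

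First term inside bracket:
- $\Theta(-q^{19};q^{12})$: reduce $-q^{19} = -q^{12}\cdot q^7$, so by ellipticity $\Theta(-q^{19};q^{12}) = \Theta(q^{12}\cdot(-q^7);q^{12})$.
- $\Theta(q^2/q^{12};q^{24}) = \Theta(q^{-10};q^{24})$.
- $\Theta(-q^{18};q^{12})$: $-q^{18} = -q^{12}\cdot q^6$.
- $\Theta(q;q^{12}) = \Theta_{1,12}$.

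Second term inside bracket:
- $\Theta(-q^{25};q^{12})$: $-q^{25} = -q^{24}\cdot q = -(q^{12})^2 \cdot q$.
- $\Theta(q^2;q^{24}) = \Theta_{2,24}$.
- $\Theta(-q^{18};q^{12})$ again.
- $\Theta(q^7;q^{12}) = \Theta_{7,12}$.

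This is going to be a mess but the structure is clear.

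**My plan:**

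The plan is to prove both identities directly by starting from the explicit $n=2$ split formula in Corollary \ref{corollary:msplitn2zprime} (specialized to $x=1$ and $q\to q^6$), and then simplifying the resulting two-term bracketed expression to a single quotient of theta functions.

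First I would substitute the specific values $z=q$, $z'=q^{12}$ (resp. $z'=q^{18}$) and $q\to q^6$ into formula (\ref{equation:msplit2}). This produces, for each identity, a difference of two products of theta quotients. The goal is to combine these two terms into one.

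Second, I would use the elliptic transformation (\ref{equation:theta-elliptic}), $\Theta(q^n x;q) = (-1)^n q^{-\binom{n}{2}} x^{-n}\Theta(x;q)$, and the inversion/symmetry relations (\ref{equation:theta-inv}) to bring all the theta arguments into a standard reduced form. After this step, both terms in the bracket will be expressed in terms of a small number of standard theta functions $\Theta_{a,m}$ and $\overline{\Theta}_{a,m}$, and the overlapping factors in numerator and denominator should become visible.

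Third — and this is where the real content lies — I would need to show that the two-term combination collapses. The key structural fact is that the bracketed expression is exactly the $n=2$ instance of the sum in Theorem \ref{theorem:msplit-general-n}, and the collapse to a single theta quotient should be governed by the three-term Weierstrass relation (\ref{equation:3termWeier}) (or equivalently by the addition formula (\ref{equation:H1Thm1.1})). Concretely, after clearing the common denominators I expect to reduce the claimed identity to a single instance of (\ref{equation:3termWeier}) with an appropriate choice of $a,b,c,d$, or to a short chain of two or three such applications together with the doubling/dissection relations (\ref{equation:theta-mod}), (\ref{equation:theta-split}), (\ref{equation:theta-roots}).

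The main obstacle I anticipate is the bookkeeping in the third step: identifying the correct substitution $a,b,c,d$ in the Weierstrass relation so that the two terms in the bracket match the two terms on the right-hand side of (\ref{equation:3termWeier}), and checking that the leftover single-product term reproduces the claimed right-hand side (including the precise power of $q$ and the sign). If a single application of (\ref{equation:3termWeier}) does not suffice — which is plausible given the higher modulus $q^{24}$ appearing alongside $q^{12}$ — then the modulus-splitting identities will be needed to reconcile the $q^{12}$-theta functions with the $q^{24}$-theta functions before the Weierstrass relation can be applied. As a fallback, following the pattern established in Proposition \ref{proposition:finalTheta-ids} for the $n=3$ case, one can instead normalize each claimed identity to the form $g(\tau)=0$, verify that both sides are modular functions on an appropriate congruence subgroup $\Gamma_1(N)$ (here $N=24$ or $N=48$), compute the cusp orders and the Valence-Formula bound $B$ via the packages of \cite{FG}, and confirm the identity by checking finitely many $q$-series coefficients.
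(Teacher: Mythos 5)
Your proposal follows essentially the same route as the paper's proof: substitute into (\ref{equation:msplit2}) with $q\to q^6$, reduce the arguments via (\ref{equation:theta-elliptic}), combine the two bracketed terms over a common denominator using (\ref{equation:theta-mod}) and (\ref{equation:theta-roots}) to pass to the common modulus $q^{24}$, and then collapse the sum — via (\ref{equation:H1Thm1.1}) for (\ref{equation:D2-rho-id1}) and via a single application of the Weierstrass relation (\ref{equation:3termWeier}) (with $q\to q^{12}$, $(a,b,c,d)\to(iq^6,iq^4,-iq^3,-iq^2)$) for (\ref{equation:D2-rho-id2}) — followed by elementary product rearrangements. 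The computer-algebra fallback you mention is not needed for this proposition.
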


\begin{proposition}\label{proposition:3lambda-ids} We have
\begin{align}
D_2(1,-q^2,q^{12};q^{6})
&=q^2\frac{\Theta_{2}\Theta_{6}^2\Theta_{24}^3}{\Theta_{8}\Theta_{12}^4},
\label{equation:D2-lambda-id1}\\
D_2(1,-q,q^{12};q^{6})
&=q\frac{\Theta_{1}^2\Theta_{4}\Theta_{6}^7\Theta_{8}\Theta_{24}}{\Theta_{2}^4\Theta_{3}^2\Theta_{12}^5},
\label{equation:D2-lambda-id2}\\
D_2(1,-q^2,-1;q^{6})
&=-\frac{\Theta_{2}\Theta_{12}^4\Theta_{16}^2\Theta_{24}}{\Theta_{4}^2\Theta_{6}^2\Theta_{8}\Theta_{48}^2}.
\label{equation:D2-lambda-id3}
\end{align}
\end{proposition}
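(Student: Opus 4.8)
The plan is to evaluate each of the three $D_2(1,z,z';q)$ expressions in Proposition~\ref{proposition:3lambda-ids} by specializing the closed form of Corollary~\ref{corollary:msplitn2zprime}. Setting $x=1$ in~(\ref{equation:msplit2}) collapses the bracketed sum to
\begin{equation*}
D_2(1,z,z';q)=\frac{z'(q^2;q^2)_\infty^3}{\Theta(z;q)\Theta(z';q^4)}\Big[\frac{\Theta(-qzz';q^2)\Theta(z^2/z';q^4)}{\Theta(-qz';q^2)\Theta(z;q^2)}-z\frac{\Theta(-q^2zz';q^2)\Theta(q^2z^2/z';q^4)}{\Theta(-qz';q^2)\Theta(qz;q^2)}\Big].
\end{equation*}
For each identity I would substitute the relevant $(z,z',q)$ triple—namely $(-q^2,q^{12},q^6)$, $(-q,q^{12},q^6)$, and $(-q^2,-1,q^6)$—and then show that the bracketed difference of two terms collapses to a single quotient of theta functions matching the stated right-hand side.

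First, for each case I would rewrite every theta argument in the standard $\Theta_{a,m}$ and $\overline{\Theta}_{a,m}$ notation, using the ellipticity relation~(\ref{equation:theta-elliptic}) and the inversion/symmetry relation~(\ref{equation:theta-inv}) to bring each $\Theta(\pm q^a;q^m)$ into a canonical residue range, keeping careful track of the prefactor powers of $q$ and signs that these reductions generate. The common denominator $\Theta(z;q)\Theta(z';q^4)\Theta(-qz';q^2)$ and the outer factor $z'(q^2;q^2)_\infty^3=z'\Theta_2^3$ (after noting $(q^2;q^2)_\infty^3$ equals $\Theta_2^3$ up to the conventions in Section~\ref{section:prelim}) can be simplified against the two numerators. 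The key algebraic step is combining the two fractions inside the brackets over the common denominator $\Theta(-qz';q^2)\Theta(z;q^2)\Theta(qz;q^2)$; using the modularity splitting~(\ref{equation:theta-mod}), $\Theta(z;q)=\Theta_1\Theta(z;q^2)\Theta(qz;q^2)/\Theta_2^2$, lets me recognize $\Theta(z;q^2)\Theta(qz;q^2)$ as essentially $\Theta(z;q)$ up to the $\Theta_1/\Theta_2^2$ factor, which cancels much of the denominator cleanly.

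The heart of the matter is showing the two-term numerator collapses, and here I would invoke the three-term Weierstrass relation~(\ref{equation:3termWeier}) or, equivalently for base $q^2$, the product formula~(\ref{equation:H1Thm1.1}). The bracketed expression has the shape $\Theta(A)\Theta(B)-z\Theta(C)\Theta(D)$ in base $q^2$ and $q^4$, which is precisely the template that a single application of~(\ref{equation:H1Thm1.1}) (run in reverse, to \emph{factor} a product of two thetas into such a difference) resolves into a single product $\Theta(x;q)\Theta(y;q)$; matching $x,y$ to the arguments appearing in the numerators is a finite check of exponents and signs. For the third case~(\ref{equation:D2-lambda-id3}), where $z'=-1$ introduces factors like $\Theta(-1;q^4)=\overline{\Theta}_{0,4}=0$-type degeneracies in naive substitution, I would instead take the limit through the regularized form or replace $z'=-1$ by $z'=-q^{24}$ using the quasi-periodicity of $m$ in its second argument together with~(\ref{equation:theta-elliptic}), which is the route the earlier lemmas (e.g.\ the $z'=-1$ cases in Lemma~\ref{lemma:dTerm-id1}) implicitly follow.

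The main obstacle I anticipate is not any single deep idea but the bookkeeping: correctly reducing each $\Theta(\pm q^a;q^m)$ modulo the quasi-period, tracking the accumulated $q$-powers and $(-1)$ signs so that the final monomial prefactor ($q^2$, $q$, and $-1$ respectively) comes out exactly right, and choosing the correct orientation of~(\ref{equation:H1Thm1.1}) so the two numerator terms fuse rather than split further. If the Weierstrass/product approach resists a clean collapse for any one of the three—most likely the asymmetric case~(\ref{equation:D2-lambda-id2}), whose right-hand side carries the high powers $\Theta_6^7$ and $\Theta_{12}^5$ suggesting a longer chain of theta manipulations—I would fall back on the modular-function verification method of Proposition~\ref{proposition:finalTheta-ids}: normalize the claimed identity to $g(\tau)=0$, confirm via \cite{Rob} that each side is a modular function on a suitable $\Gamma_1(N)$ with $N$ the least common multiple of the moduli, compute the cusp orders and the bound $B$ from the Valence Formula, and verify the $q$-expansion out to $\mathcal{O}(q^{-B+1})$ using Garvan's \emph{qseries} and \emph{thetaids} packages \cite{FG}.
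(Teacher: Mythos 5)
Your proposal matches the paper's proof in essentially every respect: the paper substitutes into (\ref{equation:msplit2}), reduces the theta arguments with (\ref{equation:theta-elliptic}), combines the two bracketed terms using (\ref{equation:theta-mod}) and (\ref{equation:theta-roots}), and collapses the resulting sum of squares via (\ref{equation:H1Thm1.1}) for (\ref{equation:D2-lambda-id1}) and (\ref{equation:D2-lambda-id2}), while the case that resists a one-step collapse and requires the Garvan-package verification of an auxiliary theta identity is (\ref{equation:D2-lambda-id3}), not (\ref{equation:D2-lambda-id2}) as you guessed. One small correction: no regularization is needed at $z'=-1$, since $\Theta(-1;q^{24})=2(-q^{24};q^{24})_{\infty}^2(q^{24};q^{24})_{\infty}\neq 0$ (it is $\Theta(1;\cdot)$, not $\Theta(-1;\cdot)$, that vanishes), so the direct substitution goes through and your fallback to the modular-function method then finishes that case exactly as the paper does.
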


\begin{proposition}\label{proposition:3Rphi-ids} We have
\begin{align}
D_{2}(1,q,q^4;q^2)
&=-2q\frac{\Theta_{2}^7\Theta_{8}^4}{\Theta_{1}^4\Theta_{4}^6},
\label{equation:D2-Rphi-id1}\\
D_{2}(1,q,q^{-2};q^2)
&=-\frac{1}{2}\frac{\Theta_{2}^3\Theta_{4}^6}{\Theta_{1}^4\Theta_{8}^4}.
\label{equation:D2-Rphi-id2}
\end{align}
\end{proposition}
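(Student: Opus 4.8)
The plan is to prove the two identities of Proposition~\ref{proposition:3Rphi-ids} by specializing the $n=2$ splitting formula of Corollary~\ref{corollary:msplitn2zprime}. For each identity I would take the left-hand side $D_2(1,z,z';q^2)$ with the indicated values of $z,z'$ and substitute directly into (\ref{equation:msplit2}) with $x=1$. With $x=1$ the formula collapses considerably: the factor $\Theta(xz;q)$ becomes $\Theta(z;q^2)$, and the bracketed expression reduces to
\begin{equation*}
\frac{\Theta(-q^2 zz';q^4)\Theta(z^2/z';q^8)}{\Theta(-q^2 z';q^4)\Theta(z;q^4)}
- z\,\frac{\Theta(-q^4 zz';q^4)\Theta(q^4 z^2/z';q^8)}{\Theta(-q^2 z';q^4)\Theta(q^2 z;q^4)},
\end{equation*}
where here $q$ has been replaced by $q^2$ throughout to match the modulus $q^2$ in $D_2(\cdots;q^2)$. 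The first step is thus purely mechanical: carry out these substitutions and clear the overall prefactor $z'(q^4;q^4)_\infty^3 / \big(\Theta(z;q^2)\Theta(z';q^8)\big)$.

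The second step is to combine the two terms in the bracket into a single quotient of theta functions. This is where the real work lies. Each term is already a ratio of theta products, so the task is to show that their difference telescopes. I expect this to follow from one or two applications of the three-term Weierstrass relation (\ref{equation:3termWeier}), possibly preceded by the modulus-splitting identities (\ref{equation:theta-mod}), (\ref{equation:theta-split}), or (\ref{equation:theta-roots}) to align the arguments. The elliptic transformation (\ref{equation:theta-elliptic}) will be needed to absorb the stray powers of $q$ (for instance the $q^4 z^2/z'$ argument against $z^2/z'$) and to reconcile signs via (\ref{equation:theta-inv}). For (\ref{equation:D2-Rphi-id1}) the parameters $z=q$, $z'=q^4$ give arguments that are themselves powers of $q$, so after substitution everything is expressible in the $\Theta_m$ notation and the identity becomes a closed theta relation; similarly (\ref{equation:D2-Rphi-id2}) with $z=q$, $z'=q^{-2}$, where the negative exponent is handled by (\ref{equation:theta-elliptic}).

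The main obstacle I anticipate is precisely this collapse of the two-term bracket to a single quotient: a priori there is no reason the difference of two theta ratios should simplify, and indeed the preamble to Section~\ref{section:tech2} warns that \emph{not} all $D_2(1,z,z';q)$ terms do collapse. So the heart of the proof is verifying that these particular parameter choices are among the lucky ones. Following the pattern set in the preamble (``Some their proofs follow from straightforward evaluations using classical theta function identities and properties, while for other proofs we need to use the methods of \cite{FG}''), my expectation is that if a direct Weierstrass-relation argument does not cleanly produce the stated right-hand side, I would fall back on Frank Garvan's \emph{qseries} and \emph{thetaids} Maple packages exactly as in the proof of Proposition~\ref{proposition:finalTheta-ids}: normalize the proposed identity to $g(\tau)=0$, verify each theta quotient is a modular function on a suitable $\Gamma_1(N)$ (here the relevant modulus divides $16$, so likely $\Gamma_1(16)$), compute the bound $B$ from the cusp orders via the Valence Formula, and then confirm the $q$-expansion out to $\mathcal O(q^{-B+1})$. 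Finally, once the single quotient is obtained, a routine rewriting using the elementary conversions listed at the start of Section~\ref{section:prelim} puts the answer into the exact $\Theta_m$ form displayed on the right-hand sides of (\ref{equation:D2-Rphi-id1}) and (\ref{equation:D2-Rphi-id2}).
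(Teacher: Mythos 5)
Your setup is exactly the paper's: substitute the stated parameters into (\ref{equation:msplit2}) with $x=1$ and base $q^2$, and simplify with (\ref{equation:theta-elliptic}) and (\ref{equation:theta-inv}). Where you diverge is in what you expect the ``heart of the proof'' to be. You anticipate that collapsing the two-term bracket to a single quotient requires one or two Weierstrass relations (\ref{equation:3termWeier}), with Garvan's Maple machinery as a fallback. In fact neither is needed for these two parameter choices, and this is precisely why they appear in Proposition \ref{proposition:3Rphi-ids} as single-quotient evaluations: for (\ref{equation:D2-Rphi-id1}) the two theta quotients in the bracket become \emph{literally identical} after the elliptic reduction --- each equals $-q^{-3}\,\Theta(-q;q^4)\Theta(q^{2};q^{8})/\bigl(\Theta(-q^2;q^4)\Theta(q;q^4)\bigr)$ --- so they simply add to give the factor $2$; for (\ref{equation:D2-Rphi-id2}) the second quotient contains $\Theta(q^{4}z^{2}/z';q^{8})=\Theta(q^{8};q^{8})=0$ and vanishes outright, leaving only the first term and the factor $\tfrac12$ coming from $\Theta(-1;q^4)=\overline{\Theta}_{0,4}$. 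After either observation only elementary product rearrangements remain. Your fallback route (normalize, check modularity on $\Gamma_1(16)$, bound via the Valence Formula, verify finitely many coefficients) would certainly also prove both identities, so the proposal is not wrong --- it is just heavier than the paper's argument, and your Plan A as literally stated (combining two genuinely distinct quotients by a three-term relation) is not the mechanism that operates here. That mechanism does appear elsewhere in Section \ref{section:tech2} (e.g.\ Propositions \ref{proposition:3rho-ids} and \ref{proposition:3lambda-ids}), which is presumably what the preamble's warning refers to.
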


\begin{proposition}\label{proposition:3Rxi-ids} We have
\begin{align}
D_2(1,-1,q^2;q)
&=\frac{1}{4}\frac{\Theta_{1}^5}{\Theta_{2}^4},
\label{equation:D2-Rxi-id1}\\
D_2(1,-1,q^{-1};q)
&=-\frac{1}{4}\frac{\Theta_{1}^5}{\Theta_{2}^4}.
\label{equation:D2-Rxi-id2}
\end{align}
\end{proposition}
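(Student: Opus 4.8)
The plan is to prove Proposition \ref{proposition:3Rxi-ids} by specializing Corollary \ref{corollary:msplitn2zprime}. Both identities (\ref{equation:D2-Rxi-id1}) and (\ref{equation:D2-Rxi-id2}) are of the form $D_2(1,z,z';q)$ with $x=1$, so the large bracketed expression in (\ref{equation:msplit2}) simplifies considerably. First I would substitute $x=1$ into (\ref{equation:msplit2}), giving
\begin{equation*}
D_2(1,z,z';q)=\frac{z'(q^2;q^2)_{\infty}^3}{\Theta(z;q)\Theta(z';q^4)}\Bigl[\frac{\Theta(-qzz';q^2)\Theta(z^2/z';q^{4})}{\Theta(-qz';q^2)\Theta(z;q^2)}-z\frac{\Theta(-q^2zz';q^2)\Theta(q^2z^2/z';q^{4})}{\Theta(-qz';q^2)\Theta(qz;q^2)}\Bigr],
\end{equation*}
and note that $(q^2;q^2)_{\infty}^3=\Theta_2^3$ in the paper's notation. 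For (\ref{equation:D2-Rxi-id1}) I would then set $z=-1$, $z'=q^2$, and for (\ref{equation:D2-Rxi-id2}) set $z=-1$, $z'=q^{-1}$, in each case absorbing the resulting theta arguments.

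The key simplification comes from the choice $z=-1$. Using (\ref{equation:theta-inv}) one has $\Theta(-1;q)=-\Theta(-1;q)$ in the naive sense, so instead I would track the factor $\Theta(z;q)=\Theta(-1;q)=2\overline{\Theta}_{0,1}$-type quantities carefully via the paper's convention $\overline{\Theta}_{0,1}=2\overline{\Theta}_{1,4}=2\Theta_2^2/\Theta_1$, recognizing that $\Theta(-1;q)$ appears in the denominator and combines with the $z^2/z'$ numerators. With $z=-1$ we get $z^2=1$, so the two theta quotients inside the bracket become $\Theta(q^2/z';q^4)/\Theta(-1;q^2)$ and $\Theta(q^2/z';q^4)$-shifted versions; the elliptic transformation (\ref{equation:theta-elliptic}) lets me shift $q$-powers in and out and the inversion (\ref{equation:theta-inv}) lets me fold $z^2/z'$ terms. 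After these reductions each bracket collapses to a product of a handful of theta functions, and the two terms in the bracket should combine (via a single application of the three-term Weierstrass relation (\ref{equation:3termWeier}), or more likely via (\ref{equation:H1Thm1.1}) since the moduli are $q,q^2,q^4$) into one quotient.

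I expect the main obstacle to be the bookkeeping of the doubling/halving of the nome: the left-hand quantities live on $\Theta(\cdot;q)$ while the bracket mixes $q^2$ and $q^4$ theta functions, so the modular relations (\ref{equation:theta-mod}), (\ref{equation:theta-split}), and (\ref{equation:theta-roots}) must be deployed to push everything to a common base and then re-express the answer purely in terms of $\Theta_1$ and $\Theta_2$ to match the target $\tfrac14\Theta_1^5/\Theta_2^4$. The two identities differ only in the value of $z'$ ($q^2$ versus $q^{-1}$), and since $D_2(1,-1,z';q)$ as a function of $z'$ changes by the closed form (\ref{equation:changing-z}) when $z'$ is varied, an efficient route is to prove (\ref{equation:D2-Rxi-id1}) directly and then deduce (\ref{equation:D2-Rxi-id2}) by computing the difference $D_2(1,-1,q^2;q)-D_2(1,-1,q^{-1};q)$; however this difference involves the $m(-q^{\pm1},z';q^4)$ pieces rather than the $m(x,z;q)$ piece, so I would instead apply (\ref{equation:changing-z}) at modulus $q^4$ to the two $z'$-values and check the resulting theta quotient equals $\tfrac12\Theta_1^5/\Theta_2^4$. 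If direct simplification of the bracket proves too unwieldy, the fallback is the Garvan-package approach of Proposition \ref{proposition:finalTheta-ids}: normalize each identity to $g(\tau)=0$, verify modularity on the appropriate $\Gamma_1(N)$ (here $N$ a small multiple of $4$), compute the valence bound $B$, and confirm the $q$-expansion out to $\mathcal{O}(q^{-B+1})$.
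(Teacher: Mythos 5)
Your overall strategy---substituting $x=1$, $z=-1$ and the two values of $z'$ directly into (\ref{equation:msplit2}) and simplifying with the elementary theta identities---is exactly what the paper does, and it works. However, you have missed the one observation that makes these two evaluations essentially trivial and distinguishes them from the harder cases in Section \ref{section:tech2}: because $z=-1$ forces $z^2=1$, one of the two theta quotients inside the bracket of (\ref{equation:msplit2}) vanishes identically. For $z'=q^2$ the second quotient contains the factor $\Theta(q^2z^2/z';q^4)=\Theta(1;q^4)=0$, and for $z'=q^{-1}$ the first quotient contains $\Theta(-qzz';q^2)=\Theta(1;q^2)=0$. So there is nothing to ``combine'': no application of (\ref{equation:3termWeier}) or (\ref{equation:H1Thm1.1}) is needed, and certainly no appeal to the Garvan packages. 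Each $D_2$ collapses to a single quotient which reduces to $\pm\tfrac14\Theta_{1}^5/\Theta_{2}^4$ by (\ref{equation:theta-elliptic}), (\ref{equation:theta-inv}), and product rearrangements. Two smaller points. First, your parenthetical that (\ref{equation:theta-inv}) gives ``$\Theta(-1;q)=-\Theta(-1;q)$ in the naive sense'' is incorrect: with $x=-1$ that identity reads $\Theta(-1;q)=-(-1)\Theta(-1;q)$, a tautology, and $\Theta(-1;q)=\overline{\Theta}_{0,1}=2\Theta_2^2/\Theta_1$ is a perfectly good nonzero quantity sitting in the denominator; no special care is required. Second, your alternative of deducing (\ref{equation:D2-Rxi-id2}) from (\ref{equation:D2-Rxi-id1}) by applying (\ref{equation:changing-z}) at base $q^4$ to the pieces $m(-q^{\pm1},z';q^4)$ is viable in principle, but it replaces one trivial computation with two theta-quotient evaluations plus a further theta identity, so it is not the efficient route here.
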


\begin{proposition}\label{proposition:4psi-ids} We have
\begin{align}
D_2(1,-q,-1;q^{3})
&=-\frac{\Theta_{1}\Theta_{6}^4\Theta_{8}^2\Theta_{12}}
{\Theta_{2}^2\Theta_{3}^2\Theta_{4}\Theta_{24}^2},
\label{equation:D2-psi-id1}\\
D_2(1,-q,-q^{6};q^3)
&=q \frac{\Theta_{1}\Theta_{4}^5\Theta_{6}^6\Theta_{24}^2}
{\Theta_{2}^4\Theta_{3}^2\Theta_{8}^2\Theta_{12}^5}.
\label{equation:D2-psi-id2}
\end{align}
\end{proposition}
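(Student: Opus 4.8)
The plan is to evaluate each of the two $D_2(1,z,z';q^3)$-terms by specializing Corollary~\ref{corollary:msplitn2zprime} and then collapsing the resulting two-term bracket into a single quotient of theta functions. Concretely, for (\ref{equation:D2-psi-id1}) I would set $x=1$, $z=-q$, $z'=-1$, $q\mapsto q^3$ in the $n=2$ splitting formula (\ref{equation:msplit2}); for (\ref{equation:D2-psi-id2}) I would set $x=1$, $z=-q$, $z'=-q^6$, $q\mapsto q^3$. With $x=1$ the formula simplifies considerably: the prefactor becomes $z'(q^6;q^6)_\infty^3/[\Theta(z;q^3)\Theta(z';q^{12})]$, and inside the bracket each $\Theta(-q^3\cdot z z';q^6)$, $\Theta(z^2/z';q^{12})$, etc. is a theta function at an explicit argument. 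After substituting, I would use the elliptic transformation (\ref{equation:theta-elliptic}) and the inversion relation (\ref{equation:theta-inv}) to normalize every argument into the canonical form $\Theta_{a,m}$ or $\overline{\Theta}_{a,m}$, tracking the accumulated powers of $q$ and signs carefully.

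The heart of the argument is showing that the difference of the two bracketed terms equals a single monomial in theta functions. Each bracket is of the shape $A - z\,B$, where $A$ and $B$ are products of three theta functions divided by a common pair of theta factors; to merge them into one quotient I would multiply through to a common denominator and invoke the three-term Weierstrass relation (\ref{equation:3termWeier}) to rewrite the numerator $A'\,\Theta_1\Theta_2 - z\,B'\,\Theta_1'\Theta_2'$ as a single product. The precise application of (\ref{equation:3termWeier}) requires choosing the parameters $a,b,c,d$ so that the two products on its right-hand side match the two bracketed terms up to the $z$-prefactor; I would also lean on the modular splitting identities (\ref{equation:theta-mod}), (\ref{equation:theta-split}), and (\ref{equation:theta-roots}) to convert between moduli $q^3$, $q^6$, $q^{12}$, $q^{24}$, which is necessary because the target quotients mix $\Theta_1,\Theta_2,\Theta_3,\Theta_4,\Theta_6,\Theta_8,\Theta_{12},\Theta_{24}$.

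The main obstacle is the modulus bookkeeping: the split formula naturally produces theta functions to the bases $q^6$ and $q^{12}$, whereas the right-hand sides of (\ref{equation:D2-psi-id1})--(\ref{equation:D2-psi-id2}) are expressed in the bases $\Theta_m=\prod(1-q^{mi})$ for $m\in\{1,2,3,4,6,8,12,24\}$, so several rounds of (\ref{equation:theta-mod}) and (\ref{equation:theta-roots}) are needed to reconcile them, and a single misplaced power of $q$ or sign will derail the cancellation. If a direct Weierstrass collapse proves too intricate—particularly for (\ref{equation:D2-psi-id2}), whose right-hand side carries high exponents such as $\Theta_6^6$ and $\Theta_{12}^5$—I would fall back on the computational route already used in Proposition~\ref{proposition:finalTheta-ids}: normalize the claimed identity to $g(\tau)=0$, verify via Garvan's \emph{qseries} and \emph{thetaids} packages \cite{FG} that each side is a modular function on the relevant $\Gamma_1(N)$, compute the Valence-Formula bound $B$, and check the $q$-expansion out through $\mathcal{O}(q^{-B+1})$.
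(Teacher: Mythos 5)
Your proposal is correct in outline and ends up where the paper does, but it is worth flagging which of your two branches actually carries the proof. The paper begins exactly as you do: substitute $x=1$, $z=-q$, $z'=-1$ (resp.\ $z'=-q^6$), $q\mapsto q^3$ into (\ref{equation:msplit2}) and simplify with (\ref{equation:theta-elliptic}), obtaining in each case a prefactor times a two-term bracket. However, for this particular proposition the paper does \emph{not} collapse the bracket by the Weierstrass relation (\ref{equation:3termWeier}) or by (\ref{equation:H1Thm1.1}) --- that mechanism is what works for, say, $D_2(1,q,q^{18};q^6)$ in Proposition \ref{proposition:3rho-ids}, where the bracket terms line up with the two products on the right of (\ref{equation:3termWeier}). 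Here the two bracket terms do not pair off so cleanly (a direct attempt to square $\Theta(-q^4;q^6)$ via (\ref{equation:H1Thm1.1}) already produces two nonvanishing terms), so the paper instead applies (\ref{equation:theta-mod}) to rewrite everything as $\eta$-quotients and reduces (\ref{equation:D2-psi-id1}) and (\ref{equation:D2-psi-id2}) to the two three-term $\eta$-quotient identities (\ref{equation:equation:D2-psi-id1-thetaID}) and (\ref{equation:equation:D2-psi-id2-thetaID}), which are then certified by the modular-function/Valence-Formula procedure of Proposition \ref{proposition:finalTheta-ids} using Garvan's packages. In other words, your ``fallback'' computational route is the paper's actual route for both identities, not just the harder one; your Plan A would need a genuinely new elementary collapse that the paper does not supply, so you should expect to execute Plan B. With that understanding, the proposal is sound and would yield a complete proof.
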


The first lemma is used in the proofs of Propositions \ref{proposition:3lambda-ids} and \ref{proposition:4psi-ids}.  The second lemma is used in the proofs of Theorems \ref{theorem:3rho-ids}--\ref{theorem:3Rphi-ids}, \ref{theorem:4psi-ids}.  The proofs of both lemmas follow that of the proof of Proposition \ref{proposition:finalTheta-ids}, so we will not include them.

\begin{lemma}  We have the following theta function identities
{\allowdisplaybreaks \begin{gather}
\frac{1}{2} \frac{\Theta_{2}\Theta_{8}^5\Theta_{12}^5\Theta_{48}}{\Theta_{4}^3\Theta_{6}^3\Theta_{16}^3\Theta_{24}^3}
 +\frac{1}{2}\frac{\Theta_{4}^4\Theta_{24}^4}{\Theta_{2}\Theta_{6}\Theta_{8}^2\Theta_{12}^2\Theta_{16}\Theta_{48}}
 -1=0,\label{equation:D2-lambda-id3-thetaID}\\
 \frac{1}{2}\frac{\Theta_{1}\Theta_{4}^5\Theta_{6}^5\Theta_{24}}
{\Theta_{2}^3\Theta_{3}^3\Theta_{8}^3\Theta_{12}^3}
+\frac{1}{2}\frac{\Theta_{2}^4\Theta_{12}^4}
{\Theta_{1}\Theta_{3}\Theta_{4}^2\Theta_{6}^2\Theta_{8}\Theta_{24}}
-1=0,\label{equation:equation:D2-psi-id1-thetaID}\\
\frac{\Theta_{1}\Theta_{6}^{4}\Theta_{8}^3}
{\Theta_{3}^3\Theta_{4}^4\Theta_{24}}
+q\frac{\Theta_{2}^5\Theta_{8}\Theta_{12}\Theta_{24}}
{\Theta_{1}\Theta_{3}\Theta_{4}^5\Theta_{6}}
- 1=0.\label{equation:equation:D2-psi-id2-thetaID}
\end{gather}}%
\end{lemma}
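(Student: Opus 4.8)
The plan is to reduce the three identities to just two genuine modular-function computations and then settle those exactly as in the proof of Proposition~\ref{proposition:finalTheta-ids}. The first observation is that (\ref{equation:D2-lambda-id3-thetaID}) is not independent: every subscript appearing in it is even, and since $\Theta_m(q^2)=\Theta_{2m}(q)$, doubling all subscripts in (\ref{equation:equation:D2-psi-id1-thetaID}) produces precisely the left-hand side of (\ref{equation:D2-lambda-id3-thetaID}). Concretely, if $F(q)$ denotes the left-hand side of (\ref{equation:equation:D2-psi-id1-thetaID}), then $F(q^2)$ is the left-hand side of (\ref{equation:D2-lambda-id3-thetaID}), so (\ref{equation:D2-lambda-id3-thetaID}) follows from (\ref{equation:equation:D2-psi-id1-thetaID}) by the substitution $q\mapsto q^2$ and needs no separate modular analysis. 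This leaves (\ref{equation:equation:D2-psi-id1-thetaID}) and (\ref{equation:equation:D2-psi-id2-thetaID}), whose subscripts run through $\{1,2,3,4,6,8,12,24\}$, so both live at level $24$.

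For each of these two identities I would follow the five-step recipe of Proposition~\ref{proposition:finalTheta-ids}. After normalizing to the vanishing form $g(\tau)=f_1(\tau)+f_2(\tau)-1=0$ and converting each summand to an eta quotient via $\Theta_m=q^{-m/24}\eta(m\tau)$, I would: (i) apply \cite[Theorem 18]{Rob} to verify that each $f_j$ is a weight-zero modular function on $\Gamma_1(24)$, noting that each term already has total $\Theta$-exponent zero so only the remaining order and character conditions must be checked; (ii) use \cite[Corollary 4]{CKP} to list a complete set of inequivalent cusps of $\Gamma_1(24)$ together with their fan widths; (iii) use \cite[Lemma 3.2]{Biag} to tabulate the invariant order of each $f_j$ at every cusp; (iv) form the bound $B=\sum_{s\ne i\infty}\min(\{\textup{ORD}(f_j,s,\Gamma_1(24))\}\cup\{0\})$ and invoke the Valence Formula \cite[Corollary 2.5]{FG}, which guarantees $g\equiv 0$ once $\textup{ORD}(g,i\infty,\Gamma_1(24))>-B$; and (v) confirm this finite check with Frank Garvan's \emph{qseries} and \emph{thetaids} packages \cite{FG}.

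The work here is computational rather than conceptual, and the one place demanding genuine care is the eta-quotient bookkeeping for (\ref{equation:equation:D2-psi-id2-thetaID}). Its second summand carries an explicit factor $q$; under $\Theta_m=q^{-m/24}\eta(m\tau)$ the quotient $\Theta_{2}^5\Theta_{8}\Theta_{12}\Theta_{24}/(\Theta_{1}\Theta_{3}\Theta_{4}^5\Theta_{6})$ leaves a factor $q^{-1}$ (since $\sum_m m\,r_m=24$), and the prefactor $q$ is exactly what cancels it so that the term becomes a bona fide eta quotient of weight zero. This absorption must be carried out \emph{before} \cite[Theorem 18]{Rob} is applied, since otherwise the invariant order at $i\infty$ is mistallied and the Valence bound comes out wrong; this is the main obstacle I anticipate. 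Beyond it, the only remaining cost is the size of the level-$24$ cusp data and the number of $q$-coefficients that step (v) must verify, both handled mechanically by the packages. As a sanity check one could also attempt a direct derivation of these ``$f_1+f_2=1$'' relations from the three-term Weierstrass identity (\ref{equation:3termWeier}), but the modular route above is the uniform one consistent with Proposition~\ref{proposition:finalTheta-ids}.
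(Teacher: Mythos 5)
Your proposal matches the paper's proof in essence: the paper omits the details but states explicitly that both technical lemmas are proved by the same five-step modular-function procedure (normalization, \cite[Theorem 18]{Rob}, cusp data via \cite[Corollary 4]{CKP}, invariant orders via \cite[Lemma 3.2]{Biag}, the Valence Formula bound, and a finite $q$-expansion check with Garvan's packages) used for Proposition \ref{proposition:finalTheta-ids}, which is exactly what you describe. Your additional observation that (\ref{equation:D2-lambda-id3-thetaID}) is obtained from (\ref{equation:equation:D2-psi-id1-thetaID}) by the substitution $q\mapsto q^2$ is correct and is a small economy the paper does not record, but it does not change the method.
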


\begin{lemma}  We have the following theta function identities
{\allowdisplaybreaks \begin{gather}
\frac{\Theta_{1}^2\Theta_{4}^4\Theta_{6}\Theta_{12}^2}
{\Theta_{2}^5\Theta_{3}^2\Theta_{8}\Theta_{24}}+
2q\frac{\Theta_{1}^2\Theta_{4}\Theta_{6}^2\Theta_{8}\Theta_{24}}
{\Theta_{2}^4\Theta_{3}^2\Theta_{12}}
-1=0,\label{equation:rho-id-3-thetaID}\\
\frac{\Theta_{1}\Theta_{6}^4\Theta_{8}^{3}}
{\Theta_{3}^3\Theta_{4}^4\Theta_{24}}
+q\frac{\Theta_{2}^5\Theta_{8}\Theta_{12}\Theta_{24}}
{\Theta_{1}\Theta_{3}\Theta_{4}^5\Theta_{6}}
-1=0,\label{equation:lambda-id-1-thetaID}\\
\frac{\Theta_{2}^3\Theta_{12}^3\Theta_{16}^2\Theta_{24}^3}
{\Theta_{1}\Theta_{3}\Theta_{4}\Theta_{6}^3\Theta_{8}^3\Theta_{48}^2}
-q\frac{\Theta_{4}^2\Theta_{12}^4\Theta_{48}}
{\Theta_{3}^2\Theta_{6}^2\Theta_{8}^2\Theta_{24}}
\cdot \left ( 
\frac{\Theta_{1,12}\Theta_{20,48}}{\Theta_{5,12}\Theta_{2,24}}
+q^4
\frac{\Theta_{5,12}\Theta_{4,48}}{\Theta_{1,12}\Theta_{10,24}}\right ) 
-1=0, \label{equation:lambda-id-2-thetaID}\\
q\frac{\Theta_{4}^4\Theta_{6}\Theta_{24}^{2}}
{\Theta_{1}\Theta_{2}\Theta_{3}\Theta_{8}^{2}\Theta_{12}^2}
-\frac{\Theta_{2}\Theta_{8}^2\Theta_{12}^4}
{\Theta_{1}\Theta_{3}\Theta_{4}^2\Theta_{6}\Theta_{24}^2}
+1=0,\label{equation:lambda-id-3-thetaID}\\
4q\frac{\Theta_{1}^4\Theta_{4}^2\Theta_{8}^4}{\Theta_{2}^{10}}
+\frac{\Theta_{1}^4\Theta_{4}^{14}}{\Theta_{8}^4\Theta_{2}^{14}}
-1=0,\label{equation:Rphi-id-3-thetaID}\\
\frac{\Theta_{1}^2\Theta_{6}^3\Theta_{8}^2\Theta_{12}^3}
{\Theta_{2}^3\Theta_{3}^2\Theta_{4}^3\Theta_{24}^2}
  +2q\frac{\Theta_{1}\Theta_{3}\Theta_{8}\Theta_{24}}{\Theta_{2}^2\Theta_{4}^{2}} 
- 1=0,\label{equation:psiBar-id-1-thetaID}\\
q\frac{\Theta_{1}^2\Theta_{4}^3\Theta_{6}^5\Theta_{24}^2}
{\Theta_{2}^5\Theta_{3}^2\Theta_{8}^2\Theta_{12}^3}
 +\frac{\Theta_{1}\Theta_{3}\Theta_{4}\Theta_{12}^3}
 {\Theta_2^3\Theta_6\Theta_8\Theta_{24}}
 -1=0.\label{equation:psiBar-id-2-thetaID}
\end{gather}}%
\end{lemma}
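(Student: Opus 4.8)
The plan is to establish all seven identities \eqref{equation:rho-id-3-thetaID}--\eqref{equation:psiBar-id-2-thetaID} by the modular-function argument already used for Proposition \ref{proposition:finalTheta-ids}, now in its $n=2$ incarnation, implemented through Frank Garvan's \emph{qseries} and \emph{thetaids} packages \cite{FG}. Each identity is presented in the normalized shape $g(\tau)=0$, and writing $g(\tau)=\sum_j f_j(\tau)-1$ displays it as a finite integer-coefficient combination of quotients of the functions $\Theta_m$ and $\Theta_{a,m}$. The whole task is then to certify that each summand $f_j$ is a weight-zero modular function on a suitable $\Gamma_1(N)$ and to invoke the Valence Formula to bound how many $q$-coefficients of $g$ must be checked.

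First I would read off the level of each identity from the moduli that occur. The five identities \eqref{equation:rho-id-3-thetaID}, \eqref{equation:lambda-id-1-thetaID}, \eqref{equation:lambda-id-3-thetaID}, \eqref{equation:psiBar-id-1-thetaID}, and \eqref{equation:psiBar-id-2-thetaID} involve only moduli dividing $24$ and live at level $24$; \eqref{equation:Rphi-id-3-thetaID} uses only moduli dividing $8$ and lives at level $8$; and \eqref{equation:lambda-id-2-thetaID}, which carries $\Theta_{16}$, $\Theta_{48}$, $\Theta_{20,48}$, and $\Theta_{4,48}$, is forced up to level $48$. For each $f_j$ on its relevant $\Gamma_1(N)$ I would apply the criterion of \cite[Theorem 18]{Rob}, which amounts to checking that the vector of exponents in the generalized eta/theta product satisfies the requisite weight-zero and congruence conditions so that the multiplier is trivial.

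Then, exactly as in the five-step template of Proposition \ref{proposition:finalTheta-ids}, I would use \cite[Corollary 4]{CKP} to list a set $\mathcal{S}_N$ of inequivalent cusps of $\Gamma_1(N)$ and their fan widths, use \cite[Lemma 3.2]{Biag} to compute the invariant order of each $f_j$ at each cusp, and assemble the bound $B=\sum_{s\ne i\infty}\min(\{\mathrm{ORD}(f_j,s,\Gamma_1(N))\}\cup\{0\})$. By the Valence Formula \cite[Corollary 2.5]{FG} the identity $g(\tau)=0$ holds precisely when $\mathrm{ORD}(g,i\infty,\Gamma_1(N))>-B$, so the final step is to expand each $f_j$ as a $q$-series out through $\mathcal{O}(q^{-B})$ and confirm that the coefficients of $g$ all vanish. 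For the level-$8$ identity \eqref{equation:Rphi-id-3-thetaID} one could instead give a short classical proof straight from \eqref{equation:theta-mod} and \eqref{equation:theta-roots}, but carrying the uniform method through is cleaner.

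I expect \eqref{equation:lambda-id-2-thetaID} to be the main obstacle. It is the only level-$48$ identity, so $\Gamma_1(48)$ presents many more inequivalent cusps than the level-$24$ cases and the valence bound $B$ grows accordingly, lengthening the required $q$-expansion check. More seriously, \eqref{equation:lambda-id-2-thetaID} is not a single combination of eta-like quotients: the factor $\bigl(\Theta_{1,12}\Theta_{20,48}/(\Theta_{5,12}\Theta_{2,24})+q^4\,\Theta_{5,12}\Theta_{4,48}/(\Theta_{1,12}\Theta_{10,24})\bigr)$ mixes genuine generalized theta functions $\Theta_{a,m}$ with $a\not\equiv 0$, so after expanding the bracket the identity splits into summands whose individual modularity on $\Gamma_1(48)$ and whose cusp orders must each be verified before the Valence Formula applies. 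Discharging the \cite[Theorem 18]{Rob} hypotheses and the \cite[Lemma 3.2]{Biag} order computations for these mixed products is the delicate bookkeeping that the Maple packages automate but that forms the real content of the proof.
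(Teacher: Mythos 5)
Your proposal matches the paper exactly: the paper states only that ``the proofs of both lemmas follow that of the proof of Proposition \ref{proposition:finalTheta-ids}, so we will not include them,'' i.e.\ the same five-step Garvan \emph{qseries}/\emph{thetaids} routine (Robins' criterion for modularity on $\Gamma_1(N)$, cusps and fan widths via Cho--Koo--Park, invariant orders via Biagioli, the Valence Formula bound $B$, and a finite $q$-expansion check) that you describe. Your additional remarks on the levels $N=8,24,48$ and on \eqref{equation:lambda-id-2-thetaID} being the delicate case are sensible elaborations of that same method, not a different route.
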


\begin{proof}[Proof of Proposition  \ref{proposition:3B-ids}] 
We prove (\ref{equation:D2-B-id1}).  We substitute into (\ref{equation:msplit2}) to obtain
\begin{align*}
D_{2}(1,q^3,q^8;q^{4})&=\frac{q^8(q^8;q^8)_{\infty}^3}{\Theta(q^3;q^4)\Theta(q^8;q^{16})}\\
&\qquad \cdot \Big [
\frac{\Theta(-q^{15};q^8)\Theta(q^{-2};q^{16})}{\Theta(-q^{12};q^8)\Theta(q^3;q^8)}
-q^3 \frac{\Theta(-q^{19};q^8)\Theta(q^{6};q^{16})}{\Theta(-q^{12};q^8)\Theta(q^7;q^8)}\Big ].
\end{align*}
Using (\ref{equation:theta-elliptic}) and (\ref{equation:theta-inv}) yields
{\allowdisplaybreaks \begin{align*}
D_{2}(1,q^3,q^8;q^{4})
&=-q\frac{(q^8;q^8)_{\infty}^3}{\Theta(q;q^4)\Theta(q^8;q^{16})}\frac{1}{\Theta(-q^{4};q^8)}\\
&\qquad \cdot \Big [q^{2}
\frac{\Theta(-q;q^8)\Theta(q^{2};q^{16})}{\Theta(q^3;q^8)}
 + \frac{\Theta(-q^{3};q^8)\Theta(q^{6};q^{16})}{\Theta(q;q^8)}\Big ]\\
&=-q\frac{(q^8;q^8)_{\infty}^3}{\Theta(q;q^4)\Theta(q^8;q^{16})}\frac{1}{\Theta(-q^{4};q^8)}
\frac{1}{\Theta(q;q^8)\Theta(q^3;q^8)}\\
& \qquad \cdot \Big [q^{2}
\Theta(q;q^8)\Theta(-q;q^8)\Theta(q^{2};q^{16})
 + \Theta(q^3;q^8)\Theta(-q^{3};q^8)\Theta(q^{6};q^{16})\Big ].
 \end{align*}}%
 Using (\ref{equation:theta-mod}), (\ref{equation:theta-roots}), and then (\ref{equation:H1Thm1.1}) gives
 \begin{align*}
 D_{2}(1,q^3,q^8;q^{4})
&=-q\frac{(q^8;q^8)_{\infty}^3}{\Theta(q;q^4)\Theta(q^8;q^{16})}\frac{1}{\Theta(-q^{4};q^8)}
\frac{1}{\Theta(q;q^4)}\frac{\Theta_{4}}{\Theta_{8}^2}\frac{\Theta_{8}^2}{\Theta_{16}}\\
& \qquad \cdot \Big [q^{2}
\Theta(q^{2};q^{16})^2
 + \Theta(q^{6};q^{16})^2\Big ]\\
&=-q\frac{(q^8;q^8)_{\infty}^3}{\Theta(q;q^4)^2\Theta(q^8;q^{16})}\frac{1}{\Theta(-q^{4};q^8)}
\frac{\Theta_{4}}{\Theta_{16}}
 \cdot \Theta(-q^2;q^8)\Theta(q^4;q^8)\\
&=-q \frac{\Theta_{2}\Theta_{4}^5\Theta_{16}^2}{\Theta_{1}^2\Theta_{8}^5},
\end{align*}
where the last line follows from elementary product rearragements.

We prove (\ref{equation:D2-B-id2}).  We substitute into (\ref{equation:msplit2}) where we use the form of the single-quotient evaluation \cite[Corollary 3.7]{HM}.  We then have
\begin{equation*}
D_{2}(1,q^{3},q^{12};q^{4})
=-\frac{\Theta_{8}\Theta_{16}\Theta(-q^{6};q^{4})\Theta(-q^{9};q^{4})}
{\Theta(q^3;q^{4})\Theta(q^{12};q^{16})\Theta(-q^{16};q^{8})}.
\end{equation*}
Using (\ref{equation:theta-elliptic}), (\ref{equation:theta-inv}), and elementary product rearrangements yields
\begin{align*}
D_{2}(1,q^{3},q^{12};q^{4})
&=-\frac{1}{2}\frac{\Theta_{2}\Theta_{4}^3\Theta_{8}}{\Theta_{1}^2\Theta_{16}^2}.\qedhere
\end{align*}
\end{proof}

\begin{proof}[Proof of Proposition \ref{proposition:3rho-ids}] 

We prove (\ref{equation:D2-rho-id1}).  Substituting into (\ref{equation:msplit2}) and using (\ref{equation:theta-elliptic}) yields
\begin{align*}
D_{2}(1,q,q^{12};q^{6})
&=-\frac{q(q^{12};q^{12})_{\infty}^3}{\Theta(q;q^6)\Theta(q^{12};q^{24})}\frac{1}{\Theta(-q^{6};q^{12})}\\
&\qquad \cdot \Big [
\frac{\Theta(-q^{7};q^{12})\Theta(q^{10};q^{24})}{\Theta(q;q^{12})}
+q^{4} \frac{\Theta(-q;q^{12})\Theta(q^{2};q^{24})}{\Theta(q^7;q^{12})}\Big ].
\end{align*}
Combining fractions and using (\ref{equation:theta-mod}) and (\ref{equation:theta-roots}) produces
{\allowdisplaybreaks
\begin{align*}
D_{2}(1,q,q^{12};q^{6})
&=-\frac{q(q^{12};q^{12})_{\infty}^3}{\Theta(q;q^6)\Theta(q^{12};q^{24})}\frac{1}{\Theta(-q^{6};q^{12})}
\frac{1}{\Theta(q;q^{6})}\frac{\Theta_{6}}{\Theta_{12}^2}\frac{\Theta_{12}^2}{\Theta_{24}}\\
&\qquad \cdot \Big [
\Theta(q^{14};q^{24})\Theta(q^{10};q^{24})
 +q^{4}\Theta(q^2;q^{24})\Theta(q^{2};q^{24})\Big ].
 \end{align*}
Identity (\ref{equation:H1Thm1.1}) and elementary product rearrangements gives the result
\begin{align*}
D_{2}(1,q,q^{12};q^{6})
&=-\frac{q(q^{12};q^{12})_{\infty}^3}{\Theta(q;q^6)^2\Theta(q^{12};q^{24})}\frac{1}{\Theta(-q^{6};q^{12})}
\frac{\Theta_{6}}{\Theta_{24}}
 \cdot \Theta(-q^4;q^{12})\Theta(q^6;q^{12})\\
&=-q \frac{\Theta_{2}^2\Theta_{3}^2\Theta_{6}\Theta_{8}\Theta_{24}}{\Theta_{1}^2\Theta_{4}\Theta_{12}^3}.
\end{align*}}%

We prove (\ref{equation:D2-rho-id2}).   Substituting into (\ref{equation:msplit2}) and using (\ref{equation:theta-elliptic}) gives
 \begin{align*}
D_2(1,q,q^{18};q^{6})
&=\frac{(q^{12};q^{12})_{\infty}^3}{\Theta(q;q^6)\Theta(q^{18};q^{24})}\frac{1}{\Theta(-1;q^{12})}\\
&\qquad \cdot \Big [
-\frac{\Theta(-q;q^{12})\Theta(q^{16};q^{24})}{\Theta(q;q^{12})}
 + q\frac{\Theta(-q^{7};q^{12})\Theta(q^{4};q^{24})}{\Theta(q^7;q^{12})}\Big ].
 \end{align*}
 Combining fractions and using (\ref{equation:theta-mod}) and (\ref{equation:theta-roots}) yields
 \begin{align*}
D_2(1,q,q^{18};q^{6})
&=-\frac{(q^{12};q^{12})_{\infty}^3}{\Theta(q;q^6)\Theta(q^{18};q^{24})}\frac{1}{\Theta(-1;q^{12})}
\frac{1}{\Theta(q;q^{12})\Theta(q^7;q^{12})}\\
&\qquad \cdot \Big [
\Theta(q^7;q^{12})\Theta(-q;q^{12})\Theta(q^{16};q^{24})
 - q\Theta(q;q^{12})\Theta(-q^{7};q^{12})\Theta(q^{4};q^{24})\Big ]\\
&=-\frac{(q^{12};q^{12})_{\infty}^3}{\Theta(q;q^6)^2\Theta(q^{18};q^{24})}\frac{1}{\Theta(-1;q^{12})}
\frac{\Theta_{6}}{\Theta_{12}^2}\frac{\Theta_{24}}{\Theta_{12}^2}\\
&\qquad \cdot \Big [
\Theta(q^7;q^{12})\Theta(-q;q^{12})\Theta(-q^{8};q^{12})\Theta(q^{8};q^{12})\\
&\qquad \qquad  - q\Theta(q;q^{12})\Theta(-q^{7};q^{12})\Theta(q^{2};q^{12})\Theta(-q^{2};q^{12})\Big ].
\end{align*}
The result follows from the Weierstrass relation (\ref{equation:3termWeier}) with the substitutions $q\to q^{12}$, $(a,b,c,d)\to (iq^6,iq^4,-iq^3,-iq^2)$ and elementary product rearrangements:
 \begin{align*}
D_2(1,q,q^{18};q^{6})
&=-\frac{(q^{12};q^{12})_{\infty}^3}{\Theta(q;q^6)^2\Theta(q^{18};q^{24})}\frac{1}{\Theta(-1;q^{12})}
\frac{\Theta_{6}}{\Theta_{12}^2}\frac{\Theta_{24}}{\Theta_{12}^2}\\
&\qquad \cdot 
\Theta(-q^2;q^{12})\Theta(q^6;q^{12})\Theta(q^{3};q^{12})\Theta(-q^{3};q^{12})\\
&=-\frac{1}{2}\frac{\Theta_{2}\Theta_{3}^2\Theta_{4}^2}
{\Theta_{1}^2\Theta_{8}\Theta_{24}}.\qedhere
\end{align*}

\end{proof}

\begin{proof}[Proof of Proposition  \ref{proposition:3lambda-ids}]  
We prove (\ref{equation:D2-lambda-id1}).  Substituting into (\ref{equation:msplit2}), using (\ref{equation:theta-elliptic}), and combining fractions gives
\begin{align*}
D_2(1,-q^2,q^{12};q^{6})
&=\frac{q^{2}(q^{12};q^{12})_{\infty}^3}{\Theta(-q^{2};q^6)\Theta(q^{12};q^{24})}\frac{1}{\Theta(-q^{6};q^{12})}
\frac{1}{\Theta(-q^{2};q^{12})\Theta(-q^8;q^{12})}\\
&\qquad \cdot \Big [
\Theta(-q^8;q^{12})\Theta(q^{8};q^{12})\Theta(q^{8};q^{24})
 +q^{2} \Theta(-q^{2};q^{12})\Theta(q^{2};q^{12})\Theta(q^{4};q^{24})\Big ].
\end{align*}
Employing (\ref{equation:theta-mod}) and (\ref{equation:theta-roots}) gives
\begin{align*}
D_2(1,-q^2,q^{12};q^{6})
&=\frac{q^{2}(q^{12};q^{12})_{\infty}^3}{\Theta(-q^{2};q^6)^2\Theta(q^{12};q^{24})}\frac{1}{\Theta(-q^{6};q^{12})}
\frac{\Theta_{6}}{\Theta_{12}^2}\frac{\Theta_{12}^2}{\Theta_{24}}\\
&\qquad \cdot \Big [
\Theta(q^{16};q^{24})^2\
 +q^{2} \Theta(q^{4};q^{24})^2\Big ].
 \end{align*}
 Identity (\ref{equation:H1Thm1.1}) and product rearrangements yields
 \begin{align*}
D_2(1,-q^2,q^{12};q^{6})
 &=\frac{q^{2}(q^{12};q^{12})_{\infty}^3}{\Theta(-q^{2};q^6)^2\Theta(q^{12};q^{24})}\frac{\Theta(-q^{2};q^{12})\Theta(q^{6};q^{12})}{\Theta(-q^{6};q^{12})}
\frac{\Theta_{6}}{\Theta_{24}} 
=q^2\frac{\Theta_{2}\Theta_{6}^2\Theta_{24}^3}{\Theta_{8}\Theta_{12}^4}.
\end{align*}

We prove (\ref{equation:D2-lambda-id2}).  Substituting into (\ref{equation:msplit2}), using (\ref{equation:theta-elliptic}), and combining fractions gives
\begin{align*}
D_2(1,-q,q^{12};q^{6})
&=\frac{q(q^{12};q^{12})_{\infty}^3}{\Theta(-q;q^6)\Theta(q^{12};q^{24})}\frac{1}{\Theta(-q^{6};q^{12})}
\frac{1}{\Theta(-q;q^{12})\Theta(-q^7;q^{12})}\\
&\qquad \cdot \Big [
\Theta(-q^7;q^{12})\Theta(q^{7};q^{12})\Theta(q^{10};q^{24})\\
&\qquad \qquad   +q^{4} \Theta(-q;q^{12})\Theta(q;q^{12})\Theta(q^{2};q^{24})\Big ].
\end{align*}
Again using (\ref{equation:theta-mod}) and (\ref{equation:theta-roots}) gives
\begin{align*}
D_2(1,-q,q^{12};q^{6})
&=\frac{q(q^{12};q^{12})_{\infty}^3}{\Theta(-q;q^6)^2\Theta(q^{12};q^{24})}\frac{1}{\Theta(-q^{6};q^{12})}
\frac{\Theta_{6}}{\Theta_{12}^2}\frac{\Theta_{12}^2}{\Theta_{24}}\\
&\qquad  \cdot \Big [
\Theta(q^{14};q^{24})^2 +q^{4} \Theta(q^2;q^{24})^2\Big ].
 \end{align*}
 Identity (\ref{equation:H1Thm1.1}) and product rearrangements give the result:
{\allowdisplaybreaks  \begin{align*}
 D_2(1,-q,q^{12};q^{6})
&=\frac{q(q^{12};q^{12})_{\infty}^3}{\Theta(-q;q^6)^2\Theta(q^{12};q^{24})}\frac{\Theta(-q^4;q^{12})\Theta(q^6;q^{12})}{\Theta(-q^{6};q^{12})}
\frac{\Theta_{6}}{\Theta_{24}} 
=q\frac{\Theta_{1}^2\Theta_{4}\Theta_{6}^7\Theta_{8}\Theta_{24}}{\Theta_{2}^4\Theta_{3}^2\Theta_{12}^5}.
\end{align*}}%

We prove (\ref{equation:D2-lambda-id3}). Using (\ref{equation:msplit2}) with (\ref{equation:theta-elliptic}) yields
\begin{align*}
D_2(1,-q^2,-1;q^{6})
&=-\frac{(q^{12};q^{12})_{\infty}^3}{\Theta(-q^2;q^6)\Theta(-1;q^{24})}\frac{1}{\Theta(q^6;q^{12})}\\
&\qquad \cdot \Big [
\frac{\Theta(-q^8;q^{12})\Theta(-q^4;q^{24})}{\Theta(-q^2;q^{12})}
 + \frac{\Theta(-q^{2};q^{12})\Theta(-q^{16};q^{24})}{\Theta(-q^8;q^{12})}\Big ].
 \end{align*}
We want to write the expression in brackets as a single quotient of theta functions.  In particular, we want to show
\begin{equation*}
\frac{\Theta(-q^8;q^{12})\Theta(-q^4;q^{24})}{\Theta(-q^2;q^{12})}
 + \frac{\Theta(-q^{2};q^{12})\Theta(-q^{16};q^{24})}{\Theta(-q^8;q^{12})}
  =2\frac{\Theta_{6}^2\Theta_{16}^2}{\Theta_{4}\Theta_{8}\Theta_{12}}.
\end{equation*} 
Rewriting the above expression using (\ref{equation:theta-mod}), we have
 \begin{equation*}
 \frac{\Theta_{2}\Theta_{8}^4\Theta_{12}^4\Theta_{48}}{\Theta_{4}^4\Theta_{6}\Theta_{16}\Theta_{24}^3}
 +\frac{\Theta_{4}^3\Theta_{6}\Theta_{16}\Theta_{24}^4}{\Theta_{2}\Theta_{8}^3\Theta_{12}^3\Theta_{48}}
 -2\frac{\Theta_{6}^2\Theta_{16}^2}{\Theta_{4}\Theta_{8}\Theta_{12}}=0,
 \end{equation*}
but this is just (\ref{equation:D2-lambda-id3-thetaID}).  After product rearrangements, it follows that
\begin{align*}
D_2(1,-q^2,-1;q^{6})
 &=-\frac{(q^{12};q^{12})_{\infty}^3}{\Theta(-q^2;q^6)\Theta(-1;q^{24})}\frac{1}{\Theta(q^6;q^{12})}
  \cdot 2\frac{\Theta_{6}^2\Theta_{16}^2}{\Theta_{4}\Theta_{8}\Theta_{12}}
  =-\frac{\Theta_{2}\Theta_{12}^4\Theta_{16}^2\Theta_{24}}{\Theta_{4}^2\Theta_{6}^2\Theta_{8}\Theta_{48}^2}.\qedhere
\end{align*}

\end{proof}

\begin{proof}[Proof of Proposition  \ref{proposition:3Rphi-ids}]   We prove (\ref{equation:D2-Rphi-id1}).  Using (\ref{equation:msplit2}) and simplifying with (\ref{equation:theta-elliptic}), we notice that the two theta quotients are identical.  So we can add them giving
\begin{align*}
D_{2}(1,q,q^4;q^2)
&=-2\frac{q(q^4;q^4)_{\infty}^3}{\Theta(q;q^2)\Theta(q^4;q^8)}\frac{1}{\Theta(-q^2;q^4)}
 \cdot\frac{\Theta(-q;q^4)\Theta(q^{2};q^{8})}{\Theta(q;q^4)}
 =-2q\frac{\Theta_{2}^7\Theta_{8}^4}{\Theta_{1}^4\Theta_{4}^6}.
\end{align*}

We prove (\ref{equation:D2-Rphi-id2}).  Again using (\ref{equation:msplit2}) and simplifying with (\ref{equation:theta-elliptic}), we now notice that one of the two theta quotients vanishes.  This gives
\begin{align*}
D_{2}(1,q,q^{-2};q^2)
&=-\frac{(q^4;q^4)_{\infty}^3}{\Theta(q;q^2)\Theta(q^{2};q^8)}\frac{1}{\Theta(-1;q^4)}
 \cdot 
\frac{\Theta(-q;q^4)\Theta(q^4;q^{8})}{\Theta(q;q^4)}
=-\frac{1}{2}\frac{\Theta_{2}^3\Theta_{4}^6}{\Theta_{1}^4\Theta_{8}^4}.\qedhere
\end{align*}
\end{proof}

\begin{proof}[Proof of Proposition  \ref{proposition:3Rxi-ids}]  We prove (\ref{equation:D2-Rxi-id1})  Using (\ref{equation:msplit2}), we see that one of the two theta quotients vanishes.  Simplifying with (\ref{equation:theta-elliptic}) then gives
\begin{equation*}
D_2(1,-1,q^2;q)
=\frac{(q^2;q^2)_{\infty}^3}{\Theta(-1;q)\Theta(q^2;q^4)}
 \cdot 
\frac{\Theta(q;q^2)\Theta(q^{2};q^{4})}{\Theta(-q;q^2)\Theta(-1;q^2)}
=\frac{1}{4}\frac{\Theta_{1}^5}{\Theta_{2}^4}.
\end{equation*}

We prove (\ref{equation:D2-Rxi-id2})  Again using (\ref{equation:msplit2}), we see that one of the two theta quotients vanishes.  Simplifying with (\ref{equation:theta-elliptic}) then yields
\begin{equation*}
D_2(1,-1,q^{-1};q)
=-\frac{(q^2;q^2)_{\infty}^3}{\Theta(-1;q)\Theta(q;q^4)}
\frac{\Theta(q;q^2)\Theta(q^3;q^{4})}{\Theta(-1;q^2)\Theta(-q;q^2)}
=-\frac{1}{4}\frac{\Theta_{1}^5}{\Theta_{2}^4}.\qedhere
\end{equation*}

\end{proof}

\begin{proof}[Proof of Proposition  \ref{proposition:4psi-ids}].   We prove (\ref{equation:D2-psi-id1}).  Substituting into  (\ref{equation:msplit2}) and simplifying gives
\begin{align*}
D_2(1,-q,-1;q^{3})
&=-\frac{(q^6;q^6)_{\infty}^3}{\Theta(-q;q^3)\Theta(-1;q^{12})}\frac{1}{\Theta(q^3;q^6)}\\
&\qquad \cdot \Big [
\frac{\Theta(-q^4;q^6)\Theta(-q^2;q^{12})}{\Theta(-q;q^6)}
 + \frac{\Theta(-q;q^6)\Theta(-q^8;q^{12})}{\Theta(-q^4;q^6)}\Big ].
\end{align*}
Hence we need to prove
\begin{align*}
-\frac{(q^6;q^6)_{\infty}^3}{\Theta(-q;q^3)\Theta(-1;q^{12})}\frac{1}{\Theta(q^3;q^6)}
 \cdot \Big [
\frac{\Theta(-q^4;q^6)\Theta(-q^2;q^{12})}{\Theta(-q;q^6)}
& + \frac{\Theta(-q;q^6)\Theta(-q^8;q^{12})}{\Theta(-q^4;q^6)}\Big ]\\
&\qquad  =-\frac{\Theta_{1}\Theta_{6}^4\Theta_{8}^2\Theta_{12}}
{\Theta_{2}^2\Theta_{3}^2\Theta_{4}\Theta_{24}^2}.
\end{align*}
Using (\ref{equation:theta-mod}) and simplifying, we see that the above is equivalent to 
\begin{equation*}
-\frac{1}{2}\frac{\Theta_{1}^2\Theta_{4}^4\Theta_{6}^9}{\Theta_{2}^5\Theta_{3}^5\Theta_{8}\Theta_{12}^2\Theta_{24}}
-\frac{1}{2}\frac{\Theta_{2}^2\Theta_{6}^2\Theta_{8}\Theta_{12}^5}{\Theta_{3}^3\Theta_{4}^3\Theta_{24}^3}
=-\frac{\Theta_{1}\Theta_{6}^4\Theta_{8}^2\Theta_{12}}
{\Theta_{2}^2\Theta_{3}^2\Theta_{4}\Theta_{24}^2},
\end{equation*}
but this is just (\ref{equation:equation:D2-psi-id1-thetaID}).  For the proof of (\ref{equation:D2-psi-id2}), we  
use (\ref{equation:msplit2}), to obtain
\begin{align*}
D_2(1,-q,-q^{6};q^3)
&=\frac{q(q^6;q^6)_{\infty}^3}{\Theta(-q;q^3)\Theta(-q^6;q^{12})}\frac{1}{\Theta(q^{3};q^6)}\\
&\qquad \cdot \Big [
\frac{\Theta(-q^{4};q^6)\Theta(-q^{4};q^{12})}{\Theta(-q;q^6)}
 + q\frac{\Theta(-q;q^6)\Theta(-q^2;q^{12})}{\Theta(-q^4;q^6)}\Big ].
\end{align*}
So we need to show
\begin{align*}
\frac{q(q^6;q^6)_{\infty}^3}{\Theta(-q;q^3)\Theta(-q^6;q^{12})}\frac{1}{\Theta(q^{3};q^6)}
& \cdot \Big [
\frac{\Theta(-q^{4};q^6)\Theta(-q^{4};q^{12})}{\Theta(-q;q^6)}
 + q\frac{\Theta(-q;q^6)\Theta(-q^2;q^{12})}{\Theta(-q^4;q^6)}\Big ]\\
 &\qquad =q \frac{\Theta_{1}\Theta_{4}^5\Theta_{6}^6\Theta_{24}^2}
{\Theta_{2}^4\Theta_{3}^2\Theta_{8}^2\Theta_{12}^5},
\end{align*}
which upon using (\ref{equation:theta-mod}) and simplifying is equivalent to 
\begin{equation}
\frac{\Theta_{1}^2\Theta_{4}\Theta_{6}^{10}\Theta_{8}\Theta_{24}}{\Theta_{2}^4\Theta_{3}^5\Theta_{12}^5}
+q\frac{\Theta_{2}\Theta_{6}^5\Theta_{24}^3}{\Theta_{3}^3\Theta_{8}\Theta_{12}^4}
= \frac{\Theta_{1}\Theta_{4}^5\Theta_{6}^6\Theta_{24}^2}
{\Theta_{2}^4\Theta_{3}^2\Theta_{8}^2\Theta_{12}^5},
\end{equation}
but this is just (\ref{equation:equation:D2-psi-id2-thetaID}).
\end{proof}

\section{Proof of identity (\ref{equation:RLN6-A})}\label{section:id0}

We rewrite left-hand side of identity (\ref{equation:RLN6-A}).  We first recall the Appell function forms (\ref{equation:6th-phi(q)}) and (\ref{equation:6th-psi(q)}), and we then use properties (\ref{equation:mxqz-fnq-newz}) and (\ref{equation:changing-z}) to obtain
{\allowdisplaybreaks \begin{align*}
\phi(q^9)&-\psi(q)-q^{-3}\psi(q^9)\\
&=2m(q^9,-1;q^{27})-m(1,-q;q^3)-q^{-3}m(1,-q^{9};q^{27})\\
&=m(q^9,-1;q^{27})+m(q^9,-q^{-9};q^{27})-m(1,-q;q^3)-q^{-3}m(1,-q^{9};q^{27})\\
&=m(q^9,-q^{9};q^{27})+\frac{\Theta_{27}^3\Theta_{9}^2}{\overline{\Theta}_{0,27}\overline{\Theta}_{9,27}^3}\\
&\qquad +m(q^9,-q^{-9};q^{27})-m(1,-q;q^3)-q^{-3}m(1,-q^{9};q^{27}).
\end{align*}}%
Next, property (\ref{equation:mxqz-flip}), Definition (\ref{equation:D3-def}), and Lemma  \ref{lemma:dTerm-id1} yield
{\allowdisplaybreaks \begin{align*}
\phi(q^9)&-\psi(q)-q^{-3}\psi(q^9)\\
&=m(q^9,-q^{9};q^{27})+q^{-9}m(q^{-9},-q^{9};q^{27})-m(1,-q;q^3)-q^{-3}m(1,-q^{9};q^{27})\\
&\quad +\frac{\Theta_{27}^3\Theta_{9}^2}{\overline{\Theta}_{0,27}\overline{\Theta}_{9,27}^3}\\
&=-D_3(1,-q,-q^9;q^{3}) +\frac{\Theta_{27}^3\Theta_{9}^2}{\overline{\Theta}_{0,27}\overline{\Theta}_{9,27}^3}.
\end{align*}%
The result follows from identity (\ref{equation:finalTheta-id1}).

\section{Proof of identity (\ref{equation:newSixth-1})}\label{section:id1}
We take a slightly different approach.  We use Definition (\ref{equation:D3-def}), properties (\ref{equation:mxqz-flip}) and (\ref{equation:changing-z}), and Appell function forms (\ref{equation:6th-rho(q)}) and (\ref{equation:6th-sigma(q)}) to obtain
{\allowdisplaybreaks \begin{align*}
D_3(1,q,q^9;q^6)
&=m(1,q;q^6)-m(q^{18},q^{9};q^{54})+q^{-6}m(1,q^{9};q^{54})-q^{-18}m(q^{-18},q^{9};q^{54})\\
&=m(1,q;q^6)-m(q^{18},q^{9};q^{54})+q^{-6}m(1,q^{9};q^{54})-m(q^{18},q^{-9};q^{54})\\
&=m(1,q;q^6)-2m(q^{18},q^{9};q^{54})+q^{-6}m(1,q^{9};q^{54})
-\frac{\Theta_{54}^3\Theta_{18,54}^2}{\Theta_{9,54}^3\Theta_{27,54}}\\
&=-q\rho(q)+2\sigma(q^9)-q^3\rho(q^9)-\frac{\Theta_{54}^3\Theta_{18,54}^2}{\Theta_{9,54}^3\Theta_{27,54}}.
\end{align*}}%
Rearranging terms, we have
\begin{align*}
q\rho(q)-2\sigma(q^9)+q^3\rho(q^9)
=-D_3(1,q,q^9;q^6)-\frac{\Theta_{54}^3\Theta_{18,54}^2}{\Theta_{9,54}^3\Theta_{27,54}}.
\end{align*}
The result follows from identity (\ref{equation:finalTheta-id2}).

\section{Proof of identity (\ref{equation:newSixth-2})}\label{section:id2}

Using Definition (\ref{equation:D3-def}), property (\ref{equation:mxqz-flip}), and then property (\ref{equation:changing-z}) twice produces 
{\allowdisplaybreaks \begin{align*}
D_3&(1,-q^2,-q^{27};q^6)\\
&=m(1,-q^2;q^6)-m(q^{18},-q^{27};q^{54})+q^{-6}m(1,-q^{27};q^{54})-q^{-18}m(q^{-18},-q^{27};q^{54})\\
&=m(1,-q^2;q^6)-m(q^{18},-q^{27};q^{54})+q^{-6}m(1,-q^{27};q^{54})-m(q^{18},-q^{-27};q^{54})\\
&=m(1,-q^2;q^6)-2m(q^{18},-q^{27};q^{54})+q^{-6}m(1,-q^{18};q^{54})
-q^{12}\frac{\Theta_{54}^3\Theta_{9,54}^2}{\overline{\Theta}_{27,54}^2\overline{\Theta}_{18,54}^2}.
\end{align*}}%
Likewise, using Definition (\ref{equation:D3-def}), property (\ref{equation:mxqz-flip}), and then property (\ref{equation:changing-z}) yields 
\begin{align*}
D_3&(1,-q,-1;q^6)\\
&=m(1,-q;q^6)-m(q^{18},-1;q^{54})+q^{-6}m(1,-1;q^{54})-q^{-18}m(q^{-18},-1;q^{54})\\
&=m(1,-q;q^6)-2m(q^{18},-1;q^{54})+q^{-6}m(1,-1;q^{54})\\
&=m(1,-q;q^6)-2m(q^{18},-1;q^{54})+q^{-6}m(1,-q^9;q^{54})
+q^{-6}\frac{\Theta_{54}^3\Theta_{9,54}^2}{\overline{\Theta}_{0,54}^2\overline{\Theta}_{9,54}^2}.
\end{align*}
Summing the above two expressions and rearranging terms yields
\begin{align*}
q&\lambda(q)+q^{3}\lambda(q^9)-2\mu(q^9)\\
&= D_3(1,-q^2,-q^{27};q^6)+D_3(1,-q,-1;q^6)
+q^{12}\frac{\Theta_{54}^3\Theta_{9,54}^2}{\overline{\Theta}_{27,54}^2\overline{\Theta}_{18,54}^2}
-q^{-6}\frac{\Theta_{54}^3\Theta_{9,54}^2}{\overline{\Theta}_{0,54}^2\overline{\Theta}_{9,54}^2}.
\end{align*}
The result follows by (\ref{equation:finalTheta-id3}).

\section{Proof of identity (\ref{equation:newSixth-3})}\label{section:id3}
Using Definition (\ref{equation:D3-def}), property (\ref{equation:mxqz-flip}), property (\ref{equation:changing-z}), and then the Appell function forms (\ref{equation:phibar}) and (\ref{equation:psibar}) gives
\begin{align*}
D_{3}&(1,q,-1;q^3)\\
&=m(1,q;q^3)-m(q^{9},-1;q^{27})+q^{-3}m(1,-1;q^{27})-q^{-9}m(q^{-9},-1;q^{27})\\
&=m(1,q;q^3)-2m(q^{9},-1;q^{27})+q^{-3}m(1,-1;q^{27})\\
&=m(1,q;q^3)
-2m(q^{9},-q^9;q^{27})-2\frac{\Theta_{27}^3\overline{\Theta}_{9,27}^2}{\Theta_{9}^2\overline{\Theta}_{0,27}\overline{\Theta}_{9,27}}\\
& \qquad \qquad +q^{-3}m(1,q^9;q^{27})+q^{-3}\frac{\Theta_{27}^3\overline{\Theta}_{9,27}^2}{\Theta_{9}^2\overline{\Theta}_{0,27}^2}\\
&=-2\psi_{\_}(q)+2q\frac{\Theta_{6}^3}{2\Theta_{1}\Theta_{2}}
+2\phi_{\_}(q^{9})+2q^{9}\frac{\overline{\Theta}_{27,108}^3}{\Theta_{9}\overline{\Theta}_{9,36}}
-2\frac{\Theta_{27}^3\overline{\Theta}_{9,27}^2}{\Theta_{9}^2\overline{\Theta}_{0,27}\overline{\Theta}_{9,27}}\\
& \qquad \qquad -2q^{-3}\psi_{\_}(q^{9})+2q^{6}\frac{\Theta_{54}^3}{2\Theta_{9}\Theta_{18}}
+q^{-3}\frac{\Theta_{27}^3\overline{\Theta}_{9,27}^2}{\Theta_{9}^2\overline{\Theta}_{0,27}^2}.
\end{align*}
Rearranging terms gives us
{\allowdisplaybreaks \begin{align*}
2\psi\_(q)+2q^{-3}\psi\_(q^9)-2\phi\_(q^9)&=-D_{3}(1,q,-1;q^3)+q\frac{\Theta_{6}^3}{\Theta_{1}\Theta_{2}}
+2q^{9}\frac{\overline{\Theta}_{27,108}^3}{\Theta_{9}\overline{\Theta}_{9,36}}
\\
& \qquad -2\frac{\Theta_{27}^3\overline{\Theta}_{9,27}^2}{\Theta_{9}^2\overline{\Theta}_{0,27}\overline{\Theta}_{9,27}} +q^{6}\frac{\Theta_{54}^3}{\Theta_{9}\Theta_{18}}
+q^{-3}\frac{\Theta_{27}^3\overline{\Theta}_{9,27}^2}{\Theta_{9}^2\overline{\Theta}_{0,27}^2}.
\end{align*}}%
The result follows by (\ref{equation:finalTheta-id4}).

\section{Proofs of Theorems \ref{theorem:3B-ids}--\ref{theorem:4psi-ids}}\label{section:newTheorems}

\begin{proof}[ Proof of Theorem \ref{theorem:3B-ids}]  
We prove (\ref{equation:Bid-1}) and (\ref{equation:Bid-2}).   We have
\begin{align*}
-qB(q)+2A_2(-q^4)
&=D_{2}(1,q^3,q^8;q^{4}),\\
-qB(q)-\frac{1}{2}\mu_2(q^4)
&=D_{2}(1,q^{3},q^{12};q^{4}).
\end{align*}
For the first identity, we use (\ref{equation:2nd-A(q)}) and (\ref{equation:2nd-B(q)}), and we obtain
\begin{equation*}
-qB(q)+2A_2(-q^4)
=m(1,q^3;q^4)-2m(-q^4,q^8;q^{16}).
\end{equation*}
Using (\ref{equation:mxqz-flip}) and then (\ref{equation:mxqz-fnq-z}), we have
\begin{align*}
-qB(q)+2A_2(-q^4)
&=m(1,q^3;q^4)-m(-q^4,q^8;q^{16})+q^{-4}m(-q^{-4},q^{-8};q^{16})\\
&=m(1,q^3;q^4)-m(-q^4,q^8;q^{16})+q^{-4}m(-q^{-4},q^{8};q^{16})\\
&=D_{2}(1,q^3,q^8;q^{4}).
\end{align*}
For the second identity, we recall (\ref{equation:2nd-B(q)}) and (\ref{equation:2nd-mu(q)}) to obtain
\begin{equation*}
-qB(q)-\frac{1}{2}\mu_2(q^4)
=m(1,q^3;q^4)-m(-q^4,-1;q^{16})-m(-q^4,q^{4};q^{16}).
\end{equation*}
Applying (\ref{equation:mxqz-flip}) followed by (\ref{equation:mxqz-fnq-z}) to the third summand yields
\begin{equation*}
-qB(q)-\frac{1}{2}\mu_2(q^4)
=m(1,q^3;q^4)-m(-q^4,-1;q^{16})+q^{-4}m(-q^4,q^{12};q^{16}).
\end{equation*}
Applying (\ref{equation:mxqz-fnq-newz}) followed by (\ref{equation:mxqz-fnq-z}) to the second summand yields
\begin{align*}
-qB(q)-\frac{1}{2}\mu_2(q^4)
&=m(1,q^3;q^4)-m(-q^4,q^{12};q^{16})+q^{-4}m(-q^4,q^{12};q^{16})\\
&=D_{2}(1,q^{3},q^{12};q^{4}).
\end{align*}
Identities (\ref{equation:Bid-1}) and (\ref{equation:Bid-2}) then follow from Proposition \ref{proposition:3B-ids}.  To prove (\ref{equation:Bid-3}), we add (\ref{equation:Bid-1}) and (\ref{equation:Bid-2}) to obtain
\begin{align*}
-2qB(q)-\frac{1}{2}\mu_2(q^4)+2A_2(-q^4)
&=-q\frac{\Theta_{2}\Theta_{4}^5\Theta_{16}^2}{\Theta_{1}^2\Theta_{8}^5}
-\frac{1}{2}\frac{\Theta_{2}\Theta_{4}^3\Theta_{8}}{\Theta_{1}^2\Theta_{16}^2}\\
&=-\frac{1}{2}\frac{\Theta_{2}\Theta_{4}^5}{\Theta_{1}^2\Theta_{8}^4}
\left ( 2q\cdot\frac{\Theta_{16}^2}{\Theta_{8}}
+\frac{\Theta_{8}^5}{\Theta_{4}^2\Theta_{16}^2}\right )\\
&=-\frac{1}{2}\frac{\Theta_{2}\Theta_{4}^5}{\Theta_{1}^2\Theta_{8}^4}
\left ( q\cdot \overline{\Theta}_{0,8}
+\overline{\Theta}_{4,8}\right).
\end{align*}
Using (\ref{equation:theta-split}) gives
\begin{equation*}
-2qB(q)-\frac{1}{2}\mu_2(q^4)+2A_2(-q^4)
=-\frac{1}{2}\frac{\Theta_{2}\Theta_{4}^5}{\Theta_{1}^2\Theta_{8}^4}
\cdot \Theta(-q;q^2) 
=-\frac{1}{2}\frac{\Theta_{2}^6\Theta_{4}^3}{\Theta_{1}^4\Theta_{8}^4}.\qedhere
\end{equation*}
\end{proof}

\begin{proof}[ Proof of Theorem \ref{theorem:3rho-ids}] 
We prove (\ref{equation:rho-id-1}) and (\ref{equation:rho-id-2}).  We immediately have
\begin{align*}
-q\rho(q)+2A_2(-q^6)
&=D_{2}(1,q,q^{12};q^{6}),\\
-q\rho(q)-\frac{1}{2}\mu_2(q^6)
&=D_2(1,q,q^{18};q^{6}),
\end{align*}
and then use (\ref{equation:D2-rho-id1}) and (\ref{equation:D2-rho-id2}) respectively.  To prove (\ref{equation:rho-id-3}), we add (\ref{equation:rho-id-1}) and (\ref{equation:rho-id-2}).  This gives
\begin{align*}
2q\rho(q)+\frac{1}{2}\mu_2(q^6)-2A_2(-q^6)
&=\frac{1}{2}\frac{\Theta_{2}\Theta_{3}^2\Theta_{4}^2}
{\Theta_{1}^2\Theta_{8}\Theta_{24}}+
q \frac{\Theta_{2}^2\Theta_{3}^2\Theta_{6}\Theta_{8}\Theta_{24}}
{\Theta_{1}^2\Theta_{4}\Theta_{12}^3}.
\end{align*}
Hence, we want to prove
\begin{equation*}
\frac{1}{2}\frac{\Theta_{2}\Theta_{3}^2\Theta_{4}^2}
{\Theta_{1}^2\Theta_{8}\Theta_{24}}+
q \frac{\Theta_{2}^2\Theta_{3}^2\Theta_{6}\Theta_{8}\Theta_{24}}
{\Theta_{1}^2\Theta_{4}\Theta_{12}^3}
=\frac{1}{2}\frac{\Theta_{2}^6\Theta_{3}^4}
{\Theta_{1}^4\Theta_{4}^2\Theta_{6}\Theta_{12}^2},
\end{equation*}
but this is easily seen to be equivalent to (\ref{equation:rho-id-3-thetaID}).
\end{proof}

\begin{proof}[Proof of Theorem \ref{theorem:3lambda-ids}]
We prove (\ref{equation:lambda-id-1}) and (\ref{equation:lambda-id-2}). We quickly see that
\begin{align*}
\lambda(q)+4A_2(-q^6)
&=D_2(1,-q^2,q^{12};q^{6})+D_2(1,-q,q^{12};q^{6}),\\
q\lambda(q)-\mu_2(q^6)
&=D_2(1,-q^2,-1;q^{6})+D_2(1,-q,-q^{12};q^{6}).
\end{align*}
With (\ref{equation:D2-lambda-id1}) and (\ref{equation:D2-lambda-id2}) in mind, we see that (\ref{equation:lambda-id-1}) has been reduced to showing 
\begin{equation*}
q\frac{\Theta_{1}^2\Theta_{4}\Theta_{6}^7\Theta_{8}\Theta_{24}}{\Theta_{2}^4\Theta_{3}^2\Theta_{12}^5}
+q^2\frac{\Theta_{2}\Theta_{6}^2\Theta_{24}^3}{\Theta_{8}\Theta_{12}^4}
=q\frac{\Theta_{1}\Theta_{3}\Theta_{4}^5\Theta_{6}^3\Theta_{24}^{2}}
{\Theta_{2}^4\Theta_{8}^{2}\Theta_{12}^5},
\end{equation*}
but this is just (\ref{equation:lambda-id-1-thetaID}).  For (\ref{equation:lambda-id-2}), we see that
\begin{align*}
D_2(1,-q,-q^{12};q^{6})
&=\frac{q(q^{12};q^{12})_{\infty}^3}{\Theta(-q;q^6)\Theta(-q^{12};q^{24})}\frac{1}{\Theta(q^{6};q^{12})}\\
&\qquad \cdot \Big [
\frac{\Theta(-q^{7};q^{12})\Theta(-q^{10};q^{24})}{\Theta(-q;q^{12})}
 + q^{4}\frac{\Theta(-q;q^{12})\Theta(-q^{2};q^{24})}{\Theta(-q^7;q^{12})}\Big ].
\end{align*}
With (\ref{equation:D2-lambda-id3}) in mind, we see that showing (\ref{equation:lambda-id-2}) is equivalent to proving
\begin{align*}
&-\frac{\Theta_{2}\Theta_{12}^4\Theta_{16}^2\Theta_{24}}{\Theta_{4}^2\Theta_{6}^2\Theta_{8}\Theta_{48}^2}
 + \frac{q(q^{12};q^{12})_{\infty}^3}{\Theta(-q;q^6)\Theta(-q^{12};q^{24})}\frac{1}{\Theta(q^{6};q^{12})}\\
&\qquad \cdot \Big [
\frac{\Theta(-q^{7};q^{12})\Theta(-q^{10};q^{24})}{\Theta(-q;q^{12})}
 + q^{4}\frac{\Theta(-q;q^{12})\Theta(-q^{2};q^{24})}{\Theta(-q^7;q^{12})}\Big ]
  = -\frac{\Theta_{1}\Theta_{3}\Theta_{6}\Theta_{8}^2\Theta_{12}}
{\Theta_{2}^2\Theta_{4}\Theta_{24}^2},
\end{align*}
which is equivalent to 
\begin{align*}
&-\frac{\Theta_{2}\Theta_{12}^4\Theta_{16}^2\Theta_{24}}{\Theta_{4}^2\Theta_{6}^2\Theta_{8}\Theta_{48}^2}
+q\frac{\Theta_{1}\Theta_{4}\Theta_{12}^5\Theta_{48}}{\Theta_{2}^2\Theta_{3}\Theta_{6}\Theta_{24}^3}\cdot \left ( 
\frac{\Theta_{1,12}\Theta_{20,48}}{\Theta_{5,12}\Theta_{2,24}}
+q^4\cdot 
\frac{\Theta_{5,12}\Theta_{4,48}}{\Theta_{1,12}\Theta_{10,24}}\right ) 
+\frac{\Theta_{1}\Theta_{3}\Theta_{6}\Theta_{8}^2\Theta_{12}}
{\Theta_{2}^2\Theta_{4}\Theta_{24}^2}=0,
\end{align*}
but this is just (\ref{equation:lambda-id-2-thetaID}).

We prove (\ref{equation:lambda-id-3}) by first adding  (\ref{equation:lambda-id-1}) and  (\ref{equation:lambda-id-2}) 
\begin{align*}
q\lambda(q)-\frac{1}{2}\mu_2(q^6)+2A_2(-q^6)
&=\frac{q}{2}\frac{\Theta_{1}\Theta_{3}\Theta_{4}^5\Theta_{6}^3\Theta_{24}^{2}}
{\Theta_{2}^4\Theta_{8}^{2}\Theta_{12}^5}
-\frac{1}{2}\frac{\Theta_{1}\Theta_{3}\Theta_{6}\Theta_{8}^2\Theta_{12}}
{\Theta_{2}^2\Theta_{4}\Theta_{24}^2}.
\end{align*}
Hence we need to show
\begin{equation*}
\frac{q}{2}\frac{\Theta_{1}\Theta_{3}\Theta_{4}^5\Theta_{6}^3\Theta_{24}^{2}}
{\Theta_{2}^4\Theta_{8}^{2}\Theta_{12}^5}
-\frac{1}{2}\frac{\Theta_{1}\Theta_{3}\Theta_{6}\Theta_{8}^2\Theta_{12}}
{\Theta_{2}^2\Theta_{4}\Theta_{24}^2}
=-\frac{1}{2}\frac{\Theta_{1}^2\Theta_{3}^2\Theta_{4}\Theta_{6}^2}
{\Theta_{2}^3\Theta_{12}^3},
\end{equation*}
but this is just (\ref{equation:lambda-id-3-thetaID}).
\end{proof}

\begin{proof} [Proof of Theorem \ref{theorem:3Rphi-ids}]  We prove (\ref{equation:Rphi-id-1}) and (\ref{equation:Rphi-id-2}).  We immediately obtain
\begin{align*}
-2\phi_{R}(q)+2A_{2}(-q^2)
&=D_{2}(1,q,q^4;q^2),\\
-2\phi_{R}(q)-\frac{1}{2}\mu_{2}(q^2)
&=D_{2}(1,q,q^{-2};q^2),
\end{align*}
and then use (\ref{equation:D2-Rphi-id1}) and (\ref{equation:D2-Rphi-id2}).  For identity (\ref{equation:Rphi-id-3}), we first add (\ref{equation:Rphi-id-1}) and (\ref{equation:Rphi-id-2}).  This yields
\begin{equation*}
-4\phi_{R}(q)+\frac{1}{2}\mu_{2}(q^2)+2A_{2}(-q^2)
=-2q\frac{\Theta_{2}^7\Theta_{8}^4}{\Theta_{1}^4\Theta_{4}^6}
-\frac{1}{2}\frac{\Theta_{2}^3\Theta_{4}^6}{\Theta_{1}^3\Theta_{8}^4}.
\end{equation*}
Hence we need to prove
\begin{equation*}
2q\frac{\Theta_{2}^7\Theta_{8}^4}{\Theta_{1}^4\Theta_{4}^6}
+\frac{1}{2}\frac{\Theta_{2}^3\Theta_{4}^6}{\Theta_{1}^4\Theta_{8}^4}
=\frac{1}{2}\frac{\Theta_{2}^{17}}{\Theta_{1}^8\Theta_{4}^8},
\end{equation*}
which is equivalent to (\ref{equation:Rphi-id-3-thetaID}).
\end{proof}

\begin{proof}[Proof of Theorem \ref{theorem:3Rxi-ids}]    We prove (\ref{equation:3xi-id-1}) and (\ref{equation:3xi-id-2}).  We have
\begin{align*}
\frac{1}{2}\xi_{R}(q)+2A_2(-q)
&=D_2(1,-1,q^2;q),\\
\frac{1}{2}\xi_{R}(q)-\frac{1}{2}\mu_(q)
&=D_2(1,-1,q^{-1};q),
\end{align*}
and then use (\ref{equation:D2-Rxi-id1}) and (\ref{equation:D2-Rxi-id2}).  To prove (\ref{equation:3xi-id-3}), we simply add (\ref{equation:3xi-id-1}) and (\ref{equation:3xi-id-2}).
\end{proof}

\begin{proof}[Proof of Theorem  \ref{theorem:4psi-ids}] We first prove (\ref{equation:psi-id-1}) and (\ref{equation:psi-id-2}).    It is straightforward to show
\begin{align*}
\psi(q)-U_0(q^3)
&=D_2(1,-q,-1;q^{3}),\\
\psi(q)+2U_1(q^3)
&=D_2(1,-q,-q^{6};q^3),
\end{align*}
and then we use (\ref{equation:D2-psi-id1}) and (\ref{equation:D2-psi-id2}).   To prove (\ref{equation:psiBar-id-1}) and (\ref{equation:psiBar-id-2}), we first obtain
\begin{align*}
-2\psi_{\_}(q)-U_0(q^3)
&=D_{2}(1,q,-1;q^3)-q\frac{\Theta_{6}^3}{\Theta_{1}\Theta_{2}},\\
-2\psi_{\_}(q)+2U_1(q^3)
&=D_2(1,q,-q^{6};q^{3})-q\frac{\Theta_{6}^3}{\Theta_{1}\Theta_{2}}.
\end{align*}
For (\ref{equation:psiBar-id-1}), we substitute into (\ref{equation:msplit2}) and simplify to obtain
\begin{align*}
D_{2}(1,q,-1;q^3)
&=-\frac{(q^6;q^6)_{\infty}^3}{\Theta(q;q^3)\Theta(-1;q^{12})}\frac{1}{\Theta(q^3;q^6)}\\
&\qquad \cdot \Big [
\frac{\Theta(q^4;q^6)\Theta(-q^2;q^{12})}{\Theta(q;q^6)}
+ \frac{\Theta(q;q^6)\Theta(-q^8;q^{12})}{\Theta(q^4;q^6)}\Big ].
\end{align*}
Hence we need to prove
\begin{align*}
-\frac{(q^6;q^6)_{\infty}^3}{\Theta(q;q^3)\Theta(-1;q^{12})}\frac{1}{\Theta(q^3;q^6)}
& \cdot \Big [
\frac{\Theta(q^4;q^6)\Theta(-q^2;q^{12})}{\Theta(q;q^6)}
+ \frac{\Theta(q;q^6)\Theta(-q^8;q^{12})}{\Theta(q^4;q^6)}\Big ]
  -q\frac{\Theta_{6}^3}{\Theta_{1}\Theta_{2}} \\
&=- \frac{\Theta_{2}\Theta_{4}^{2}\Theta_{6}^3}{\Theta_{1}^2\Theta_{3}\Theta_{8}\Theta_{24}},
\end{align*}
which is equivalent to 
\begin{equation*}
-\frac{1}{2}\frac{\Theta_{2}\Theta_{4}^2\Theta_{6}^3}{\Theta_{1}^2\Theta_{3}\Theta_8\Theta_{24}}
-\frac{1}{2}\frac{\Theta_{6}^6\Theta_{8}\Theta_{12}^3}{\Theta_{2}^2\Theta_{3}^3\Theta_{4}\Theta_{24}^3}
  -q\frac{\Theta_{6}^3}{\Theta_{1}\Theta_{2}} 
=- \frac{\Theta_{2}\Theta_{4}^{2}\Theta_{6}^3}{\Theta_{1}^2\Theta_{3}\Theta_{8}\Theta_{24}},
\end{equation*}
but this is just (\ref{equation:psiBar-id-1-thetaID}).  For (\ref{equation:psiBar-id-2}), we again use (\ref{equation:msplit2}) and simplify to obtain
\begin{align*}
D_2(1,q,-q^{6};q^{3})
&=-\frac{q(q^6;q^6)_{\infty}^3}{\Theta(q;q^3)\Theta(-q^6;q^{12})}\frac{1}{\Theta(q^3;q^6)}\\
&\qquad \cdot \Big [
\frac{\Theta(q^{4};q^6)\Theta(-q^{4};q^{12})}{\Theta(q;q^6)}
 +q \frac{\Theta(q;q^6)\Theta(-q^2;q^{12})}{\Theta(q^4;q^6)}\Big ].
\end{align*}
Hence we need to prove
\begin{align*}
\frac{q(q^6;q^6)_{\infty}^3}{\Theta(q;q^3)\Theta(-q^6;q^{12})}\frac{1}{\Theta(q^3;q^6)}
& \cdot \Big [
\frac{\Theta(q^{4};q^6)\Theta(-q^{4};q^{12})}{\Theta(q;q^6)}
 +q \frac{\Theta(q;q^6)\Theta(-q^2;q^{12})}{\Theta(q^4;q^6)}\Big ]
 +q\frac{\Theta_{6}^3}{\Theta_{1}\Theta_{2}}\\
 &=2q\frac{\Theta_2^2\Theta_6^4\Theta_8\Theta_{24}}{\Theta_{1}^2\Theta_{3}\Theta_{4}\Theta_{12}^3},
\end{align*}
which is equivalent to
\begin{equation*}
\frac{\Theta_{2}^2\Theta_{6}^4\Theta_{8}\Theta_{24}}
{\Theta_{1}^2\Theta_{3}\Theta_{4}\Theta_{12}^3}
+q\frac{\Theta_{4}^2\Theta_{6}^9\Theta_{24}^3}{\Theta_{2}^3\Theta_{3}^3\Theta_{8}\Theta_{12}^6}
 +\frac{\Theta_{6}^3}{\Theta_{1}\Theta_{2}}
 =2\frac{\Theta_2^2\Theta_6^4\Theta_8\Theta_{24}}{\Theta_{1}^2\Theta_{3}\Theta_{4}\Theta_{12}^3},
\end{equation*}
but this is just (\ref{equation:psiBar-id-2-thetaID}).
\end{proof}

\section*{Acknowledgements}
This work was supported by the Theoretical Physics and Mathematics Advancement Foundation BASIS, agreement No. 20-7-1-25-1, and by the Ministry of Science and Higher Education of the Russian Federation (agreement no. 075-15-2022-287).   We would like to thank Frank Garvan and Jeremy Lovejoy for helpful comments and suggestions.  We would also like to thank the referee for their careful effort in reviewing the manuscript and in asking whether or not there are more mock theta function identities explained by Theorem  \ref{theorem:msplit-general-n}.  We would also like to thank the Online Encyclopedia for Integer Sequences for guiding us to work of Cheng, Duncan, and Harvey \cite{CDH}.

\end{document}